\titleformat{\subsection}{\it}{\thesubsection.\enspace}{1.5pt}{}
\titleformat{\subsubsection}{\it}{\thesubsubsection.\enspace}{1.5pt}{}
\newtheorem{theo}{Theorem}[section]
\newtheorem{lemm}[theo]{Lemma}
\newtheorem{prop}[theo]{Proposition}
\newtheorem{rema}{Remark}[section]
\numberwithin{equation}{section}
\def\G{G_\delta}
\def\lm{\le}
\def\a{\alpha}
\def\th2{\frac{\theta}{2}}
\def\dive{\mathop{\rm div}\nolimits}
\def\R{\mathbb{R}}
\def\G{\Lambda^{-s}}
\newcommand{\bm}[1]{\mbox{\boldmath{$#1$}}}
\begin{document}
\title{Decay of Strong Solution for the Compressible Navier-Stokes Equations
        with Large Initial Data  \hspace{-4mm}}
\author{Jincheng Gao$^\dag$ \quad Zhengzhen Wei$^\ddag$ \quad Zheng-an Yao$^\sharp$ \\[10pt]
\small {School of Mathematics, Sun Yat-Sen University,}\\
\small {510275, Guangzhou, P. R. China}\\[5pt]
}

\footnotetext{Email: \it $^\dag$gaojc1998@163.com,
\it $^\ddag$weizhzh5@mail2.sysu.edu.cn,
\it $^\sharp$mcsyao@mail.sysu.edu.cn}
\date{}

\maketitle

\begin{abstract}
In this paper, we investigate the convergence of the global large solution to
its associated constant equilibrium state with an explicit decay rate
for the compressible Navier-Stokes equations in three-dimensional whole space.
Suppose the initial data belongs to some negative Sobolev space instead of Lebesgue space, we not only
prove the negative Sobolev norms of the solution being preserved along time evolution,
but also obtain the convergence of the global large solution to
its associated constant equilibrium state with algebra decay rate.
Besides, we shall show that the decay rate of the first order spatial derivative of
large solution of the full compressible Navier-Stokes equations converging to zero in
$L^2-$norm is $(1+t)^{-5/4}$, which coincides with the heat equation.
This extends the previous decay rate $(1+t)^{-3/4}$ obtained in \cite{he-huang-wang2}.
\vspace*{5pt}

\noindent{\it {\rm Keywords}}: Compressible Navier-Stokes system; optimal decay rate; large initial data; negative Sobolev space.

\vspace*{5pt}
\end{abstract}


\section{Introduction}
\quad
   In this paper, we hope to investigate the decay estimate of strong solution
   to the isentropic compressible Navier-Stokes equations and the full compressible
    Navier-Stokes equations with large initial data in the three-dimensional whole space.
    The isentropic compressible Navier-Stokes (ICNS) equations are given as:
  \begin{equation*}\tag{ICNS}\label{cns}
  	\left\{\begin{aligned}
  &\partial_t\rho+\dive(\rho u)=0,\\
   &\partial_t(\rho u)+\dive(\rho u\otimes u)-\mu\Delta u-(\mu+\lambda)\nabla\dive u+\nabla P=0,\\
 & \underset{|x|\rightarrow\infty}{\lim}(\rho-1, u)(x, t)=(0,0),
  	\end{aligned}\right.
  \end{equation*}
where $(x, t)\in \mathbb{R}^3 \times \mathbb{R}^+$.
The unknown functions $\rho, u=(u_1, u_2, u_3)$ and $P$ represent
the density, velocity and pressure, respectively.
The pressure $P$ is given by a smooth function $P=P(\rho)=A\rho^\gamma$ with the adiabatic exponent $\gamma \ge 1$, and without loss of generality, we assume $A=1$ in this paper.
The constants $\mu$ and $\lambda$ are the viscosity coefficients,
which satisfy the following conditions:
$\mu>0$, $2\mu+3\lambda \ge0$.
To complete system \eqref{cns}, the initial data is given by
\begin{equation*}
\left.(\rho, u)(x, t)\right|_{t=0}=(\rho_0(x), u_0(x)).
\end{equation*}

There are many interesting works on the isentropic compressible Navier-Stokes equations since its physical importance, let us review some results related to well-posedness theory. In the absence of vacuum, the local well-posedness for the compressible Navier-Stokes equations  was proved by Nash \cite{nash}.
In the presence of vacuum, Huang, Li and Xin \cite{huang-li-xin} established the global existence and uniqueness of strong solution to system \eqref{cns} in three-dimensional space in the condition that the initial energy is small.
Recently, Li and Xin obtained similar results for the dimension two in \cite{li-xin}, in addition, they established the large time behavior of the solutions for system \eqref{cns} with small initial data but allowing large oscillations in \cite{li-xin}.

    The large time behavior of the solutions to the isentropic compressible Navier-Stokes system has been studied extensively. Matsumura and Nishida \cite{nishida2} first obtained the optimal decay rate for strong solution to the compressible Navier-Stokes system, and Ponce \cite{ponce} obtained the optimal $L^p(p\ge 2)$ decay rate. For more information on the long time behavior of the isentropic compressible Navier-Stokes equations with or without external forces, we refer to \cite{duan1,li-zhang}  and the references therein.  All these decay results mentioned above are restricted to the perturbation framework, that is, if the initial data is a small perturbation of constant equilibrium in $L^1\cap H^3$, then $L^2$ decay of the solution to \eqref{cns} is
$$\|\rho(t)-1\|_{L^2}+\|u (t)\|_{L^2}\le C(1+t)^{-\frac34}.$$
In the perturbation setting, the approach on proving the decay for the solutions of
compressible Navier-Stokes system rely heavily on the analysis of the linearization of the system. More precisely, most of these decay results proved by combining the linear optimal decay of spectral analysis with the energy method.

From another point of view, under the assumption that the initial perturbation is bounded in $\dot H^{-s}$, Guo and Wang \cite{guo-wang} obtained the optimal decay rates of the higher-order spatial derivatives of the solution for system \eqref{cns} by using pure energy method. Specifically, they combined energy estimates with the interpolation between negative and positive Sobolev norms and obtained the time decay rates for the isentropic compressible Navier-Stokes equations and Boltzmann equation. The new method developed in \cite{guo-wang} has a wide range of applications recently, see \cite{tan-tong,tan-wang,tan-wang-tong, tan-wang-wang, tan-wu-zhou,tan-zhou,wang}. It should be noticed that all of these decay results are established under the assumption that the initial data is a small perturbation of constant equilibrium state. As far as we know, there are no decay results established by using the method coming from \cite{guo-wang} beyond the near-equilibrium regime. Recently, He, Huang and Wang \cite{he-huang-wang} established the global stability of large solution to \eqref{cns} in $\mathbb R^3$, more precisely, they obtained the decay rate
\begin{equation*}
\|\rho(t) -1\|_{H^1}+\|u(t)\|_{H^1}\le C(1+t)^{-\frac34(\frac2p-1)},
\end{equation*}
where $(\rho_0-1,u_0)\in L^p(\mathbb R^3)\cap H^2(\mathbb R^3)$ with $p\in[1,2)$. Later, we improved this decay result in \cite{gao-wei-yao}. More precisely, we not only shown that the upper decay rate of the first order spatial derivative converging to zero in $H^1-$ norm was $(1+t)^{-\frac32(\frac1p-\frac12)-\frac12}$, but also gave the lower bound of decay rate for the global solution of \eqref{cns}  converging to constant equilibrium state in $L^2-$ norm was $(1+t)^{-\frac34}$ in the case $p=1$.
\textit{The first purpose in this paper is to investigate the convergence rate of the global large solution studied in \cite{he-huang-wang} when the initial data belongs to some negative Sobolev space $\dot H^{-s}$ instead of Lebesgue space $L^1$ .}
The advantages are that the negative Sobolev norms of the solution are shown to be preserved along time evolution
and the first order derivative of the solution can obtain the optimal convergence rate.

Second, we introduce the full compressible Navier-Stokes equations, which govern the motion of the compressible viscous and heat-conductive gases. The full compressible Navier-Stokes equations are written as follows

 \begin{equation*}\tag{FCNS}\label{fullcns0}
  	\left\{\begin{aligned}
 &\partial_t\rho+\dive (\rho u) =0,\\
  &\partial_t(\rho u)+\dive (\rho u\otimes u)+\nabla P(\rho T)=\dive \mathbb S(u),\\
   &\partial_t(\rho E)+\dive(\rho Eu)+\dive(P(\rho,T)u)=\dive(\mathbb S(u)u)+\Delta T,\\
 & \underset{|x|\rightarrow\infty}{\lim}(\rho-1, u,T-1)=0,
  	\end{aligned}\right.
  \end{equation*}
  where $\rho$, $u$, $T$ represent the density, the velocity and the absolute temperature of the fluid, respectively. In this paper, we are concerned with the perfect heat conducting and viscous gases, that is, the pressure $P(\rho,T)$ is given by $P=\rho T$. $E$ represents total energy, given by $E=T+\frac{|u|^2}{2}$. $\mathbb S(u)$ is the stress tensor:
  $$\mathbb S(u)= \mu(\nabla u+(\nabla u)^{'})+\lambda\dive u \ \mathbb I_{3\times3}.$$
  Here $\mu$ and $\lambda$ are the coefficients of viscosity, which are assumed to be constants, and satisfying $\mu>0$, $2\mu+3\lambda\ge0$.

 There are extensively literatures on system \eqref{fullcns0}. In the absence of vacuum, the global well-posedness was first established by Matsumura and Nishida \cite{nishida1} in the condition that the initial data was close to constant equilibrium in $H^3$-framework.  Matsumura and Nishida \cite{nishida2} first established the $L^2$ time-decay rate of classical solutions for system \eqref{fullcns0}. For an exterior domain in $\mathbb R^3$, Kobayashi and Shibata \cite{kobayashi2} investigated the time decay rate of solutions for \eqref{fullcns0}(see \cite{kobayashi1} for further developments). For the half space in $\mathbb R^3$, the authors investigated the asymptotic behaviors of solutions for the compressible Navier-Stokes equations in \cite{kagei1} and \cite{kagei2}.
 For the full compressible Navier-Stokes equations with external force, the authors \cite{duan22}
 obtained the convergence rate in various norms for the
 solution to the  non-trivial stationary profile in the whole space, when the initial perturbation of
 the  non-trivial stationary solution and the potential force were small in some Sobolev norms.

 In the presence of vacuum, Li and his collaborator established the global well-posedness  of strong solution for system \eqref{fullcns0} in a series of papers \cite{lijinkai1,lijinkai2,lijinkai3,lijinkai4} for the one dimensional case.  In the three dimensional case, Huang and Li \cite{huang-li} established global weak solution when the initial energy is small.
 Wen and Zhu \cite{wen-zhu} proved that the strong solution exists globally in time if the initial mass is small for the fixed coefficients of viscosity and heat conduction.
  Many other global well-posedness and the large time behavior refer to
  \cite{huang-li, danchin,qin, kawashima, jiang2} and the references therein.

 However, when the initial data is far away from the equilibrium, there are few results on the well-posedness and large-time behavior. In one-dimensional bounded domains, Kazhikhov and Shelukhin \cite{kazhikhov} first obtained the global existence of solutions to system \eqref{fullcns0} ; Jiang \cite{jiang}, Li and Liang \cite{li-liang} studied long-time behavior of large solution to system \eqref{fullcns0}. Very recently, He, Huang and Wang \cite{he-huang-wang2} proved the global-in-time stability of large solution for system \eqref{fullcns0} under the assumption that
 the density is uniformly bounded in $C^\alpha$ for some small $0<\alpha<1$ in the whole space. More precisely, they got the uniform-in-time bound for the global solution by using some techniques about the blow-up criterion come from \cite{sun-wang, wen-zhu2, lijing2}, and they obtained decay rate
 \begin{equation}\label{00}
 \|u(t)\|_{H^1}+\|\rho(t)-1\|_{H^1}+\|T(t)-1\|_{H^1}\le C(1+t)^{-\frac34}.
 \end{equation}
Here the initial data $(\rho_0-1,u_0)\in L^1(\mathbb R^3)\cap H^2(\mathbb R^3)$ and $T_0-1\in L^1(\mathbb R^3)\cap H^1(\mathbb R^3)$. It is obviously that the decay rate of the first order spatial derivative of solution in \eqref{00} is not optimal.
Thus the second aim of this paper is to address the following three problems
for the global solution investigated in \cite{he-huang-wang2}:
\textit{(i)obtain optimal decay rate(at least faster than $(1+t)^{-\frac{3}{4}}$) for the first order
 spatial derivative of global solution;
(ii)prove that the second order spatial derivative of global solution will converge to zero;
(iii)establish the decay rate for the global solution when the initial perturbation
   belongs to some negative Sobolev space $\dot H^{-s}$ instead of Lebesgue space $L^1$.}

\textbf{Notation:} In this paper, we use $H^s(s\in\mathbb R)$ to denote the usual Sobolev space with norm $\|\cdot\|_{H^s}$ and $L^p(\mathbb{R}^3)$ to denote the usual $L^p$ space with norm $\|\cdot\|_{L^p}$. $\mathcal{F}(f):=\widehat{f}$ represents the usual Fourier transform of the function $f$.
The operator $\Lambda^s, s\in\mathbb R$ defined as  $$\Lambda^s f(x)=\int_{\mathbb R^3}|\xi|^s\hat f(\xi)e^{2\pi ix \cdot \xi}d \xi.$$
Then the definition of the Sobolev space is given as
 $$\|f\|_{\dot H^s}:=\|\Lambda^s f\|_{L^2}=\||\xi|^s\hat f\|_{L^2}.$$
 When the index $s$ is non-positive, we called the space as negative Sobolev space. For convenience, we will change the index to be $``-s"$ with $s\ge0$. For the sake of simplicity, we write $\int f dx:=\int _{\mathbb{R}^3} f dx$
 and $\|(A, B)\|_X:=\| A \|_X+\| B\|_X$. We will use the notation $a\sim  b$ whenever $a\le C_1b$ and $b\le C_2a$ where $C_1$ and $C_2$ are universal constants. We will employ the notation $a\lesssim b$ to mean that $a\le Cb$ for a universal constant $C>0$ that only depends on the parameters coming from the problem but independent of time, and may change from line to line.

This paper is organized as follows. In Section \ref{2}, we will state the main results of this paper and comment the analysis of the proofs. In Section \ref{3}, we will prove the main theorem of the isentropic compressible Navier-Stokes equations. In Section \ref{4}, we will prove the main two theorems of the full compressible Navier-Stokes equations. In Appendix \ref{appendix}, we present some useful inequalities, which are extensively used in this paper.

\section{Main results}\label{2}
\subsection{Main results for the isentropic compressible Navier-Stokes equations.}
\quad
In this subsection, we are concerned with the decay rate
for large solution of the system \eqref{cns}. First of all, we recall the following results obtained in \cite{he-huang-wang}, which will be used in this paper frequently.
 \begin{theo}(see \cite{he-huang-wang})\label{Thm1}
 Let $\mu>\frac12\lambda$, and $(\rho,u)$ be a global and smooth solution of \eqref{cns} with initial data $(\rho_0,u_0)$ where $\rho_0\ge c>0$. Suppose the admissible condition holds:
 $$ \partial_tu\mid_{t=0}=-u_0\cdot\nabla u_0+\frac{1}{\rho_0}Lu_0-\frac{1}{\rho_0}\nabla \rho_0^\gamma,$$
 where operator $L$ is defined by $Lu=-\dive(\mu\nabla u)-\nabla((\lambda+\mu)\dive u)$. Assume that $a:=\rho-1$, and $\sup_{t\ge0}\|\rho(t)\|_{C^\a}\lm M$ for small $0<\a<1$. Then if $a_0,u_0 \in L^{p}(\R^3)\cap H^2(\R^3)$ with $p\in[1,2]$, we have\\
(1)\textbf{(Lower bound of the density)}\\
 \quad There exists a positive constant $\underline{\rho}=\underline{\rho}(c,M)$
 such that for all $t\ge0$
 \begin{equation}\label{Lower}
 \rho(t)\ge\underline{\rho}.
  \end{equation}
(2)\textbf{(Uniform-in-time bounds for the regularity of the solution)}
 \begin{equation}\label{uniform-bound}
 \|a\|^2_{L^\infty(H^2)}+\|u\|^2_{L^\infty(H^2)}+\int_0^\infty(\|\nabla a\|^2_{H^1}+\|\nabla u\|^2_{H^2})d\tau\lm C(\underline{\rho},M,\|a_0\|_{H^2},\|u_0\|_{H^2}).
 \end{equation}
(3)\textbf{(Decay estimate for the solution)}
 \begin{equation}\label{decay}
 \|u(t)\|_{H^1}+\|a(t)\|_{H^1}\lm
 C(M,\|a_0\|_{L^{p}\cap H^1},\|u_0\|_{L^{p}\cap H^2})(1+t)^{-\beta(p)},
 \end{equation}
  where $\beta(p)=\frac34(\frac{2}{p}-1)$.
 \end{theo}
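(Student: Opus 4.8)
The plan is to take the uniform H\"older bound $\sup_{t\ge0}\|\rho(t)\|_{C^\alpha}\le M$ together with the positive lower bound of the initial density as the master a priori hypotheses, and to bootstrap from them first to the density bound in part (1), then to the uniform $H^2$ regularity in part (2), and finally to the algebraic decay in part (3). The role of the H\"older bound is that it keeps the variable coefficients $\rho$ and $1/\rho$ comparable to constants, so that the energy machinery usually confined to the near-equilibrium regime can be pushed through for large data.

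For part (1) I would work with the continuity equation along particle trajectories, $\frac{d}{dt}\log\rho=-\dive u$, and rewrite $\dive u$ through the effective viscous flux $F:=(2\mu+\lambda)\dive u-(\rho^\gamma-1)$, which satisfies the elliptic identity $\Delta F=\dive(\rho\dot u)$ with $\dot u:=\partial_t u+u\cdot\nabla u$. Since $\dive u=\frac{1}{2\mu+\lambda}\bigl(F+\rho^\gamma-1\bigr)$, the density obeys $\frac{d}{dt}\log\rho+\frac{\rho^\gamma-1}{2\mu+\lambda}=-\frac{F}{2\mu+\lambda}$ along trajectories; the term $\rho^\gamma-1$ carries the favourable sign whenever $\rho$ is small, so controlling $\int_0^t\|F\|_{L^\infty}\,d\tau$ by elliptic estimates (using the $C^\alpha$ bound to absorb the coefficient $\rho$ in $\dive(\rho\dot u)$) and invoking a Zlotnik-type comparison argument yields the uniform lower bound $\rho\ge\underline\rho(c,M)$.

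For part (2) the scheme is the standard hierarchy of energy estimates, but carried out with coefficients that are merely bounded above and below rather than close to constants. I would begin with the basic energy law (testing the momentum equation by $u$) to get time-integrability of $\|\nabla u\|_{L^2}^2$, then estimate the material derivative by testing with $\dot u$ to bound $\|\nabla u\|_{L^\infty_tL^2}$ and $\int_0^\infty\|\sqrt\rho\,\dot u\|_{L^2}^2\,d\tau$, and finally close the second-order bounds $\|a\|_{H^2}+\|u\|_{H^2}$ together with the integrability of $\|\nabla a\|_{H^1}^2+\|\nabla u\|_{H^2}^2$. At each level the H\"older bound on $\rho$ is what lets one handle the pressure term $\nabla\rho^\gamma=\gamma\rho^{\gamma-1}\nabla\rho$ and the commutators generated by the variable coefficient $1/\rho$; the admissible condition on $\partial_tu|_{t=0}$ guarantees that $\dot u|_{t=0}$ is controlled, so that the material-derivative estimates start from valid initial data.

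The genuinely hard part is part (3): because the data is large one cannot linearize globally, so the decay must be extracted from the interaction between the time-integrable dissipation produced in part (2) and the smoothing of the low frequencies. I would rewrite the system in perturbation form for $(a,u)$, isolate the linearized compressible Navier--Stokes operator about $(1,0)$, and apply a Fourier-splitting (Schonbek-type) argument to the $H^1$ energy functional: splitting frequencies at $|\xi|^2\sim (1+t)^{-1}$, the high-frequency part is absorbed by the dissipation while the low-frequency part is governed by the linear semigroup and by the $L^p$ norm of the data, producing the rate $(1+t)^{-\frac34(\frac2p-1)}$. The main obstacle is keeping the nonlinear and variable-coefficient contributions subordinate to the linear decay \emph{uniformly} in time; this is precisely where the uniform $H^2$ bounds and the integrability of $\|\nabla u\|_{H^2}^2$ from part (2) are indispensable, since they let the quadratic terms be closed by the already-established dissipation rather than by smallness of the data.
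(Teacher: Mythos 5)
First, a point of order: Theorem \ref{Thm1} is not proved in this paper at all; it is imported verbatim from \cite{he-huang-wang} and used as a black box (the lower bound \eqref{Lower}, the uniform bound \eqref{uniform-bound} and the decay \eqref{decay} are the inputs to Section \ref{3}). So there is no internal proof to compare your sketch against, and I can only measure it against what is actually needed. Your outline assembles plausible ingredients --- the effective viscous flux $F=(2\mu+\lambda)\dive u-(\rho^\gamma-1)$ with $\Delta F=\dive(\rho\dot u)$ and a Zlotnik-type comparison for the lower bound, an energy hierarchy at the levels $u$, $\dot u$, $\nabla^2(a,u)$ for the uniform regularity --- but the proposed order ``(1), then (2), then (3)'' hides a genuine circularity. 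To run the comparison argument for $\log\rho$ you need $\int_{t_1}^{t_2}\|F\|_{L^\infty}\,d\tau$ under control, and the elliptic estimate for $F$ in $W^{1,6}\hookrightarrow L^\infty$ requires bounds on $\rho\dot u$ in $L^6$, hence on $\nabla\dot u$, which live at the top of the energy hierarchy of part (2); conversely, those estimates use $1/\rho$ and therefore the lower bound you are trying to prove. Parts (1) and (2) must be closed simultaneously by a continuity/bootstrap argument on a maximal time interval; as written your plan does not account for this.

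The more serious gap is in part (3), which you yourself identify as the hard step but do not actually resolve. Fourier splitting at $|\xi|^2\sim R/(1+t)$ reduces the problem to estimating the low-frequency mass $\int_{|\xi|^2\le R/(1+t)}(|\hat a|^2+|\hat u|^2)\,d\xi$, and for the nonlinear system this must go through Duhamel's formula for the linearized compressible Navier--Stokes semigroup, whose low-frequency behaviour is not purely parabolic (it has a wave-type component), the rate $(1+t)^{-\frac34(\frac2p-1)}$ coming from $L^p\to L^2$ estimates of the Green matrix. The free part is indeed controlled by $\|(a_0,u_0)\|_{L^p}$, but the Duhamel integral involves the $L^1$ (or $L^p$) norm of the quadratic nonlinearities at time $\tau$; the uniform $H^2$ bound from part (2) makes these merely \emph{bounded} in time, and boundedness alone gives no decay from $\int_0^t(1+t-\tau)^{-\beta(p)}\,d\tau$ when $\beta(p)\le 1$. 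Closing the estimate for large data therefore requires a self-improving iteration (establish a weak rate, feed it back into the Duhamel term, and upgrade), which is precisely the technical heart of \cite{he-huang-wang}. Saying that the nonlinear contributions are ``kept subordinate to the linear decay'' by the time-integrable dissipation names the difficulty without supplying the mechanism; as it stands, part (3) is a plan rather than a proof.
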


Then our first main result is stated in the following theorem:

 \begin{theo}\label{Thm2}
 Suppose all the conditions in Theorem \ref{Thm1} hold on, except for $a_0:=\rho_0-1\in \dot H^{-s}$, and $u_0\in \dot H^{-s}$ with $s\in(0,3/2)$. Let $(\rho, u)$ be the global solution of \eqref{cns},  then it hold on for all $t\ge 0$,
 \begin{equation*}
 \|\G a(t)\|^2_{L^2}+\|\G u(t)\|^2_{L^2}+\|\G\dot u(t)\|_{L^2}^2\le C,
 \end{equation*}
 and
 \begin{equation*}
 \begin{aligned}
 \|a(t)\|^2_{H^1}+\|u(t)\|^2_{H^1}+\|\dot u(t)\|_{L^2}^2\le C(1+t)^{-s}.
 \end{aligned}
 \end{equation*}
 For all $t\ge T_1$, there holds
 \begin{equation*}
 \|\nabla a(t)\|_{H^1}^2+\|\nabla u(t)\|_{H^1}^2\le C(1+t)^{-(1+s)},
 \end{equation*}
 where $T_1$ is constant given in Lemma \ref{diss}, and $C$ is a constant independent of time.
 \end{theo}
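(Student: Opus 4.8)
The plan is to follow the energy method in negative Sobolev spaces developed in \cite{guo-wang}, adapted to the large-data regime by leaning on the density lower bound \eqref{Lower} and the uniform-in-time bound \eqref{uniform-bound} of Theorem \ref{Thm1} in place of any smallness. Since here we only assume $a_0,u_0\in\dot H^{-s}$ rather than $L^p$, the decay \eqref{decay} is no longer available and must be recovered by hand; what I would retain from Theorem \ref{Thm1} is \eqref{Lower}, \eqref{uniform-bound}, and in particular the finite dissipation integral $\int_0^\infty(\|\nabla a\|^2_{H^1}+\|\nabla u\|^2_{H^2})\,d\tau<\infty$. I would write the system in perturbation form for $a=\rho-1$,
\begin{equation*}
\partial_t a+\dive u=-\dive(au),\qquad \partial_t u-\mu\Delta u-(\mu+\lambda)\nabla\dive u+P'(1)\nabla a=\mathcal N,
\end{equation*}
where $\mathcal N$ collects the convection $-u\cdot\nabla u$, the factor $(\tfrac1\rho-1)$ multiplying the viscous and pressure terms, and the genuinely nonlinear part of $\nabla P(\rho)$; the bound \eqref{Lower} guarantees $\tfrac1\rho$ is bounded, so these are honest quadratic (or higher) terms.

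First I would establish the negative-norm estimates. Applying $\Lambda^{-s}$ to the two equations and pairing with $P'(1)\Lambda^{-s}a$ and $\Lambda^{-s}u$, the linear pressure–density coupling cancels and one is left with
\begin{equation*}
\frac12\frac{d}{dt}\big(P'(1)\|\Lambda^{-s}a\|^2_{L^2}+\|\Lambda^{-s}u\|^2_{L^2}\big)+\mu\|\Lambda^{-s}\nabla u\|^2_{L^2}+(\mu+\lambda)\|\Lambda^{-s}\dive u\|^2_{L^2}=(\text{nonlinear}),
\end{equation*}
so there is velocity dissipation but \emph{no} dissipation for the density in the negative norm. The nonlinear terms I would control with the Hardy--Littlewood--Sobolev bound $\|\Lambda^{-s}(fg)\|_{L^2}\lesssim\|fg\|_{L^{1/(1/2+s/3)}}$ followed by H\"older and the embedding $\dot H^{3/2-s}\hookrightarrow L^{3/s}$, throwing the low-regularity factor onto the bounded $H^2$-norm from \eqref{uniform-bound} and keeping a factor of $\|\nabla(a,u)\|_{H^1}$. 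For the material derivative I would differentiate the momentum equation along the flow to obtain the evolution equation for $\dot u$ (schematically $\rho\ddot u-\mu\Delta\dot u-(\mu+\lambda)\nabla\dive\dot u=$ commutators in $\nabla u,\nabla^2u,\nabla a$), apply $\Lambda^{-s}$, and pair with $\Lambda^{-s}\dot u$, again using \eqref{Lower}--\eqref{uniform-bound} to treat the coefficient $\rho$ and the commutators.

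Next I would record the positive-norm energy--dissipation inequalities, which is where Lemma \ref{diss} enters: because the dissipation integral converges, $\|\nabla u\|_{H^2}$ and $\|\nabla a\|_{H^1}$ become small for $t\ge T_1$, which is exactly the smallness needed to absorb the large-data nonlinear terms and obtain, for $t\ge T_1$, inequalities $\frac{d}{dt}\mathcal E_0+c\mathcal D_0\le0$ and $\frac{d}{dt}\mathcal E_1+c\mathcal D_1\le0$, with $\mathcal E_0\sim\|(a,u)\|^2_{H^1}+\|\dot u\|^2_{L^2}$, $\mathcal D_0\gtrsim\|\nabla(a,u)\|^2_{L^2}$, and $\mathcal E_1\sim\|\nabla(a,u)\|^2_{H^1}+\|\nabla\dot u\|^2_{L^2}$, $\mathcal D_1\gtrsim\|\nabla^2(a,u)\|^2_{L^2}$ (the $\dot u$ contributions being reabsorbed through $\rho\dot u\sim\Delta u+\nabla a$). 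The decay then follows from the interpolation $\|\nabla^\ell f\|_{L^2}\le C\|\nabla^{\ell+1}f\|^{1-\theta}_{L^2}\|\Lambda^{-s}f\|^{\theta}_{L^2}$ with $\theta=1/(\ell+1+s)$: using the uniform negative bound to kill the $\Lambda^{-s}$ factor converts the two energy inequalities into the autonomous differential inequalities $\frac{d}{dt}\mathcal E_0+c\,\mathcal E_0^{1+1/s}\le0$ and $\frac{d}{dt}\mathcal E_1+c\,\mathcal E_1^{1+1/(1+s)}\le0$, which integrate to $\mathcal E_0\lesssim(1+t)^{-s}$ and $\mathcal E_1\lesssim(1+t)^{-(1+s)}$, the latter valid for $t\ge T_1$.

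The main obstacle, I expect, is the uniform-in-time boundedness of $\|\Lambda^{-s}(a,u,\dot u)\|_{L^2}$. The crude estimate of the nonlinear terms only yields $\frac{d}{dt}\|\Lambda^{-s}(a,u)\|_{L^2}\lesssim\|\nabla(a,u)\|_{H^1}$, and since \eqref{uniform-bound} controls this factor only in $L^2_t$ (not $L^1_t$), a direct integration would permit growth like $t^{1/2}$. To close the bound I would run a bootstrap: the interpolation above, fed even a slowly growing negative norm, already produces \emph{some} algebraic decay of $\|(a,u)\|_{H^1}$ and of $\|\nabla(a,u)\|_{H^1}$; splitting each nonlinear term so that both factors carry this decay makes the right-hand side integrable in time, which upgrades the negative norm to a uniform bound and then, re-inserted into the interpolation, yields the stated optimal rates. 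Making this loop consistent across the whole range $s\in(0,3/2)$ — in particular choosing the H\"older exponents so that enough decay is extracted for small $s$ — is the delicate point, and is where the restriction $s<3/2$ (the borderline $L^1$, Hardy--Littlewood--Sobolev endpoint) originates.
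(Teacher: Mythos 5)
Your overall strategy (negative Sobolev energy identity, Hardy--Littlewood--Sobolev plus interpolation for the nonlinear terms, then the Lyapunov ODEs $\frac{d}{dt}\mathcal E_k+c\,\mathcal E_k^{1+1/(k+s)}\le 0$) is the same as the paper's, but two steps in your plan have genuine gaps. First, your justification of the smallness needed to close the positive-norm energy inequalities is invalid: convergence of $\int_0^\infty(\|\nabla a\|^2_{H^1}+\|\nabla u\|^2_{H^2})\,d\tau$ does \emph{not} imply that $\|\nabla u\|_{H^2}$ and $\|\nabla a\|_{H^1}$ are small for all $t\ge T_1$ (an $L^1_t$ function need not tend to zero pointwise). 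The paper avoids this by ordering the argument differently: the low-order dissipation inequality (Proposition \ref{pro1}, imported from \cite{he-huang-wang}) holds unconditionally for all $t\ge0$ with no smallness hypothesis, and combined with the negative-norm bound it yields $\|(a,u)\|^2_{H^1}+\|\dot u\|^2_{L^2}\lesssim(1+t)^{-s}$ first; only \emph{then} is this quantitative decay used (in Lemma \ref{diss}) to make the prefactors of the higher-order energy inequality pointwise small for $t\ge T_1$. In your version both energy inequalities are conditional on a smallness you have not actually established, and the low-order one must not be, or the argument is circular.

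Second, your treatment of the negative-norm nonlinear terms is lossier than necessary and this creates the "main obstacle" you then try to bootstrap around. If you put one factor of the product onto the uniform $H^2$ bound and keep only a single power of $\|\nabla(a,u)\|_{H^1}$, you indeed face $t^{1/2}$ growth. But the nonlinearities are genuinely quadratic: estimating, e.g., $\|\Lambda^{-s}\dive(au)\|_{L^2}\lesssim\|a\|_{L^{3/s}}\|\nabla u\|_{L^2}+\|u\|_{L^{3/s}}\|\nabla a\|_{L^2}$ and interpolating $\|\cdot\|_{L^{3/s}}$ between $\|\nabla\cdot\|_{L^2}$ and $\|\nabla^2\cdot\|_{L^2}$ (possible precisely when $s\le1/2$, so that $3/s\ge6$) gives a right-hand side of the form $\|\nabla(a,u)\|^2_{H^1}\cdot\big(\|\Lambda^{-s}(a,\rho u)\|_{L^2}+\|\nabla\Lambda^{-s}u\|_{L^2}\big)$ as in \eqref{est11}, whose time integral is finite by \eqref{uniform-bound}; the uniform bound on $\|\Lambda^{-s}(a,\rho u)\|_{L^2}$ then closes directly, with no bootstrap and no growing norm. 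For $s\in(1/2,3/2)$ the interpolation $\|f\|_{L^{3/s}}\lesssim\|f\|_{L^2}^{s-1/2}\|\nabla f\|_{L^2}^{3/2-s}$ forces undifferentiated factors into the estimate (\eqref{est12}), and integrability genuinely requires decay; the paper's missing idea that you should adopt is to note $\dot H^{-s}\cap L^2\subset\dot H^{-1/2}$, run the entire $s=1/2$ argument first to get $\|(a,u)\|_{L^2}\lesssim(1+t)^{-1/4}$ and $\|\nabla(a,u)\|_{H^1}\lesssim(1+t)^{-3/4}$ for $t\ge T_1$, and substitute these rates into \eqref{est12}. Your proposed bootstrap starting from a possibly growing negative norm is not shown to close, and as you note yourself it is exactly the delicate point. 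Finally, for $\|\Lambda^{-s}\dot u\|_{L^2}$ you do not need an evolution equation for $\dot u$ (whose commutators with $\Lambda^{-s}$ would be painful for large data): the paper simply reads $\rho\dot u=\mu\Delta u+(\lambda+\mu)\nabla\dive u-\nabla(\rho^\gamma-1)$ off the momentum equation and uses $u\in\dot H^{-s}\cap H^2\Rightarrow\Delta u\in\dot H^{-s}$ and $a\in\dot H^{-s}\cap H^2\Rightarrow\nabla(\rho^\gamma-1)\in\dot H^{-s}$ by interpolation, together with the bounds $\underline\rho\le\rho\le M$.
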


\begin{rema}
The reason for the constraint $s<3/2$ comes from applying Lemma \ref{l2} to estimate the nonlinear terms when doing the negative Sobolev estimate via $\G$. For $s\ge 3/2$, the nonlinear estimates would not work.
\end{rema}

\begin{rema}
Compared with the usual $L^p-L^2(p\in[1, 2))$ approach of investing the optimal decay of the solution, an important feature is that the $\dot H^{-s}$ norm of the solution is preserved along time evolution, but it is difficult to prove that the $L^p$ norm of the solution can be preserved along time evolution. It should be noticed that both $\dot H^{-s}$ and $L^p$ norms enhance the decay rate of the solution. When $s\in (0,3/2)$, there is the embedding relation
$\dot H^{-s}\hookrightarrow L^p, \quad p\in(1,2).$
\end{rema}

The proof of Theorem \ref{Thm2} will be given in Section \ref{3}, we now make some comments on the analysis of the proof. The main idea of the proof is combining the energy estimates with the interpolation between negative and positive Sobolev norms as Lemma \ref{l1}. Thus it is important to prove the $\dot H^{-s}$ norm of the solution is preserved along time evolution. When deriving the negative Sobolev estimates, we use different interpolation inequalities to estimate the nonlinear terms in the case $s\in(0,1/2]$ and $s\in(1/2,3/2)$ respectively. Thus we need to split the proof of Theorem \ref{Thm2} into two parts. When $s\in(0,1/2]$, we can check that the quantity $(\G a, \G(\rho u))$ is uniform bounded respect to time
since $\int_0^t(\|\nabla u\|_{H^1}^2+\|\nabla a\|_{H^1}^2)d\tau$ is uniform bounded respect to time.
Therefore, we can establish the decay estimate
$$\|a(t)\|_{H^1}^2+\|u(t)\|_{H^1}^2+\|\dot u(t)\|_{L^2}^2\le C (1+t)^{-s}, \quad s\in(0,1/2].$$
When $s\in(1/2,3/2)$, in order to derive the propagation of the negative Sobolev norms of the solution,
it is necessary to check that
$\int_0^t\|(a,u)\|_{L^2}^{s-1/2}\|\nabla(a,u)\|_{L^2}^{5/2-s}d\tau$
is uniform bounded respect to time. To achieve that, we need to improve the decay rate for the first order derivative of solution when $s\in(0,1/2]$. According to Lemma \ref{diss}, it is easy to derive that there exists a $T_1>0$, there holds
$$\|\nabla a (t)\|_{L^2}^2+\|\nabla u(t)\|_{L^2}^2
\le C(1+t)^{-(1+s)}, \quad t\ge T_1.$$
Once this decay rate are obtained, the propagation of the negative Sobolev norms of the solution in the case $s\in(1/2,3/2)$ follows by the estimates in Section \ref{3} and the uniform bound \eqref{uniform-bound}.

\subsection{Main results for the full compressible Navier-Stokes equations.}
\quad In this subsection, we state the main results of the full compressible Navier-Stokes equations \eqref{fullcns0}.
   Before  stating our main results, we need to introduce the following results obtained in \cite{he-huang-wang2}, which will be used frequently.

   \begin{theo}(see \cite{he-huang-wang2})\label{0them}
   Let $\mu>\frac12\lambda$, and $(\rho,u,T)$ be a global and smooth solution of \eqref{fullcns0} with initial data $(\rho_0,u_0,T_0)$ where $\rho_0\ge c>0$, $T_0\ge c>0$, and satisfying the admissible conditions:
   \begin{equation}\label{admissible}
   \begin{aligned}
  & u_t|_{t=0}=-u_0\cdot\nabla u_0+\frac{1}{\rho_0}\dive(\mathbb S(u_0))-\frac{1}{\rho_0}\nabla(\rho_0 T_0),\\&
   T_t|_{t=0}=-u_0\cdot \nabla T_0-T_0\dive u_0+\frac{1}{\rho_0}\mathbb S(u_0):\nabla u_0+\frac{1}{\rho_0}\Delta T_0.
    \end{aligned}
   \end{equation}
   Assume that $(\rho, T)$ satisfies that
   \begin{equation}\label{thetarho}
   \|\rho,T\|_{L^{\infty}}\le M_1,\quad \|\rho\|_{C^{\alpha}}\le M_2,
   \end{equation}
   where $\alpha$ is a positive and sufficiently small constant. Denote that $a\overset{def}{=}\rho-1$, $\theta\overset{def}{=}T-1$, then if $(a_0,u_0)\in L^1(\mathbb R^3)\cap H^2(\mathbb R^3)$ and $\theta_0\in L^1(\mathbb R^3)\cap H^1(\mathbb R^3)$, we have \\
(1)   \textbf{(Propagation of the lower bounds of the density and the temperature)}\\
   \quad There exist two constants $c_1$ and $c_2$ depending on $\alpha$ and $M$ such that
   $$\rho(t,x)\ge c_1>0,\quad T(t,x)\ge c_2>0.$$
  (2)\textbf{(Uniform-in-time bounds for the regularity)}
  \begin{equation}\label{uniform}
  \|a,u\|^2_{L^\infty_tH^2}+\|\theta\|^2_{L^\infty_tH^1}+\int_0^\infty(\|(a,\nabla u)\|_{H^2}^2+\|\nabla\theta\|^2_{H^1})d\tau\le C(M,\|a_0\|_{L^1\cap H^2},\|u_0\|_{L^1\cap H^2},\|\theta_0\|_{L^1\cap H^1}),
  \end{equation}
 (3)\textbf{(Long time behavior of the solution)}\\
 \begin{equation}\label{decay22}
 \|u(t)\|_{H^1}+\|a(t)\|_{H^1}+\|\theta(t)\|_{H^1}\le C(M,\|a_0\|_{L^1\cap H^2},\|u_0\|_{L^1\cap H^2},\|\theta_0\|_{L^1\cap H^1})(1+t)^{-\frac34},
 \end{equation}
 where $M=\max\{M_1,M_2\}$.
   \end{theo}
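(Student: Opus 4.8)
The plan is to establish parts (1)--(3) in the order stated, using the uniform $L^\infty$ and $C^\alpha$ control in \eqref{thetarho} as the substitute for the smallness that is unavailable in the large-data regime, and treating the material derivative $\dot u:=u_t+u\cdot\nabla u$ (together with its temperature analogue) as the central quantity in the higher-order estimates, in the spirit of Hoff's framework. Throughout, the guiding principle is that every nonlinear product must be absorbed using only the a priori $L^\infty$, $C^\alpha$ bounds and the lower bounds from part (1), so that all constants depend on $M$, $c_1$, $c_2$ but \emph{not} on time.

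For part (1), I would obtain the lower bound on $\rho$ from the Lagrangian structure of the continuity equation. Writing $\partial_t\rho+u\cdot\nabla\rho=-\rho\,\dive u$ and integrating along the particle trajectory $X(t,x)$ gives $\rho(t,X)=\rho_0\exp\!\big(-\int_0^t\dive u\,d\tau\big)$, so a lower bound reduces to an upper bound on $\int_0^t\dive u\,d\tau$ along trajectories. The latter is extracted from the effective viscous flux $F:=(2\mu+\lambda)\dive u-(P-1)$, whose gradient is controlled elliptically by $\rho\dot u$ via $\Delta F=\dive(\rho\dot u)$; combined with the $C^\alpha$ bound on $\rho$ and the $L^\infty$ bounds this controls $\dive u$ suitably in $L^1_tL^\infty_x$ along trajectories and yields $\rho\ge c_1>0$. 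For the temperature, the third equation is a parabolic equation for $T$ with advection and frictional forcing $\mathbb S(u):\nabla u\ge0$ of a good sign; a maximum-principle / De Giorgi--Moser argument together with the already-known $L^\infty$ bound produces $T\ge c_2>0$.

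For part (2), which I expect to be the main obstacle, I would run a hierarchy of energy estimates closed by \eqref{thetarho} rather than by smallness. The zeroth level is the basic energy/entropy identity, giving $\int_0^\infty(\|\nabla u\|_{L^2}^2+\|\nabla\theta\|_{L^2}^2)\,d\tau$ bounded. The first level is the Hoff-type estimate: multiply the momentum equation by $\dot u$ and the temperature equation by $\theta_t$, integrate by parts, and use the elliptic bounds $\|\nabla F\|_{L^2}+\|\nabla\omega\|_{L^2}\lesssim\|\rho\dot u\|_{L^2}$ for the effective flux and vorticity $\omega=\nabla\times u$ to control $\frac{d}{dt}\|\nabla u\|_{L^2}^2+\int\rho|\dot u|^2$ and the analogous temperature quantities; here the $C^\alpha$ bound on $\rho$ is precisely what lets one estimate the $\|\nabla u\|_{L^\infty}$-type terms and the pressure $P=\rho T$ without any smallness. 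Iterating to the second level (applying $\nabla$ and repeating the argument) upgrades this to $H^2$ bounds on $a,u$ and $H^1$ on $\theta$, and integrating in time against the dissipation produces the uniform-in-time bound \eqref{uniform}. The delicate point is that the coupling through the pressure $P=\rho T$ and the frictional heating $\mathbb S(u):\nabla u$ generates the worst nonlinear terms, which must be closed solely through the a priori bounds and part (1).

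For part (3), once $\rho$ and $T$ are bounded above and below and the $H^2\times H^2\times H^1$ norm is uniformly controlled, the system is a uniformly parabolic--hyperbolic perturbation of the constant state $(1,0,1)$. I would linearize, write the perturbation $(a,u,\theta)$ by Duhamel's formula against the linear semigroup $e^{t\mathcal L}$ of the compressible Navier--Stokes--Fourier operator, and invoke the classical $L^1$--$L^2$ decay $\|e^{t\mathcal L}U_0\|_{L^2}\lesssim(1+t)^{-3/4}\|U_0\|_{L^1}$ together with high-frequency exponential decay. The nonlinear Duhamel integral is then closed by a bootstrap: assuming the $(1+t)^{-3/4}$ rate on a time interval, the quadratic nonlinearities (estimated in $L^1\cap L^2$ using the uniform $H^2$ bound from part (2)) feed back a rate no worse than $(1+t)^{-3/4}$, which yields \eqref{decay22} for $u$, $a$ and $\theta$. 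The $L^1$ hypothesis on the data supplies exactly the $3/4$ exponent, and the difficulty here is that the nonlinearity is not small, so the bootstrap must lean on the uniform high-order bounds to prevent the Duhamel terms from destroying the linear rate.
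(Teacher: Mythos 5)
First, a point of order: the paper you are working from does not prove Theorem \ref{0them} at all — it is quoted verbatim from \cite{he-huang-wang2} and used as a black box. So there is no in-paper proof to compare against; the relevant comparison is with the strategy of the cited work, which (as the introduction of this paper indicates) obtains the uniform-in-time bounds by adapting blow-up-criterion techniques from \cite{sun-wang, wen-zhu2, lijing2} under the a priori $L^\infty$/$C^\alpha$ hypotheses, and then derives the $(1+t)^{-3/4}$ rate from the $L^1$ assumption. Your overall architecture (effective viscous flux and Hoff-type $\dot u$ estimates for part (2), $L^1$-driven decay for part (3)) is in the same family, but three of your steps have genuine gaps.

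For part (1), your temperature argument does not close as stated. The temperature equation in nonconservative form is $\rho\dot T+\rho T\dive u=\mathbb S(u):\nabla u+\Delta T$, and while the frictional term is indeed nonnegative, the compression term $-\rho T\dive u$ has no sign. A minimum-principle argument therefore gives only $\dot T\ge -T\dive u$ at a spatial minimum, so positivity of $T$ propagates through a factor $\exp\bigl(-\int_0^t\|\dive u\|_{L^\infty}\,d\tau\bigr)$ — exactly the same quantity your density argument needs. Both lower bounds in part (1) thus hinge on an $L^1_tL^\infty_x$ bound for $\nabla u$, which is itself one of the deepest consequences of the $C^\alpha$ hypothesis and of the flux estimate $\Delta F=\dive(\rho\dot u)$; calling it a "maximum principle together with the $L^\infty$ bound" skips the actual content. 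For part (2), the deeper structural issue is that Hoff-type estimates closed only through $M$, $c_1$, $c_2$ produce Gr\"onwall constants that grow in time; time-uniformity of \eqref{uniform} requires feeding decay of the lower-order norms back into the energy hierarchy, so parts (2) and (3) cannot be established serially, as your plan assumes — they must be bootstrapped together.

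For part (3), the Duhamel-plus-linear-semigroup bootstrap you propose is the small-data mechanism, and it has a concrete failure mode here beyond the absence of smallness: the linearized compressible Navier--Stokes--Fourier semigroup has no parabolic smoothing in the density component (the density obeys a transport equation), so closing the $H^1$ decay of $a$ through the Duhamel integral loses a derivative on the nonlinearity, which the uniform $H^2$ bound alone does not repair at the stated rate. The route actually used in the large-data literature — and, tellingly, the one this very paper uses for its own Theorem \ref{Thm3} via Lemma \ref{lemma4} — is Schonbek's time-frequency splitting applied to a dissipative energy inequality, with the density dissipation recovered from a cross term $\int\nabla u\cdot\nabla^2a\,dx$ so that part of $\|\nabla^2 a\|_{L^2}^2$ acts as damping; the $L^1$ hypothesis enters through a uniform bound on the low-frequency part of the solution (equivalently a negative Besov norm), not through a semigroup estimate. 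Replacing your step (3) with this splitting argument, and restructuring (1)--(3) as a joint bootstrap rather than a sequence, would bring the sketch in line with what the cited theorem actually requires.
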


Now we are in the position to state our main results for the full compressible Navier-Stokes equations.
\begin{theo}\label{Thm3}
Under the assumptions of Theorem \ref{0them}, assume $(\rho, u,\theta)$ be the global solution of system \eqref{fullcns0}, then we have
\begin{equation}\label{Thm3decay}
\|\nabla a\|_{H^1}^2+\|\nabla u\|_{H^1}^2+\|\nabla\theta\|_{H^1}^2+\|\partial_ta\|_{L^2}^2+\|\partial_tu\|_{L^2}^2+\|\partial_t\theta\|_{L^2}^2\le C(1+t)^{-\frac52}, \quad t\ge T_3,
\end{equation}
where $T_3$ is a large constant given in Lemma \ref{lemma4}, and $C$ is a constant independent of time.
\end{theo}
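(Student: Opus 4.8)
The plan is to prove Theorem~\ref{Thm3} by establishing the faster decay rate $(1+t)^{-5/2}$ through a time-weighted energy method applied to the first and second order spatial derivatives, bootstrapping off the baseline rate $(1+t)^{-3/4}$ from Theorem~\ref{0them}. The underlying philosophy mirrors the isentropic case: once the solution is known to decay at the basic rate and the uniform-in-time bounds \eqref{uniform} hold, one can upgrade the derivative decay by multiplying the differential energy inequalities by an increasing power of $(1+t)$ and integrating. The target exponent $5/2$ for $\|\nabla(a,u,\theta)\|_{H^1}^2$ corresponds to an $L^2$ decay of $(1+t)^{-5/4}$ for the first derivative, which is the heat-kernel rate advertised in the abstract.

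First I would derive the differential energy inequalities for the high-order part of the solution. Applying $\nabla$ and $\nabla^2$ to the perturbed system for $(a,u,\theta)$ and testing against the appropriate quantities (using the effective viscous flux / material derivative structure that the authors exploit via $\dot u$), I expect to obtain an inequality of the schematic form
\begin{equation*}
\frac{d}{dt}\mathcal{E}_1(t)+c\,\mathcal{D}_1(t)\le C\big(\text{nonlinear terms}\big),
\end{equation*}
where $\mathcal{E}_1\sim\|\nabla(a,u,\theta)\|_{H^1}^2+\|(\partial_t a,\partial_t u,\partial_t\theta)\|_{L^2}^2$ and $\mathcal{D}_1$ controls one more derivative together with the dissipative temperature/velocity terms. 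The nonlinear terms, after using the uniform smallness of $\|\rho\|_{C^\alpha}$ and the uniform $H^2$ bounds, should be absorbable into the dissipation up to lower-order contributions controlled by $\|(a,u,\theta)\|_{L^2}$ and $\|\nabla(a,u,\theta)\|_{L^2}$.

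Next I would run the time-weighted iteration. Multiplying by $(1+t)^\sigma$ and integrating, the key is to convert the dissipation $\mathcal{D}_1$ into control of $\mathcal{E}_1$ itself (a Poincaré-type or spectral gap after accounting for the non-dissipative density mode, which typically forces one to carry the $\|a\|_{L^2}$ term as a source that decays at the known rate). Using the baseline rate from \eqref{decay22}, namely $\|\nabla(a,u,\theta)\|_{L^2}\lesssim(1+t)^{-3/4}$ on the low-frequency side and feeding it into the source terms, I would first obtain an intermediate rate, then iterate: each pass increases the admissible weight $\sigma$ until it saturates at the value giving $(1+t)^{-5/2}$. The constant $T_3$ from Lemma~\ref{lemma4} enters precisely because the improved dissipation estimate and the control of the second-order derivatives only close after some finite time, once the lower-order norms have decayed enough to be treated as small coefficients.

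The hard part will be handling the coupling in the energy estimate for the second-order derivatives and the time derivatives $(\partial_t a,\partial_t u,\partial_t\theta)$ simultaneously, since the density equation is only transport (no dissipation in $a$), so the decay of $\|\nabla a\|$ must be extracted indirectly from the velocity dissipation via the effective-viscous-flux structure rather than from a direct parabolic gain. Closing the nonlinear estimates at the $\nabla^2$ level for large data—where one cannot assume the full solution is small in high norms, only that $\|\rho\|_{C^\alpha}$ is small and the $H^2$ norm is uniformly bounded—will require carefully distributing derivatives so that the worst factor always carries the uniformly-bounded (rather than merely integrable) norm, and using the integrability $\int_0^\infty\|\nabla(a,u,\theta)\|_{H^2}^2\,d\tau<\infty$ from \eqref{uniform} to absorb borderline terms. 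I expect this bookkeeping, together with correctly identifying the weight exponent at which the iteration saturates, to be the main technical obstacle.
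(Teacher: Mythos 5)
Your plan follows the same route as the paper's proof --- coercive energy inequalities at the $\nabla$ and $\nabla^2$ levels that close only after a large time, indirect recovery of density dissipation, and a weighted-in-time upgrade of the rate driven by the known decay of the low-order norm --- but two steps as you state them would not go through and must be replaced by the paper's precise tools. First, there is no Poincar\'e inequality or spectral gap on $\mathbb{R}^3$, so the dissipation $\|\nabla^2(u,\theta)\|_{H^1}^2+\|\nabla^2a\|_{L^2}^2$ cannot directly control $\mathcal E_2^2(t)\sim\|\nabla(a,u,\theta)\|_{H^1}^2$. The paper's substitute (Lemma \ref{lemma4}) is Schonbek's Fourier splitting at $|\xi|\sim (R/(1+t))^{1/2}$, which yields $\|\nabla^{k+1}f\|_{L^2}^2\ge \frac{R}{1+t}\|\nabla^{k}f\|_{L^2}^2-\frac{R^2}{(1+t)^2}\|\nabla^{k-1}f\|_{L^2}^2$; the leftover term $\frac{R^2}{(1+t)^2}\|(a,u,\theta)\|_{H^1}^2\lesssim (1+t)^{-7/2}$ (by \eqref{decay22}) is precisely the source you anticipated, and a single pass with weight $(1+t)^3$ already gives $(1+t)^{-5/2}$ --- no iteration is needed, and the saturation of the exponent at $5/2$ is exactly the point at which this low-frequency source stops being negligible. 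Second, the nonlinear terms are not absorbed by ``uniform smallness of $\|\rho\|_{C^\alpha}$'': in \eqref{thetarho} only the exponent $\alpha$ is small, while the norm itself is merely bounded by $M_2$. The smallness that closes Lemmas \ref{l4}--\ref{l6} comes from interpolating the prefactors (e.g.\ $\|(a,u,\theta)\|_{H^1}$, $\|\nabla(a,u)\|_{L^2}^{1/2}$, $\|\theta\|_{L^2}^{1/4}$) against the uniform bound \eqref{uniform} so that they are controlled by $H^1$ norms decaying as in \eqref{decay22}, hence drop below $\tfrac14\min\{\mu,1,\delta\}$ for $t\ge T_2$; you do invoke this mechanism at the end of your proposal, and it is the only source of smallness available for large data. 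The remaining differences are cosmetic: the paper obtains the $\|\nabla^2a\|_{L^2}^2$ dissipation from the cross functional $\delta\int\nabla u\cdot\nabla^2a\,dx$ (Lemma \ref{l6}) rather than the effective viscous flux, retains half of that dissipation as a damping term so that the full $\|\nabla a\|_{H^1}^2$ (not just $\|\nabla a\|_{L^2}^2$) appears on the left after splitting, and bounds $\|(\partial_ta,\partial_tu,\partial_t\theta)\|_{L^2}$ at the very end directly from the equations and the already-established spatial decay, instead of carrying the time derivatives inside the energy functional.
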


 \begin{rema}\label{nabla2theta}
 Although the initial data $\theta_0\in H^1(\mathbb R^3)$ given in Theorem \ref{Thm3},
 it is easy to check that $\theta$ actually belongs to $H^2(\mathbb R^3)$ duing to the
 compatibility conditions \eqref{admissible}. In other words, we have
   \begin{equation}\label{thetabound}
  \|\nabla^2\theta\|_{L^\infty(0,\infty;L^2)}<\infty.
  \end{equation}
   This proof can be found in Corollary 2.1 in \cite{he-huang-wang2}.
  \end{rema}

\begin{rema}
Compared with the decay rate \eqref{decay22}, our decay results not only imply that the second order spatial derivative of solution converges to zero, but also shows that the decay rate for the first order spatial derivative of solution is optimal in the sense that it coincides with the decay rate of the solution to the heat equation.
\end{rema}

\begin{rema}
By the Sobolev interpolation inequality, it is shown that the solution $(\rho,u, T)$ converges to the constant equilibrium state $(1,0,1)$ at the $L^q(2\le q\le 6)$-rate $(1+t)^{-\frac34-\frac{3q-6}{4q}}$.
\end{rema}

\begin{rema}
In our previous work \cite{gao-wei-yao}, we obtained the similar decay results on the isentropic compressible Navier-Stokes equations. In above theorem, Since the pressure $P=\rho\theta$ in the system \eqref{fullcns0}, and we only have the uniform bound $\|\theta\|_{L^\infty_t(H^1)}\le C$ in \eqref{uniform}, thus the energy estimate for the second order spatial derivative of solution is more complicated.
\end{rema}

Let us give some comments on the analysis for the proof of Theorem \ref{Thm3}.
Since the solution itself and its first order spatial derivative decay with the same $L^2-$ rate $(1+t)^{-\frac34}$,
these quantities can be small enough if the time is large.
Thus, we take the strategy of the frame of small initial data (cf.\cite{nishida1}) to establish the energy estimate:
\begin{equation}\label{ana}
\begin{aligned}
\frac{d}{dt}&\mathcal E_2^2(t)+c_0(\|\nabla^2 u\|_{H^1}^2+\|\nabla^2\theta\|_{H^1}^2+\|\nabla^2a\|_{L^2}^2)\\&
\lesssim Q(t)(\|\nabla^2u\|_{H^1}^2+\|\nabla^2\theta\|_{H^1}^2+\|\nabla^2a\|_{L^2}^2)
+\|\nabla u\|_{L^\infty}\|\nabla^2a\|_{L^2}^2+\|\theta\|_{L^\infty}\|\nabla^2a\|_{L^2}\|\nabla^3u\|_{L^2},
\end{aligned}
\end{equation}
where $\mathcal E_2^2(t)$ is equivalent to $\|\nabla(a,u,\theta)\|_{H^1}^2$,
and $Q(t)$ consists of some difficult terms, such as $\|(a,u)\|_{L^\infty}$ and $\|\nabla(a,u)\|_{L^3}$.
To close the estimate, the difficulty is that we can only use the smallness of quantities
in $H^1$ norm  rather than $H^2$ norm. 
Thus, our idea is to apply the Sobolev interpolation inequality to control these quantities by the product
of solution itself and the second order spatial derivative. Since
the latter one is uniform bounded with respect to time(see \eqref{uniform}), $Q(t)$ is a
small quantity which appears as a prefactor in front of dissipation term. 
And hence, $Q(t)$ is a small quantity actually after some large time.
The second and third terms on the righ-thand side of inequality \eqref{ana} can be controlled by the similar
method, see \eqref{df} and \eqref{df2} in detail.
Then, the terms on the right-hand side of \eqref{ana} can be absorbed into the second term
on the left-hand side of inequality \eqref{df} after a fixed large time.
Finally,  we hope to perform the upper decay rate \eqref{Thm3decay} by using
the energy inequality \eqref{ana} and the time-frequency splitting method by Schonbek \cite{schonbek1}.
However, unlike the incompressible flow(cf.\cite{schonbek2, Schonbek-Wiegner}), the dissipation of density is weaker than the one of the velocity and temperature for the full compressible Navier-Stokes equations.
To obtain the dissipative estimate of density, we will weaken the coefficients of velocity and temperature dissipation, and thus one part of the dissipation of density will play a role of damping term. Finally, according to time-frequency splitting method, we can derive decay estimate \eqref{Thm3decay}.

Our third result investigates the decay rate for the global solution of system \eqref{fullcns0} in the case that the initial data $(a_0, u_0,\theta_0)\in \dot H^{-s}$ with $s\in(0,3/2)$, stated as follows
 \begin{theo}\label{1theo}
 Under the assumptions of Theorem \ref{0them}, but $(a_0, u_0,\theta_0)\in \dot H^{-s}$ with $s\in(0,3/2)$. Let $(\rho, u,\theta)$ be the global solution of system \eqref{fullcns0},  then for all $t\ge 0$,
 \begin{equation*}
 \|\G a(t)\|^2_{L^2}+\|\G u(t)\|^2_{L^2}+\|\G\dot u(t)\|_{L^2}^2+\|\G\theta(t)\|_{L^2}^2\le C,
 \end{equation*}
 and
 \begin{equation*}
 \begin{aligned}
 \|a(t)\|^2_{H^1}+\|u(t)\|^2_{H^1}+\|\theta(t)\|_{H^1}^2+\|\dot u(t)\|_{L^2}^2\le C(1+t)^{-s}.
 \end{aligned}
 \end{equation*}
 For $t\ge T_4$, there holds
\begin{equation*}
\|\nabla a(t)\|_{H^1}^2+\|\nabla u(t)\|_{H^1}^2+\|\nabla\theta(t)\|_{H^1}^2\le C(1+t)^{-(1+s)}, \end{equation*}
where $T_4$ is given in Proposition \ref{prop4}, $C$ is a constant independent of time.
 \end{theo}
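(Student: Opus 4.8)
The plan is to follow the scheme of Theorem \ref{Thm2}, coupling the uniform-in-time bounds \eqref{uniform} and the energy machinery of Theorem \ref{Thm3} with the interpolation between negative and positive Sobolev norms (Lemma \ref{l1}), the crux being the propagation of the $\dot H^{-s}$ norm. Setting $a=\rho-1$, $\theta=T-1$, I first rewrite \eqref{fullcns0} as a perturbation system whose linear part reads
\[
\p_t a+\dive u=f_1,\qquad \p_t u-\mu\Delta u-(\mu+\lambda)\nabla\dive u+\nabla a+\nabla\theta=f_2,\qquad \p_t\theta-\Delta\theta+\dive u=f_3,
\]
with $f_1,f_2,f_3$ gathering the quadratic and higher nonlinearities, among them the pressure remainder $\nabla(a\theta)$ coming from $P=\rho T$. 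Applying $\G$ and pairing the three equations with $\G a$, $\G u$, $\G\theta$, the linear couplings $\int\G\dive u\,\G a+\int\G\nabla a\cdot\G u$ and $\int\G\nabla\theta\cdot\G u+\int\G\dive u\,\G\theta$ cancel after integration by parts, so that
\[
\frac12\frac{d}{dt}\|\G(a,u,\theta)\|_{L^2}^2+\mu\|\G\nabla u\|_{L^2}^2+(\mu+\lambda)\|\G\dive u\|_{L^2}^2+\|\G\nabla\theta\|_{L^2}^2=\int\G f_1\,\G a+\int\G f_2\cdot\G u+\int\G f_3\,\G\theta.
\]
The negative-norm bound on the material derivative $\dot u$ is obtained alongside, by working with the equation for $\dot u$ that underlies the large-data energy of Theorem \ref{0them}.

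To show the right-hand side is integrable in time I would estimate each pairing with Lemma \ref{l2}: by Hardy--Littlewood--Sobolev $\|\G(fg)\|_{L^2}\lesssim\|fg\|_{L^p}$ with $\tfrac1p=\tfrac12+\tfrac{s}{3}$ --- admissible exactly because $s<3/2$ --- and then splitting the $L^p$ norm of each product onto its factors via Sobolev embedding. As for Theorem \ref{Thm2}, the range of $s$ dictates two regimes. For $s\in(0,1/2]$ the nonlinear terms are dominated by $\|\G(a,u,\theta)\|_{L^2}^2$ times $\|\nabla(a,u,\theta)\|_{H^1}^2$, whose time integral is finite by \eqref{uniform}, so a Gronwall argument yields the uniform bound $\|\G(a,u,\theta)(t)\|_{L^2}\lm C$. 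For $s\in(1/2,3/2)$ the same bookkeeping instead produces the factor $\|(a,u,\theta)\|_{L^2}^{s-1/2}\|\nabla(a,u,\theta)\|_{L^2}^{5/2-s}$, whose time integrability is not free and must be purchased from an improved first-order decay rate.

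This fixes the order of operations. In the range $s\in(0,1/2]$ I combine the uniform bound on $\|\G(a,u,\theta)\|_{L^2}$ with the dissipative inequality of Theorem \ref{Thm3} and the interpolation $\|\nabla^\ell f\|_{L^2}\lesssim\|\nabla^{\ell+1}f\|_{L^2}^{1-\theta_\ell}\|\G f\|_{L^2}^{\theta_\ell}$, $\theta_\ell=1/(\ell+1+s)$, to lower-bound the dissipation by a power of the energy and close a differential inequality of the type $\frac{d}{dt}\mathcal E^2+c\,\mathcal E^{2(1+s)/s}\lm0$; this gives $\|(a,u,\theta)(t)\|_{H^1}^2+\|\dot u(t)\|_{L^2}^2\lesssim(1+t)^{-s}$. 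Inserting this rate into the dissipation estimate of Proposition \ref{prop4} upgrades the first-order part to $\|\nabla(a,u,\theta)(t)\|_{H^1}^2\lesssim(1+t)^{-(1+s)}$ for $t\ge T_4$. With these rates in hand the factor $\int_0^t\|(a,u,\theta)\|_{L^2}^{s-1/2}\|\nabla(a,u,\theta)\|_{L^2}^{5/2-s}\,d\tau$ becomes finite, which closes the negative Sobolev estimate in the remaining range $s\in(1/2,3/2)$, and the same interpolation-and-ODE argument then delivers all three decay statements for the full range $s\in(0,3/2)$.

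I expect the principal obstacle to be the nonlinear pressure coupling $\nabla(a\theta)$: since Theorem \ref{0them} only supplies a uniform $H^1$ (not $H^2$) bound on $\theta$, the temperature factor cannot always be assigned to the highest-regularity slot, and one must lean on the extra regularity \eqref{thetabound} from Remark \ref{nabla2theta} together with a careful choice of Lebesgue exponents to keep every term either time-integrable (in the propagation step) or of the right homogeneity (in the interpolation step). The secondary difficulty is structural: because the negative-norm bound for $s>1/2$ genuinely requires the improved decay rate, the two halves of the theorem must be established in the stated sequence rather than simultaneously.
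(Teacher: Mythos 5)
Your proposal follows essentially the same route as the paper: the same perturbation form and $\Lambda^{-s}$ energy identity with cancellation of the linear couplings, the Hardy--Littlewood--Sobolev estimate of the nonlinear terms split at $s=1/2$, the recovery of $\|\Lambda^{-s}\dot u\|_{L^2}$ from the momentum equation, and the same order of operations (uniform negative-norm bound and $(1+t)^{-s}$ decay for $s\le 1/2$, then the improved first-order rate $(1+t)^{-(1+s)}$ after $T_4$, which is exactly what makes the propagation for $s\in(1/2,3/2)$ integrable). The only cosmetic difference is that in the Gronwall step the paper's bound is linear rather than quadratic in $\|\Lambda^{-s}(a,u,\theta)\|_{L^2}$, which does not affect the conclusion.
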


The proof of Theorem \ref{1theo} is similar as the proof of Theorem \ref{Thm2} in Section \ref{3}, except that additional estimates of the temperature equation are established. And it should be noticed that we only have the uniform bound $\|\theta\|_{L^\infty_t(H^1)}\le C$ in \eqref{uniform}, thus the energy estimate for the second order spatial derivative of solution is more complicated, which is necessary for improving the decay rate for the first order derivative of solution when $s\in(0,1/2]$.

\section{The proof of the results for ICNS}\label{3}

\quad First, we need to derive the evolution of the negative Sobolev norms of the solution for the isentropic compressible Navier-Stokes equations \eqref{cns}.

\textbf{Energy evolution of negative Sobolev norms.} In what follows, we will derive the evolution of the negative Sobolev norms of the solution of system \eqref{cns}. In order to estimate the nonlinear terms, we need to restrict ourselves to that $s\in(0,3/2)$. Recalling $a:=\rho-1$, we rewrite \eqref{cns} as
\begin{equation}\label{cns1}
  	\left\{\begin{aligned}
  &\partial_ta+\dive u=-\dive(au),\\
   &\partial_t(\rho u)+\dive(\rho u\otimes u)-\mu\Delta u-(\mu+\lambda)\nabla\dive u+\gamma\nabla a+(\gamma-1)\nabla H(\rho|1)=0,\\
  	\end{aligned}\right.
  \end{equation}
  where
\begin{equation}
H(\rho|1)=\left\{
\begin{array}{cl}
&\frac{1}{\gamma-1}(\rho^{\gamma}-1-\gamma(\rho-1)), \quad {\rm when} \quad \gamma>1,\\
&\rho\ln\rho-\rho+1,   \qquad {\rm when} \quad \gamma=1.\\
\end{array}\right.
\end{equation}

\begin{lemm}\label{l3}
 For $s\in(0, 1/2]$, we have
 \begin{equation}\label{est11}
 \begin{aligned}
 \frac12&\frac{d}{dt}\int(\gamma|\G a|^2+|\G(\rho u)|^2)dx+C\int|\nabla\G u|^2dx\\&
 \lesssim (\|\nabla u\|_{H^1}^2+\|\nabla a\|^2_{H^1})(\|\G(\rho u)\|_{L^2}+\|\G a\|_{L^2}+\|\nabla\G u\|_{L^2}) +\|\nabla u\|^2_{H^1}\|\nabla \rho\|_{H^1},
  \end{aligned}
 \end{equation}
 and for $s\in(1/2,3/2)$, we have

 \begin{equation}\label{est12}
\begin{aligned}
\frac12&\frac{d}{dt}\int(\gamma|\G a|^2+|\G(\rho u)|^2)dx+C\int|\nabla\G u|^2dx\\&
\lesssim \|(a,u)\|_{L^2}^{s-1/2}\|(\nabla a, \nabla u)\|_{L^2}^{5/2-s}(\|\G a\|_{L^2}+\|\G(\rho u)\|_{L^2})\\&
\quad +\|(a,u)\|_{L^2}^{s-1/2}\|(\nabla a, \nabla u)\|_{L^2}^{3/2-s}\|(a, u)\|_{H^1}\|\nabla\G u\|_{L^2}\\&
\quad +\|u\|_{L^2}^{2s}\|\nabla u\|_{L^2}^{3-2s}\|\nabla a\|_{L^2}+\|a\|_{L^2}^{s-1/2}\|\nabla a\|_{L^2}^{3/2-s}\|u\|_{L^2}^{s+1/2}\|\nabla u\|_{L^2}^{5/2-s}.
\end{aligned}
\end{equation}
\end{lemm}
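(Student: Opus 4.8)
The plan is to run a negative-order energy estimate on the reformulated system \eqref{cns1}. First I would apply the operator $\G$ to both equations, then take the $L^2$ inner product of the density equation against $\gamma\G a$ and of the momentum equation against $\G(\rho u)$. With this pairing the time-derivative contributions assemble into $\frac12\frac{d}{dt}\int(\gamma|\G a|^2+|\G(\rho u)|^2)\,dx$, matching the left-hand side of \eqref{est11} and \eqref{est12}; note that $\int\G\partial_t(\rho u)\cdot\G(\rho u)\,dx$ produces this derivative cleanly, with no remainder. To expose the dissipation I would then substitute $\rho u=u+au$ wherever $\G(\rho u)$ is paired against a \emph{linear} spatial term, so that the principal part is tested against $\G u$ while the remainder, which carries the factor $au$, is sent to the nonlinear side.

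Two cancellations structure the linear part. After integrating by parts, the term $\gamma\int\dive\G u\,\G a\,dx$ from the continuity equation and the principal part $\gamma\int\G\nabla a\cdot\G u\,dx$ of the pressure gradient cancel exactly; this is the standard compressible cancellation that avoids a loss of one derivative. The viscous terms $-\mu\Delta u-(\mu+\lambda)\nabla\dive u$, tested against $\G u$, produce $\mu\|\nabla\G u\|_{L^2}^2+(\mu+\lambda)\|\dive\G u\|_{L^2}^2$, which under $\mu>0$ and $\mu>\frac12\lambda$ is bounded below by $C\|\nabla\G u\|_{L^2}^2$, giving the dissipative term on the left. All remaining contributions---the remainders from the $au$ piece of $\rho u$, the convection term $\dive(\rho u\otimes u)$, the nonlinear density flux $\dive(au)$, and the pressure remainder $(\gamma-1)\nabla H(\rho|1)$---are collected on the right as nonlinear errors.

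The heart of the proof is estimating these errors, and this is where the case split originates. Each has the schematic form $\int\G(fg)\cdot\G h\,dx$ (after one integration by parts to balance the derivatives), which I would bound by Cauchy--Schwarz and then the Hardy--Littlewood--Sobolev inequality of Lemma \ref{l2}, namely $\|\G(fg)\|_{L^2}\lesssim\|fg\|_{L^{6/(3+2s)}}$. For $s\in(0,1/2]$ the exponent $6/(3+2s)$ lies in $[3/2,2)$, large enough that after H\"older one factor may be kept in $L^2$ and the other estimated crudely by its $H^1$ or $H^2$ norm; this yields the positive-norm products $\|\nabla u\|_{H^1}^2+\|\nabla a\|_{H^1}^2$ and $\|\nabla u\|_{H^1}^2\|\nabla\rho\|_{H^1}$ appearing in \eqref{est11}. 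For the nonlinear pressure term I would use the pointwise bounds on $\rho$ from \eqref{Lower} and \eqref{uniform-bound} together with $H(\rho|1)\sim a^2$ and $\nabla H(\rho|1)\sim a\nabla a$ to treat it as a higher-order contribution.

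For $s\in(1/2,3/2)$ the exponent $6/(3+2s)$ falls below $3/2$, so the crude $L^2\times H^1$ splitting is unavailable and the direct Hardy--Littlewood--Sobolev route breaks down. Here I would distribute each product sharply by Gagliardo--Nirenberg, using $\|g\|_{L^{3/s}}\lesssim\|g\|_{L^2}^{s-1/2}\|\nabla g\|_{L^2}^{3/2-s}$ to convert the awkward Lebesgue index into the mixed factors $\|(a,u)\|_{L^2}^{s-1/2}\|(\nabla a,\nabla u)\|_{L^2}^{5/2-s}$ and their analogues in \eqref{est12}; one checks that in each term the interpolation exponents combine consistently with the scaling of the corresponding nonlinearity (some terms being quadratic, others cubic or quartic once the $a$-factors in $\rho u$ are expanded). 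I expect the main obstacle to be precisely this bookkeeping in the range $s\in(1/2,3/2)$: the product must be split so that every factor is a controlled quantity and so that the negative-order norms $\|\G a\|_{L^2}+\|\G(\rho u)\|_{L^2}$ and the dissipation $\|\nabla\G u\|_{L^2}$ each appear only linearly, since only then will the later argument---feeding \eqref{uniform-bound} and the decay \eqref{decay} into a Gronwall and interpolation step through Lemma \ref{l1}---render the time integral of the right-hand side finite and so propagate the $\dot H^{-s}$ bound.
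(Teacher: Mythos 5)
Your proposal is correct and follows essentially the same route as the paper: the same $\gamma\|\G a\|_{L^2}^2+\|\G(\rho u)\|_{L^2}^2$ energy pairing, the same $\rho u=u+au$ splitting to extract the dissipation and the pressure/continuity cancellation, and the same estimate of the nonlinear terms via Lemma \ref{l2} with $\|\G(fg)\|_{L^2}\lesssim\|fg\|_{L^{6/(3+2s)}}$, Hölder into $L^{3/s}\times L^2$, and the two Gagliardo--Nirenberg interpolations of $\|\cdot\|_{L^{3/s}}$ that produce the case split at $s=1/2$. The only cosmetic imprecision is the phrase that Hardy--Littlewood--Sobolev ``breaks down'' for $s\in(1/2,3/2)$ --- it is still used there; only the interpolation of the $L^{3/s}$ norm changes from $(\dot H^1,\dot H^2)$ to $(L^2,\dot H^1)$, exactly as you then write.
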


\begin{proof}
Applying $\Lambda ^{-s}$ to $\eqref{cns1}_1$, $\eqref{cns1}_2$ and multiplying the resulting by $\gamma\Lambda^{-s}a$, $\Lambda^{-s}(\rho u)$ respectively, summing up and then integrating over $\mathbb R^3$ by parts, we obtain
\begin{equation}\label{est1}
\begin{aligned}
&\frac12\frac{d}{dt}\int(\gamma|\G a|^2+|\G(\rho u)|^2)dx+\mu\int|\nabla\G u|^2dx+(\mu+\lambda)\int|\dive \G u|^2dx\\&
=-\int\G\nabla\left((\gamma-1)H(\rho|1)\right)\cdot \G(\rho u)dx-\int\G\dive(\rho u\otimes u)\cdot \G(\rho u)dx\\&
\quad -\gamma\int \G\dive(au)\cdot \G adx+\mu\int\G(\Delta u)\cdot\G(au)dx\\&
\quad +(\mu+\lambda)\int\G(\nabla\dive u)\cdot\G(au)dx\\&
:=I_1+I_2+I_3+I_4+I_5.
\end{aligned}
\end{equation}

In order to estimate the nonlinear terms in the right-hand side of \eqref{est1}, we restrict the value of $s$. If $s\in(0,1/2]$, then $1/2+s/3<1$ and $3/s\ge 6$. Then thanks to \eqref{a.3}, we obtain
\begin{equation}\label{H0}
\begin{aligned}
I_1&=-\int\G\nabla\left((\gamma-1)H(\rho|1)\right)\cdot \G(\rho u)dx\\&
\lesssim \|\G(\nabla H(\rho|1))\|_{L^2}\|\G(\rho u)\|_{L^2}\\&
\lesssim \|\nabla H(\rho|1)\|_{L^{\frac{1}{1/2+s/3}}}\|\G(\rho u)\|_{L^2}\\&
\lesssim \|a\nabla a\|_{L^{\frac{1}{1/2+s/3}}}\|\G(\rho u)\|_{L^2}\\&
\lesssim \|a\|_{L^{3/s}}\|\nabla a\|_{L^2}\|\G(\rho u)\|_{L^2}\\&
\lesssim \|\nabla a\|_{L^2}^{1/2+s}\|\nabla^2a\|_{L^2}^{1/2-s}\|\nabla a\|_{L^2}\|\G(\rho u)\|_{L^2}\\&
\lesssim (\|\nabla a\|_{L^2}^2+\|\nabla ^2a\|_{L^2}^2)\|\G(\rho u)\|_{L^2}\\&
\lesssim \|\nabla a\|^2_{H^1}\|\G(\rho u)\|_{L^2},
\end{aligned}
\end{equation}
where we have used  $\underline \rho\le \rho\le M$
and $|\nabla H(\rho|1)|\lesssim |a\nabla a|$, which is not difficult to obtain by Taylor expansion.

Integrating by part and applying the uniform bound \eqref{uniform-bound}, we have
\begin{equation*}
\begin{aligned}
I_2&=\int \G(\rho u\otimes u)\cdot \G(\nabla\rho u+\rho\dive u)dx\\&
\lesssim \|\G(\rho u\otimes u)\|_{L^2}(\|\G(\nabla \rho u)\|_{L^2}+\|\G(\rho\dive u)\|_{L^2})\\&
\lesssim \|\rho\|_{L^\infty}\|u\|_{L^2}\|u\|_{L^{3/s}}(\|u\|_{L^{3/s}}\|\nabla\rho\|_{L^2}+\|\rho\|_{L^{3/s}}\|\nabla u\|_{L^2})\\&
\lesssim \|\nabla u\|_{L^2}^{1/2+s}\|\nabla^2 u\|^{1/2-s}_{L^2}(\|\nabla u\|_{L^2}^{1/2+s}\|\nabla^2u\|_{L^2}^{1/2-s}\|\nabla\rho\|_{L^2}+\|\nabla\rho\|_{L^2}^{1/2+s}\|\nabla^2\rho\|_{L^2}^{1/2-s}\|\nabla u\|_{L^2})\\&
\lesssim \|\nabla u\|^2_{H^1}\|\nabla\rho\|_{H^1}.
\end{aligned}
\end{equation*}
Similarly, it is easy to check that
\begin{equation*}
\begin{aligned}
I_3\lesssim (\|\nabla u\|^2_{H^1}+\|\nabla a\|^2_{H^1})\|\G a\|_{L^2}.
\end{aligned}
\end{equation*}
Integrating by part and using the same method as $I_2$, one arrives at
\begin{equation*}
\begin{aligned}
I_4&=\mu\int\G(\Delta u)\G(au)dx\\&
=-\mu \int\nabla\G u\cdot\G(\nabla au+a\nabla u)dx\\&
\lesssim \|\nabla\G u\|_{L^2}(\|\nabla u\|^2_{H^1}+\|\nabla a\|^2_{H^1}).
\end{aligned}
\end{equation*}
Similarly, it holds on
\begin{equation*}
I_5\lesssim \|\nabla\G u\|_{L^2}(\|\nabla u\|^2_{H^1}+\|\nabla a\|_{H^1}^2).
\end{equation*}
Collecting all above estimates, we deduce \eqref{est11}.

Now if $s\in (1/2,3/2)$, we shall estimate the right-hand side of \eqref{est1} in a different way. Since $s\in(1/2,3/2)$, we have $1/2+s/3<1$ and $2<3/s<6$. Then using the different Sobolev interpolation, we have
\begin{equation*}
\begin{aligned}
I_1&=\int\G(\nabla(\gamma-1)H(\rho|1))\cdot\G(\rho u)dx\\&
\lesssim \|a\|_{L^{3/s}}\|\nabla a\|_{L^2}\|\G(\rho u)\|_{L^2}\\&
\lesssim \|a\|_{L^2}^{s-1/2}\|\nabla a\|_{L^2}^{3/2-s}\|\nabla a\|_{L^2}\|\G(\rho u)\|_{L^2}.
\end{aligned}
\end{equation*}
Integrating by part, it holds on
\begin{equation*}
\begin{aligned}
I_2&=\int\G(\rho u\otimes u)\cdot\G(\nabla\rho u+a\dive u+\dive u)dx\\&
\lesssim \|\G(\rho u\otimes u)\|_{L^2}(\|\G(\nabla\rho u)\|_{L^2}+\|\G(a\dive u)\|_{L^2}+\|\G(\dive u)\|_{L^2})\\&
\lesssim \|\rho\|_{L^\infty}\|u\|_{L^{3/s}}\|u\|_{L^2}(\|u\|_{L^{3/s}}\|\nabla \rho\|_{L^2}+\|a\|_{L^{3/s}}\|\nabla u\|_{L^2}+\|\nabla\G u\|_{L^2}).
\end{aligned}
\end{equation*}
Using the Sobolev interpolation
\begin{equation*}
\|f\|_{L^{3/s}}\lesssim \|f\|_{L^2}^{s-1/2}\|\nabla f\|_{L^2}^{3/2-s}, \quad s\in(1/2,3/2)
\end{equation*}
again, it is easy to derive
\begin{equation*}
\begin{aligned}
I_2&\lesssim \|u\|_{L^2}^{2s}\|\nabla u\|_{L^2}^{3-2s}\|\nabla\rho\|_{L^2}+\|u\|_{L^2}^{s+1/2}\|\nabla u\|_{L^2}^{3/2-s}\|\nabla\G u\|_{L^2}\\&
\quad +\|a\|_{L^2}^{s-1/2}\|\nabla a\|_{L^2}^{3/2-s}\|u\|_{L^2}^{s+1/2}\|\nabla u\|_{L^2}^{5/2-s}.
\end{aligned}
\end{equation*}
Similarly, we have
\begin{equation*}
I_3\lesssim (\|u\|_{L^2}^{s-1/2}\|\nabla u\|_{L^2}^{3/2-s}\|\nabla a\|_{L^2}+\|a\|_{L^2}^{s-1/2}\|\nabla a\|_{L^2}^{3/2-s}\|\nabla u\|_{L^2})\|\G a\|_{L^2},
\end{equation*}
\begin{equation*}
I_4\lesssim \|\G\nabla u\|_{L^2}(\|u\|_{L^2}^{s-1/2}\|\nabla u\|_{L^2}^{3/2-s}\|\nabla a\|_{L^2}+\|a\|_{L^2}^{s-1/2}\|\nabla a\|_{L^2}^{3/2-s}\|\nabla u\|_{L^2}),
\end{equation*}
\begin{equation*}
I_5\lesssim \|\G\nabla u\|_{L^2}(\|u\|_{L^2}^{s-1/2}\|\nabla u\|_{L^2}^{3/2-s}\|\nabla a\|_{L^2}+\|a\|_{L^2}^{s-1/2}\|\nabla a\|_{L^2}^{3/2-s}\|\nabla u\|_{L^2}).
\end{equation*}
Collecting all above estimates together, we obtain \eqref{est12}. Thus we complete the proof of lemma.
\end{proof}

Before giving decay result when $s\in(0,1/2]$, we need to introduce the dissipation inequality obtained in \cite{he-huang-wang}.

\begin{prop}(see \cite{he-huang-wang})\label{pro1}
 Under the assumptions of Theorem \ref{Thm1}, then there holds
 \begin{equation}\label{est2}
 \frac{d}{dt}X_1(t)+C(\|\nabla^2u\|^2_{L^2}+\|\nabla u\|^2_{L^2}+\|\nabla a\|^2_{L^2}+\|\nabla\dot u\|^2_{L^2})\le 0,
 \end{equation}
 where $X_1(t)\sim \|u\|_{H^1}^2+\|a\|^2_{H^1}+\|\dot u\|^2_{L^2}$.
  \end{prop}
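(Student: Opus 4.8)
The plan is to realize $X_1(t)$ as a weighted sum of three functionals, one for each scale of regularity, and to choose the weights so that the combined dissipation absorbs every nonlinear remainder. Throughout write $\dot f:=\partial_t f+u\cdot\nabla f$ for the material derivative; the admissible condition in Theorem \ref{Thm1} guarantees $\dot u|_{t=0}\in L^2$, so $X_1(0)$ is finite. I will use the density lower bound \eqref{Lower}, the uniform-in-time bounds \eqref{uniform-bound} and $\sup_t\|\rho\|_{C^\a}\le M$ freely. First I record the basic energy law: testing the momentum equation \eqref{cns1}$_2$ against $u$ and using \eqref{cns1}$_1$ together with the relative-entropy primitive $H(\rho|1)$, the convective and pressure contributions cancel exactly on $\R^3$, leaving $\frac{d}{dt}E_0(t)+\mu\|\nabla u\|_{L^2}^2+(\mu+\lambda)\|\dive u\|_{L^2}^2=0$, where $E_0=\int(\frac12\rho|u|^2+H(\rho|1))\,dx\sim\|u\|_{L^2}^2+\|a\|_{L^2}^2$. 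This furnishes the $\|\nabla u\|_{L^2}^2$ dissipation with no error term.

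Next comes the first-order Hoff-type estimate, where the material derivative enters. Applying $\partial_t(\cdot)+\dive(\cdot\,u)$ to each component of \eqref{cns1}$_2$ and testing against $\dot u$ produces $\frac12\frac{d}{dt}\int\rho|\dot u|^2\,dx+\mu\|\nabla\dot u\|_{L^2}^2+(\mu+\lambda)\|\dive\dot u\|_{L^2}^2=\mathcal R$, where $\mathcal R$ collects the commutators between $\dot{}$ and the spatial operators, namely cubic terms of the type $\int|\nabla u|^2\,\nabla\dot u$, $\int\nabla a\,\dive u\,\dot u$ and $\int\rho\,(\nabla u)^2\,\dot u$. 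This gives the $\|\nabla\dot u\|_{L^2}^2$ dissipation and, through $\int\rho|\dot u|^2$, the $\|\dot u\|_{L^2}^2$ component of $X_1$. To recover $\|\nabla^2u\|_{L^2}^2$ I rewrite \eqref{cns1}$_2$ as the elliptic system $Lu=-\rho\dot u-\gamma\nabla a-(\gamma-1)\nabla H(\rho|1)$ and apply $L^2$-regularity for the Lamé operator (available since $\mu>0$, $2\mu+3\lambda\ge0$), obtaining $\|\nabla^2u\|_{L^2}\lesssim\|\dot u\|_{L^2}+\|\nabla a\|_{L^2}+(\text{nonlinear})$. This both inserts $\|\nabla^2u\|_{L^2}^2$ as a dissipation term and lets me reduce second-order quantities inside $\mathcal R$ to $\|\dot u\|_{L^2}$ and $\|\nabla a\|_{L^2}$.

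Since the continuity equation carries no diffusion, the density dissipation $\|\nabla a\|_{L^2}^2$ must be manufactured from the pressure gradient by a cross term. I differentiate $\int\rho\,u\cdot\nabla a\,dx$ in time and substitute $\partial_t(\rho u)$ from \eqref{cns1}$_2$: the pressure contribution yields exactly $-\gamma\|\nabla a\|_{L^2}^2$, while the inertial, viscous and continuity-generated terms are bounded, after integration by parts, by $\epsilon\|\nabla a\|_{L^2}^2+C(\|\nabla^2u\|_{L^2}^2+\|\nabla u\|_{L^2}^2+\|\dot u\|_{L^2}^2)$, i.e. by dissipation already in hand. I then set $X_1:=E_0+\kappa_1\int\rho|\dot u|^2\,dx+\kappa_2\int\rho\,u\cdot\nabla a\,dx$ with $0<\kappa_2\ll\kappa_1\ll1$; the bounds \eqref{Lower} and \eqref{uniform-bound} give the equivalence $X_1\sim\|u\|_{H^1}^2+\|a\|_{H^1}^2+\|\dot u\|_{L^2}^2$ and control the indefinite cross term by the energy, while the smallness of $\kappa_2$ prevents the gained $-\kappa_2\gamma\|\nabla a\|_{L^2}^2$ from being reabsorbed.

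The decisive difficulty is closing the estimate for large data with no surviving right-hand side, because the uniform $H^2$ bound does not control $\|\nabla u\|_{L^\infty}$ in three dimensions, so $\mathcal R$ and the cross-term errors cannot simply be dominated by smallness of the data. Here my plan follows He, Huang and Wang \cite{he-huang-wang}: I estimate each commutator by Gagliardo--Nirenberg interpolation so that every dangerous prefactor (such as $\|u\|_{L^\infty}$, $\|\nabla u\|_{L^3}$ or $\|\nabla a\|_{L^3}$) is split into a low-order norm, which is small for large time by the decay \eqref{decay} of Theorem \ref{Thm1}, times a higher-order norm controlled by \eqref{uniform-bound}, supplemented by the effective-viscous-flux and $C^\a$ density arguments that substitute for the missing $L^\infty$ control of $\nabla u$. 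Once every prefactor multiplying the dissipation is rendered small and absorbed into the constant $C$, summing the three identities collapses all remainders and yields $\frac{d}{dt}X_1+C(\|\nabla^2u\|_{L^2}^2+\|\nabla u\|_{L^2}^2+\|\nabla a\|_{L^2}^2+\|\nabla\dot u\|_{L^2}^2)\le0$, which is precisely \eqref{est2}.
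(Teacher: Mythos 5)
The paper offers no proof of this proposition --- it is imported wholesale from \cite{he-huang-wang} --- so the only question is whether your reconstruction would actually deliver \eqref{est2}. Your strategy is the right family of techniques (Hoff's material-derivative energy method, elliptic regularity for the Lam\'e operator to convert $\|\nabla^2u\|_{L^2}$ into $\|\dot u\|_{L^2}+\|\nabla a\|_{L^2}$ plus lower order, a cross term to extract density dissipation from the pressure gradient), but there is a structural gap: the functional $X_1=E_0+\kappa_1\int\rho|\dot u|^2dx+\kappa_2\int\rho u\cdot\nabla a\,dx$ cannot satisfy the required equivalence $X_1\sim\|u\|_{H^1}^2+\|a\|_{H^1}^2+\|\dot u\|_{L^2}^2$. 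Nothing in it is bounded below by $\|\nabla u\|_{L^2}^2$ or $\|\nabla a\|_{L^2}^2$: the cross term is indefinite and of size $\|u\|_{L^2}\|\nabla a\|_{L^2}$, and with $\kappa_2$ small it is absorbed by $E_0$ plus an arbitrarily small multiple of $\|\nabla a\|_{L^2}^2$, not generated. A state with small $\|(a,u,\dot u)\|_{L^2}$ but large $\|\nabla(a,u)\|_{L^2}$ makes your $X_1$ small while the right-hand side of the claimed equivalence is large. You are missing the first-order energy identities: testing the momentum equation itself against $\dot u$ (which places $\tfrac{\mu}{2}\|\nabla u\|_{L^2}^2$ in the functional and $\int\rho|\dot u|^2$ in the dissipation) and the $H^1$-level estimate of the system (which places $\|\nabla a\|_{L^2}^2$ in the functional); compare $\mathcal E_1^2$ in Lemma \ref{diss}, which carries exactly the gradient terms yours lacks. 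Without the lower bound, Proposition \ref{prop1} cannot convert decay of $X_1$ into decay of $\|a\|_{H^1}^2+\|u\|_{H^1}^2+\|\dot u\|_{L^2}^2$, which is the sole purpose of \eqref{est2}.

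The second problem is your closing mechanism: rendering the prefactors of the dissipation small ``for large time by the decay \eqref{decay}.'' Under the hypotheses of Theorem \ref{Thm1} the exponent $\beta(p)=\tfrac34(\tfrac2p-1)$ vanishes at $p=2$, so \eqref{decay} gives no smallness in general; moreover, the authors explicitly rely on the fact that \eqref{est2} ``has nothing to do with the initial data'' so that it survives when the $L^p$ hypothesis is replaced by $(a_0,u_0)\in\dot H^{-s}$, and they apply it from $t=0$ rather than from a large time. An argument that leans on \eqref{decay} would yield an inequality valid only for $t\ge T$ and contingent on the $L^p$ assumption --- exactly what must be avoided here. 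The genuinely hard point, absorbing the cubic commutators for large data using only \eqref{Lower}, \eqref{uniform-bound} and the $C^\alpha$ bound on $\rho$ (via the effective viscous flux), is gestured at but not carried out, and it is the actual content of the proof in \cite{he-huang-wang}.
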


\begin{prop}\label{prop1}
Under the assumptions of Theorem \ref{Thm2}, when $s\in(0,1/2]$, then for all $t\ge 0$, we have
 \begin{equation}\label{prop1est}
 \begin{aligned}
 &\|\G a(t)\|_{L^2}^2+ \|\G u(t)\|_{L^2}^2+ \|\G \dot u(t)\|_{L^2}^2\le C,\\&
 \|a(t)\|_{H^1}^2+\|u(t)\|_{H^1}^2+\|\dot u(t)\|_{L^2}^2\le C(1+t)^{-s},
 \end{aligned}
 \end{equation}
where $C$ is a constant independent of time.
\end{prop}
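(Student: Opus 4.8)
The plan is to treat the statement in two stages: first to show that the negative Sobolev norms $\|\G a\|_{L^2}$, $\|\G u\|_{L^2}$ and $\|\G\dot u\|_{L^2}$ stay bounded for all time, and then to insert these bounds into the dissipation inequality \eqref{est2} through the interpolation Lemma \ref{l1}, so as to obtain a closed ordinary differential inequality for $X_1(t)$ that forces the algebraic decay.

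For the uniform bound I would integrate \eqref{est11}. Setting $G(t):=\int(\gamma|\G a|^2+|\G(\rho u)|^2)\,dx$ and $g(t):=\|\nabla u\|_{H^1}^2+\|\nabla a\|_{H^1}^2$, and using $\|\G(\rho u)\|_{L^2}+\|\G a\|_{L^2}\lesssim\sqrt{G(t)}$ together with the uniform bound $\|\nabla\rho\|_{H^1}=\|\nabla a\|_{H^1}\le C$ from \eqref{uniform-bound}, the right-hand side of \eqref{est11} is at most $Cg(t)\sqrt{G(t)}+g(t)\|\nabla\G u\|_{L^2}+Cg(t)$. Absorbing $g(t)\|\nabla\G u\|_{L^2}$ into $C\|\nabla\G u\|_{L^2}^2$ by Young's inequality (the resulting $Cg(t)^2\le Cg(t)$ being harmless since $g$ is bounded) and discarding the remaining dissipation gives $\frac{d}{dt}G\lesssim g(t)(\sqrt{G}+1)$. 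The substitution $H:=\sqrt{G+1}$ turns this into $H'\lesssim g(t)$, and since $\int_0^\infty g\,d\tau\le C$ by \eqref{uniform-bound} while $G(0)<\infty$ (here $\G(a_0u_0)$ is controlled by Lemma \ref{l2} using $a_0,u_0\in\dot H^{-s}\cap H^2$), integration yields $G(t)\le C$, hence $\|\G a\|_{L^2},\|\G(\rho u)\|_{L^2}\le C$.

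The bound for $\|\G u\|_{L^2}$ then follows from $\G u=\G(\rho u)-\G(au)$ and a product estimate for $\|\G(au)\|_{L^2}$, and the bound for $\|\G\dot u\|_{L^2}$ from rewriting the momentum equation as $\rho\dot u=\mu\Delta u+(\mu+\lambda)\nabla\dive u-\nabla P$, i.e. $\dot u=\big(\mu\Delta u+(\mu+\lambda)\nabla\dive u-\nabla P\big)-a\dot u$ via $\rho^{-1}=1-a\rho^{-1}$: the linear viscous part contributes $\|\Lambda^{2-s}u\|_{L^2}$, the pressure part contributes $\|\Lambda^{1-s}a\|_{L^2}+\|\G\nabla H(\rho|1)\|_{L^2}$, and the nonlinear remainder is handled by Lemma \ref{l2} together with the uniform bound $\|\dot u\|_{L^2}\le C$. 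For the decay I would apply Lemma \ref{l1} in the form $\|f\|_{L^2}\le C\|\nabla f\|_{L^2}^{s/(1+s)}\|\G f\|_{L^2}^{1/(1+s)}$ to $u$, $a$ and $\dot u$; combined with the uniform negative-norm bounds this gives $\|u\|_{L^2}^2+\|a\|_{L^2}^2+\|\dot u\|_{L^2}^2\le C\,D(t)^{s/(1+s)}$, where $D(t):=\|\nabla^2u\|_{L^2}^2+\|\nabla u\|_{L^2}^2+\|\nabla a\|_{L^2}^2+\|\nabla\dot u\|_{L^2}^2$, while the highest-order pieces of $X_1\sim\|u\|_{H^1}^2+\|a\|_{H^1}^2+\|\dot u\|_{L^2}^2$ are bounded by $D(t)$ and hence, being uniformly bounded, by $D(t)^{s/(1+s)}$ as well. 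Thus $X_1\le C\,D^{s/(1+s)}$, so \eqref{est2} becomes $\frac{d}{dt}X_1+cX_1^{1+1/s}\le0$, whose integration produces $X_1(t)\le C(1+t)^{-s}$---exactly the claimed decay.

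I expect the main difficulty to lie in controlling the material derivative $\dot u$ in the negative Sobolev norm: unlike $a$ and $\rho u$, it obeys no transport equation to which $\G$ can be applied directly, so its negative norm must be extracted algebraically from the momentum equation. The delicate point is that the intermediate orders $\Lambda^{2-s}u$ and $\Lambda^{1-s}a$ are finite only because $s<3/2$ places them strictly between the controlled endpoints $\G(\cdot)$ and $\nabla^2(\cdot)$, and the product term $\G(a\dot u)$ forces Lemma \ref{l2} with the restriction $1/2+s/3<1$, which is precisely where the range $s\in(0,1/2]$ enters. A secondary subtlety is that the Gronwall step closes only because \eqref{est11} produces $\sqrt{G}$ rather than $G$ on the right, so that the mere time-integrability of $g$---and not any smallness---suffices.
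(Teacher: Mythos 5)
Your proposal is correct and follows essentially the same route as the paper's proof: integrate \eqref{est11} against the time-integrable dissipation from \eqref{uniform-bound} to propagate $\|\G a\|_{L^2}+\|\G(\rho u)\|_{L^2}$, extract $\|\G u\|_{L^2}$ and $\|\G\dot u\|_{L^2}$ algebraically from the momentum equation, and then combine Lemma \ref{l1} with Proposition \ref{pro1} to obtain $\frac{d}{dt}X_1+cX_1^{1+1/s}\le 0$ and hence the $(1+t)^{-s}$ decay. The only divergences are cosmetic --- your Gronwall substitution versus the paper's sup-and-absorb step, and your decompositions $u=\rho u-au$ and $\dot u=F-a\dot u$ handled by the product estimate of Lemma \ref{l2}, in place of the paper's informal pointwise Fourier comparisons such as $\widehat{\rho u}\sim\hat u$ --- and if anything your version is the more careful way to write those steps.
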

\begin{proof}
Integrating \eqref{est11} over $[0,t)$ and using Cauchy inequality, we get
\begin{equation}\label{331}
\begin{aligned}
\frac12&\sup_{\tau\in[0,t)}(\|\G a(\tau)\|^2_{L^2}+\|\G(\rho u)(\tau)\|^2_{L^2}) +\frac C2\int_0^t\|\nabla\G u\|^2_{L^2}d\tau\\&
\le \frac12(\|\G a_0\|^2_{L^2}+\|\G(\rho_0u_0)\|^2_{L^2})\\&
\quad +\sup_{\tau\in[0,t)}(\|\G a(\tau)\|_{L^2}+\|\G(\rho u)(\tau)\|_{L^2})\int_0^t(\|\nabla u\|^2_{H^1}+\|\nabla a\|^2_{H^1})d\tau\\&
\quad +\epsilon(\|\nabla u\|_{L^\infty_\tau(H^1)}^2+\|\nabla a\|_{L^\infty_\tau(H^1)}^2)\int_0^t \|\G\nabla u\|_{L^2}^2d\tau\\&
\quad +C_\epsilon\int_0^t(\|\nabla u\|_{H^1}^2+\|\nabla a\|_{H^1}^2)d\tau+\|\nabla\rho\|_{L^\infty_\tau(H^1)}\int_0^t\|\nabla u\|_{H^1}^2d\tau.
\end{aligned}
\end{equation}
Notice that $\underline \rho\le\rho_0\le M$, and using Hausdorff-Young inequality \eqref{hausdorff}, we have
\begin{equation*}
\begin{aligned}
\|\rho_0u_0\|_{\dot H^{-s}}=\||\xi|^{-s}\widehat{\rho_0u_0}\|_{L^2}&\le \||\xi|^{-s}\hat\rho_0*\hat u_0\|_{L^2}\le\|\hat\rho_0\|_{L^1}\||\xi|^{-s}\hat u_0\|_{L^2}
\le \|\rho_0\|_{L^\infty}\|u_0\|_{\dot H^{-s}}
\le C\|u_0\|_{\dot H^{-s}},
\end{aligned}
\end{equation*}
this together with $u_0\in\dot H^{-s}$ implies $\|\G(\rho_0 u_0)\|_{L^2}\le C$.
Then according to the uniform bound \eqref{uniform-bound}
and choosing $\epsilon$ small enough in inequality \eqref{331}, we obtain
\begin{equation}\label{propa}
\|\G a(t)\|^2_{L^2}+\|\G(\rho u)\|^2_{L^2}\le C, \quad s\in(0,1/2].
\end{equation}
Since $\underline\rho\le\rho\le M$, it is easy to check $\widehat{\rho u}\sim \hat u$, hence we have
\begin{equation}\label{propu}
\|\G u(t)\|_{L^2}^2\le \|\G(\rho u)(t)\|_{L^2}^2\le C.
\end{equation}

Next, let us prove $\|\G\dot u(t)\|\le C$.
We have known $u\in \dot H^{-s}\cap H^2$, thus we have $u\in \dot H^{2-s}$ by Sobolev interpolation inequality.  Then we have
\begin{equation}\label{prop1est1}
\||\xi|^{-s}\widehat {\Delta u}\|_{L^2}\le \||\xi|^{2-s}\hat u\|_{L^2}\le \|u\|_{\dot H^{2-s}}\le C,
\end{equation}
which implies
\begin{equation}\label{lapu}
\Delta u\in \dot H^{-s}.
\end{equation}

Similarly, we have $a\in\dot H^{1-s}$.  In follows, we set ${\bm a}\overset{\Delta}{=} \rho^\gamma-1$. Observing thet ${\bm a}=(\int_0^1\gamma(t\rho+(1-t))^{\gamma-1}dt)a$.  Since $\underline \rho\le \rho\le M$, it is easy to see
 \begin{equation*}
 \bm a\le \gamma(M+1)^{\gamma-1}a,
 \end{equation*}
 which implies $\hat{\bm a}\lesssim \hat a$. Thus we have
\begin{equation*}
\||\xi|^{-s}\widehat {\nabla{\bm a}}\|_{L^2}\le \||\xi|^{1-s}\hat{\bm a}\|_{L^2}\le C \||\xi|^{1-s}\hat a\|_{L^2}\le C\|a\|_{\dot H^{1-s}}\le C,
\end{equation*}
which means
\begin{equation}\label{aa}
\nabla {\bm a}\in \dot H^{-s}.
\end{equation}
Since $\rho\dot u+\nabla {\bm a}=\mu\Delta u+(\lambda+\mu)\nabla\dive u$, combing \eqref{lapu} and \eqref{aa}, we have $\rho\dot u\in \dot H^{-s}$. Using  $\underline\rho\le\rho\le M$ again, we deduce that
\begin{equation*}
\dot u\in \dot H^{-s}.
\end{equation*}
So far, we have
\begin{equation}\label{dotu}
\|\G a(t)\|_{L^2}+\|\G u(t)\|_{L^2}+\|\G\dot u(t)\|_{L^2}\le C, \quad s\in(0,1/2].
\end{equation}
According to \eqref{a.2}, we have
\begin{equation}\label{001}
\|\nabla^{l+1}f\|_{L^2}\ge C\|\G f\|_{L^2}^{-\frac{1}{l+s}}\|\nabla^lf\|_{L^2}^{1+\frac{1}{l+s}}.
\end{equation}
By this fact and the uniform bound \eqref{uniform-bound} and \eqref{dotu}, we deduce
\begin{equation*}
\begin{aligned}
\|\nabla a\|_{L^2}^2+\|\nabla u\|_{L^2}^2+\|\nabla^2u\|_{L^2}^2+\|\nabla \dot u\|_{L^2}^2
\ge C(\|a\|_{H^1}^2+\|u\|_{H^1}^2+\|\dot u\|_{L^2}^2)^{1+\frac1s}.
\end{aligned}
\end{equation*}
Thus, thanks to Proposition \ref{pro1} and notice that the estimate \eqref{est2} has nothing to do with the initial data, we obtain
\begin{equation*}
\frac{d}{dt}(\|a\|_{H^1}^2+\|u\|_{H^1}^2+\|\dot u\|_{L^2}^2)+C(\|a\|_{H^1}^2+\|u\|_{H^1}^2+\|\dot u\|_{L^2}^2)^{1+\frac1s}\le 0.
\end{equation*}
Solving this inequality directly, and using \eqref{uniform-bound}, we get
\begin{equation*}
\|a(t)\|_{H^1}^2+\|u(t)\|_{H^1}^2+\|\dot u(t)\|_{L^2}^2\le C(1+t)^{-s},
\end{equation*}
which completes the proof.
\end{proof}

Before deriving the propagation of the negative Sobolev norms of the solution for $s\in(1/2,3/2)$, we need to improve the decay estimate of the first order spatial derivative of the solution for $s\in(0,1/2]$.
And hence, we need to introduce the following lemma.
\begin{lemm}\label{diss}
Under the assumptions of Theorem \ref{Thm2}, and $s\in(0,1/2]$, we define
\begin{equation*}
\mathcal{E}^2_1(t)\stackrel{\Delta}{=}\|\nabla u\|^2_{H^1}+P'(1)\|\nabla a\|^2_{H^1}+2\delta_0\int\nabla u\cdot\nabla^2a dx.
\end{equation*}
Then there exists a large time $T_1$, such that
\begin{equation}\label{1111}
\frac{d}{dt}\mathcal{E}^2_1(t)+c_* (\|\nabla^2 u\|_{H^1}^2+\|\nabla^2 a\|_{L^2}^2) \lm 0
\end{equation}
holds on for all $t\ge T_1$.
Here $c_*=\min{\{\mu,\delta_0P'(1)}\}$, and $\delta_0$ is a small constant.
\end{lemm}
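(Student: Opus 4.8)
The plan is to assemble the Lyapunov functional $\mathcal{E}_1^2(t)$ from three ingredients and to let the large time $T_1$ do the work of killing the nonlinear terms. The three ingredients are: the first- and second-order energy estimates for $(a,u)$, which produce only the velocity dissipation $\mu\|\nabla^2u\|_{H^1}^2$; the cross interaction term $\int\nabla u\cdot\nabla^2a\,dx$ weighted by the small constant $\delta_0$, which is the \emph{only} source of density dissipation $\|\nabla^2 a\|_{L^2}^2$; and the time-decay $\|a(t)\|_{H^1}^2+\|u(t)\|_{H^1}^2\lm C(1+t)^{-s}$ already established in Proposition \ref{prop1}, which is what forces every nonlinear prefactor to become small after a large time.

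First I would rewrite the momentum equation of \eqref{cns1} in nonconservative form $\rho\dot u-\mu\Delta u-(\mu+\lambda)\nabla\dive u+\gamma\nabla a=-(\gamma-1)\nabla H(\rho|1)$ with $\dot u=\partial_t u+u\cdot\nabla u$, apply $\nabla^k$ for $k=1,2$, and test against $\nabla^k u$, while applying $\nabla^k$ to $\partial_t a+\dive u=-\dive(au)$ and testing against $P'(1)\nabla^k a=\gamma\nabla^k a$. Summing over $k$, the linear coupling of $\gamma\nabla a$ against $\dive u$ cancels and I obtain
\begin{equation*}
\frac12\frac{d}{dt}\left(\|\nabla u\|_{H^1}^2+P'(1)\|\nabla a\|_{H^1}^2\right)+\mu\|\nabla^2u\|_{H^1}^2+(\mu+\lambda)\|\nabla\dive u\|_{H^1}^2\lm \mathcal N_1(t),
\end{equation*}
where $\mathcal N_1$ collects the nonlinearities from $\dive(au)$, $u\cdot\nabla u$, the variable coefficient $a\dot u$, and the pressure remainder $\nabla H(\rho|1)$. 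This contains no density dissipation, so the second step is the cross term: differentiating $\int\nabla u\cdot\nabla^2a\,dx$ in time and substituting $\partial_t u$ and $\partial_t a$, the leading part $-\gamma\nabla a$ of the momentum equation yields $-\gamma\int\partial_{ij}a\,\partial_{ij}a\,dx=-P'(1)\|\nabla^2a\|_{L^2}^2$, while $\partial_t a\approx-\dive u$ and the viscous terms contribute only quantities bounded (after one integration by parts) by $\|\nabla^2u\|_{H^1}^2$ plus $\eta\|\nabla^2a\|_{L^2}^2$ for arbitrarily small $\eta$. Thus
\begin{equation*}
\frac{d}{dt}\int\nabla u\cdot\nabla^2a\,dx+P'(1)\|\nabla^2a\|_{L^2}^2\lm C\|\nabla^2u\|_{H^1}^2+\mathcal N_2(t).
\end{equation*}

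Multiplying this by $2\delta_0$ and adding to the energy estimate, the constant in front of $\|\nabla^2u\|_{H^1}^2$ is multiplied by $\delta_0$, so choosing $\delta_0$ small (and discarding the nonnegative $(\mu+\lambda)$-term, noting $\mu+\lambda\ge\tfrac13\mu>0$ under the hypotheses) I keep at least $\mu\|\nabla^2u\|_{H^1}^2$ and gain the density dissipation $\delta_0 P'(1)\|\nabla^2a\|_{L^2}^2$. The same smallness of $\delta_0$ makes $\left|2\delta_0\int\nabla u\cdot\nabla^2a\,dx\right|\lm\delta_0\|\nabla(a,u)\|_{H^1}^2$ a lower-order perturbation, which gives the stated equivalence $\mathcal E_1^2(t)\sim\|\nabla(a,u)\|_{H^1}^2$.

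The main obstacle is the control of $\mathcal N_1+\delta_0\mathcal N_2$, and this is exactly where the large time $T_1$ enters. Each nonlinear term can be written, after Sobolev and Gagliardo--Nirenberg interpolation (see the Appendix), as a product of a low-order factor such as $\|u\|_{L^\infty}$, $\|a\|_{L^\infty}$, $\|\nabla(a,u)\|_{L^3}$ or $\|\nabla u\|_{L^\infty}$ with the dissipative norms $\|\nabla^2u\|_{H^1}$ and $\|\nabla^2a\|_{L^2}$. Combining the uniform-in-time bound \eqref{uniform-bound} on $\|\nabla^2(a,u)\|_{L^2}$ with the decay $\|a(t)\|_{H^1}^2+\|u(t)\|_{H^1}^2\lm C(1+t)^{-s}$ of Proposition \ref{prop1}, every such prefactor is controlled by $C(1+t)^{-\kappa}$ for some $\kappa>0$ and hence tends to $0$; the essential structural fact is that each quadratic nonlinearity of \eqref{cns1} carries at least one genuinely decaying factor, so no prefactor can stay bounded below. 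Consequently there exists a large $T_1$ such that for $t\ge T_1$ all these prefactors are smaller than $\tfrac12\min\{\mu,\delta_0 P'(1)\}$, the term $\mathcal N_1+\delta_0\mathcal N_2$ is absorbed into the left-hand dissipation, and the surviving terms give \eqref{1111} with $c_*=\min\{\mu,\delta_0 P'(1)\}$. The one point demanding care is that the density has no intrinsic diffusion, so $\nabla^2a$ is the highest density norm one can possibly dissipate, which is why the statement controls only $\|\nabla^2a\|_{L^2}^2$ rather than $\|\nabla^2a\|_{H^1}^2$.
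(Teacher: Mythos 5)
Your proposal is correct and follows essentially the same route as the paper: the paper simply imports the differential inequality you sketch (energy estimates for $\nabla^k(a,u)$ plus the $\delta_0$-weighted cross term yielding $\delta_0 P'(1)\|\nabla^2a\|_{L^2}^2$) from Lemma 2.4 of \cite{gao-wei-yao}, and then, exactly as you do, uses the decay $\|a\|_{H^1}^2+\|u\|_{H^1}^2\lm C(1+t)^{-s}$ from Proposition \ref{prop1} to make the nonlinear prefactor smaller than $\tfrac14\min\{\mu,\delta_0P'(1)\}$ for $t\ge T_1$ and absorb it into the dissipation. The only difference is presentational: you rederive the energy inequality while the paper cites it.
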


\begin{proof}
To obtain \eqref{1111}, we need to introduce the following energy estimate coming from Lemma 2.4 in \cite{gao-wei-yao}
\begin{equation*}
\begin{split}
&\frac{d}{dt}\left\{\frac12\|\nabla u\|_{H^1}^2+\frac{P'(1)}{2}\|\nabla a\|_{H^1}^2
  +\delta_0 \int\nabla u\cdot\nabla^2a dx \right\}
+\frac{3\mu}{4}\|\nabla^2 u\|_{H^1}^2 +\frac{3\delta_0}{4}P'(1)\|\nabla^2 a\|_{L^2}^2\\
&\lm C(\|a\|^{\frac14}_{L^2}+\|u\|^{\frac14}_{L^2}+\|\nabla u\|^{\frac14}_{L^2}+\|a \|_{H^1}+\|u \|_{H^1})
       (\|\nabla^2 u\|^2_{H^1}+\|\nabla^2 a\|^2_{L^2}).
\end{split}
\end{equation*}
According to the decay result \eqref{prop1est} obtained in Proposition \ref{prop1}, one may conclude that
\begin{equation*}
\|a\|^{\frac14}_{L^2}+\|u\|^{\frac14}_{L^2}+\|\nabla u\|^{\frac14}_{L^2}+\|a \|_{H^1}+\|u \|_{H^1}\le C(1+t)^{-\frac s8},
\end{equation*}
and thus, there exists a large time $T_1>0$ such that
\begin{equation*}
\|a\|^{\frac14}_{L^2}+\|u\|^{\frac14}_{L^2}+\|\nabla u\|^{\frac14}_{L^2}+\|a \|_{H^1}+\|u \|_{H^1}\le \frac14\min\{\mu,\delta_0P'(1)\}
\end{equation*}
holds on for all $t\ge T_1$. Therefore, we obtain
\begin{equation*}
\frac{d}{dt}\{\|\nabla u\|_{H^1}^2+P'(1)\|\nabla a\|_{H^1}^2
  +2\delta_0\int\nabla u\cdot\nabla^2a dx \}+\mu\|\nabla^2 u\|_{H^1}^2 +\delta_0P'(1)\|\nabla^2 a\|_{L^2}^2\le 0.
\end{equation*}
Taking $c_*=\min{\{\mu,\delta_0P'(1)}\}$, it holds on
\begin{equation*}
\frac{d}{dt}\mathcal{E}^2_1(t)+c_* (\|\nabla^2 u\|_{H^1}^2+\|\nabla^2 a\|_{L^2}^2) \lm 0.
\end{equation*}
By virtue of the smallness of $\delta_0$, there exist two constants $c_3$ and $C_3$(independent of time), such that
\begin{equation}\label{E1}
c_3(\|\nabla u\|^2_{H^1}+\|\nabla a\|^2_{H^1})\lm\mathcal{E}^2_1(t)\lm C_3(\|\nabla u\|^2_{H^1}+\|\nabla a\|^2_{H^1}).
\end{equation}
Therefore, we complete the proof of this lemma.
\end{proof}

With Lemma \ref{diss} at hand, we can improve the decay estimate of the first derivative of the solution, and also establish the decay estimate of the second derivative of the solution when $s\in(0,1/2]$.

\begin{prop}\label{prop5}
Under the assumptions of Theorem \ref{Thm2}, and $s\in(0,1/2]$, then, it holds on for all $t\ge T_1$
\begin{equation}\label{hidecay}
\|\nabla u(t)\|_{H^1}^2+\|\nabla a(t)\|_{H^1}^2\le C(1+t)^{-(1+s)},
\end{equation}
where $C$ is a constant independent of time, and $T_1$ is given in Lemma \ref{diss}.
\end{prop}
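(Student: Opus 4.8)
The goal is to upgrade the decay rate of $\|\nabla(a,u)\|_{H^1}^2$ from the $(1+t)^{-s}$ rate (which follows from Proposition \ref{prop1}) to the improved rate $(1+t)^{-(1+s)}$ valid for large time $t \ge T_1$. The essential engine is the differential inequality \eqref{1111} from Lemma \ref{diss}, namely
\[
\frac{d}{dt}\mathcal{E}_1^2(t) + c_*(\|\nabla^2 u\|_{H^1}^2 + \|\nabla^2 a\|_{L^2}^2) \le 0, \qquad t \ge T_1,
\]
together with the equivalence \eqref{E1} between $\mathcal{E}_1^2(t)$ and $\|\nabla(a,u)\|_{H^1}^2$.

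Let me sketch the plan.

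\textbf{The approach.} The strategy is the Fourier-splitting/interpolation method: produce a feedback inequality in which the dissipation term $\|\nabla^2(a,u)\|^2$ on the left is shown to control $\mathcal{E}_1^2(t)$ itself up to a correction, so that $\mathcal{E}_1^2(t)$ obeys an ODE forcing the faster decay. First I would use the negative Sobolev bound $\|\Lambda^{-s}(a,u)\|_{L^2} \le C$ (established in Proposition \ref{prop1}, and which is \emph{preserved} in time) to run an interpolation of the form \eqref{001},
\[
\|\nabla^{l+1} f\|_{L^2} \ge C \|\Lambda^{-s} f\|_{L^2}^{-\frac{1}{l+s}} \|\nabla^l f\|_{L^2}^{1+\frac{1}{l+s}},
\]
now applied with $l=1$ rather than $l=0$. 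This interpolation converts the second-order dissipation $\|\nabla^2(a,u)\|_{L^2}^2$ into a power $(\|\nabla(a,u)\|_{L^2}^2)^{1+\frac{1}{1+s}}$ of the first-order energy, using the uniform-in-time boundedness of $\|\Lambda^{-s}(a,u)\|_{L^2}$ as the interpolation anchor.

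\textbf{Key steps in order.} (i) Apply \eqref{001} with $l=1$ to each of $a$ and $u$, obtaining $\|\nabla^2(a,u)\|_{L^2}^2 \gtrsim (\|\nabla(a,u)\|_{L^2}^2)^{1+\frac{1}{1+s}}$ with the implied constant controlled by the uniform bound on $\|\Lambda^{-s}(a,u)\|_{L^2}$. (ii) Combine this lower bound with \eqref{1111} and the equivalence \eqref{E1} to derive the closed differential inequality
\[
\frac{d}{dt}\mathcal{E}_1^2(t) + C\bigl(\mathcal{E}_1^2(t)\bigr)^{1+\frac{1}{1+s}} \le 0, \qquad t \ge T_1.
\]
(iii) Integrate this Bernoulli-type ODE directly. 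Solving $y' + Cy^{1+1/(1+s)} \le 0$ gives $y(t) \le C(1+t)^{-(1+s)}$, where the exponent arises precisely because $1/\bigl(\tfrac{1}{1+s}\bigr) = 1+s$; here one shifts the time origin to $T_1$ and absorbs the bounded initial value $\mathcal{E}_1^2(T_1)$ into the constant. (iv) Translate back through \eqref{E1} to conclude $\|\nabla(a,u)(t)\|_{H^1}^2 \le C(1+t)^{-(1+s)}$ for $t \ge T_1$, which is the assertion.

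\textbf{Main obstacle.} The delicate point is step (i): the interpolation \eqref{001} only applies to the \emph{full} second-order term $\|\nabla^2(a,u)\|_{L^2}^2$, whereas the dissipation in \eqref{1111} is $\|\nabla^2 u\|_{H^1}^2 + \|\nabla^2 a\|_{L^2}^2$; I must check that the available dissipation genuinely controls $\|\nabla^2 a\|_{L^2}^2$ and $\|\nabla^2 u\|_{L^2}^2$ (it does, since $\|\nabla^2 u\|_{H^1}^2 \ge \|\nabla^2 u\|_{L^2}^2$), and that no derivative of $a$ beyond second order is needed. A secondary subtlety is ensuring the interpolation constant stays uniform in time, which is exactly why the preserved negative-norm bound from Proposition \ref{prop1}, rather than a decaying one, is indispensable. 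Once these are in place the ODE integration is routine.
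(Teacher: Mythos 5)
Your proposal is correct and follows essentially the same route as the paper: the interpolation \eqref{001} (anchored by the time-preserved bound on $\|\Lambda^{-s}(a,u)\|_{L^2}$ from Proposition \ref{prop1}) together with the uniform bound \eqref{uniform-bound} yields $\|\nabla^2 u\|_{H^1}^2+\|\nabla^2 a\|_{L^2}^2\ge C(\|\nabla u\|_{H^1}^2+\|\nabla a\|_{H^1}^2)^{1+\frac{1}{1+s}}$, which combined with \eqref{1111} and \eqref{E1} gives the Bernoulli inequality $\frac{d}{dt}\mathcal{E}_1^2+C(\mathcal{E}_1^2)^{1+\frac{1}{1+s}}\le 0$ and hence the stated decay. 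Your "main obstacle" is resolved exactly as you suspect: the extra powers of $\|\nabla^2(a,u)\|_{L^2}^2$ appearing on the right-hand side are absorbed using the uniform-in-time bound \eqref{uniform-bound}.
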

\begin{proof}
According to \eqref{a.2}, it is easy to check that
\begin{equation*}
\|\nabla^{l+1}f\|_{L^2}\ge C\|\G f\|_{L^2}^{-\frac{1}{l+s}}\|\nabla^lf\|_{L^2}^{1+\frac{1}{l+s}},
\end{equation*}
this together with \eqref{uniform-bound} and \eqref{dotu} gives rise to
\begin{equation*}
\|\nabla^2 u\|^2_{H^1}+\|\nabla^2 a\|^2_{L^2}\ge C(\|\nabla u\|^2_{H^1}+\|\nabla a\|^2_{H^1})^{1+\frac{1}{1+s}}.
\end{equation*}
By Lemma \ref{diss} and the equivalent condition \eqref{E1}, we have
\begin{equation*}
\frac{d}{dt}\mathcal{E}^2_1(t)+C(\mathcal{E}^2_1(t))^{1+\frac{1}{1+s}}\le 0 \quad for \quad t\ge T_1.
\end{equation*}
Solving this inequality directly and using \eqref{uniform-bound}, it holds on
\begin{equation}\label{decay2}
\mathcal{E}^2_1(t)\le \left(\mathcal{E}^2_1(T_1)^{-\frac{2}{1+s}}+\frac{C(t-T_1)}{1+s}\right)^{-(1+s)}\le C(1+t)^{-(1+s)}, \quad t\ge T_1.
\end{equation}
Consequently, the combination of the equivalent condition \eqref{E1} and \eqref{decay2}, yields that for $s\in(0,1/2]$,
\begin{equation*}
\|\nabla u(t)\|_{H^1}^2+\|\nabla a(t)\|_{H^1}^2\le C(1+t)^{-(1+s)}, \quad t\ge T_1,
\end{equation*}
which completes the proof.
\end{proof}

Now we are in the position to establish the decay estimate for the solution of \eqref{cns} with initial data $(a_0,u_0)\in \dot H^{-s}(s\in(1/2,3/2))$.

\begin{prop}\label{prop6}
Under the assumptions of Theorem \ref{Thm2}, when $s\in(1/2,3/2)$, then for all $t\ge0$, we have
\begin{equation*}
\|\G a(t)\|_{L^2}^2+ \|\G u(t)\|_{L^2}^2+ \|\G \dot u(t)\|_{L^2}^2\le C,
\end{equation*}
and
  \begin{equation*}
 \|a(t)\|^2_{H^1}+\|u(t)\|^2_{H^1}+\|\dot u(t)\|_{L^2}^2\le C(1+t)^{-s}.
  \end{equation*}
 For $t\ge T_1$, there holds 
  \begin{equation*}
 \|\nabla a(t)\|_{H^1}^2+\|\nabla u(t)\|_{H^1}^2\le C(1+t)^{-(1+s)},
 \end{equation*}
 where $T_1$ is a fixed large time given in Lemma \ref{diss}, and $C$ is a constant independent of time.
 \end{prop}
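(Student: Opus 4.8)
The plan is to mirror the proofs of Propositions \ref{prop1} and \ref{prop5}, the essential new feature being that the nonlinear estimate \eqref{est12} can no longer be integrated in time by appealing to the uniform bound \eqref{uniform-bound} alone; its right-hand side carries the dissipation norms $\|(a,u)\|_{L^2}$ and $\|(\nabla a,\nabla u)\|_{L^2}$ to fractional powers, so their time integrability must be extracted from decay rates established in advance. The key preliminary observation is that, since $(a_0,u_0)\in\dot H^{-s}\cap L^2$ with $s\in(1/2,3/2)$, interpolation gives $(a_0,u_0)\in\dot H^{-1/2}$; hence Proposition \ref{prop1} and Proposition \ref{prop5} applied with index $1/2$ furnish the baseline rates
$$\|(a,u)(t)\|_{L^2}\lm C(1+t)^{-1/4},\qquad \|(\nabla a,\nabla u)(t)\|_{L^2}\lm C(1+t)^{-3/4}\quad(t\ge T_1),$$
the improved first-order rate being indispensable below.

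Next I would integrate \eqref{est12} over $[0,t)$ and set $N(t):=\sup_{\tau\in[0,t)}(\|\G a(\tau)\|_{L^2}+\|\G(\rho u)(\tau)\|_{L^2})$, exactly as in \eqref{331}. Every term carrying $\|\nabla\G u\|_{L^2}$ is split by Young's inequality so that $\epsilon\int_0^t\|\nabla\G u\|_{L^2}^2\,d\tau$ is absorbed into the dissipation on the left of \eqref{est12}, leaving pure time integrals of the baseline norms. Inserting the baseline rates, each such integrand decays faster than $(1+t)^{-1}$ precisely when $s<3/2$: the leading term $\int_0^t\|(a,u)\|_{L^2}^{s-1/2}\|(\nabla a,\nabla u)\|_{L^2}^{5/2-s}\,d\tau$ has integrand $\sim(1+t)^{-(7/4-s/2)}$, whose exponent exceeds $1$ iff $s<3/2$, while the remaining terms decay even faster; the contributions on $[0,T_1]$ are finite by \eqref{uniform-bound}. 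This yields $N(t)^2\lm C+C\,N(t)$, hence $N(t)\lm C$, and therefore $\|\G a\|_{L^2}+\|\G u\|_{L^2}\lm C$ via $\widehat{\rho u}\sim\hat u$. The bound $\|\G\dot u\|_{L^2}\lm C$ follows as in Proposition \ref{prop1}: $u\in\dot H^{-s}\cap H^2$ gives $\Delta u\in\dot H^{-s}$, $a\in\dot H^{-s}\cap H^2$ gives $\nabla{\bm a}\in\dot H^{-s}$, and then $\rho\dot u=\mu\Delta u+(\lambda+\mu)\nabla\dive u-\nabla{\bm a}\in\dot H^{-s}$.

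With the negative Sobolev norms now preserved, the last two conclusions are obtained exactly as before. The interpolation inequality \eqref{a.2} together with \eqref{uniform-bound} and the just-proved bounds gives
$$\|\nabla^2u\|_{L^2}^2+\|\nabla u\|_{L^2}^2+\|\nabla a\|_{L^2}^2+\|\nabla\dot u\|_{L^2}^2\ge C\big(\|a\|_{H^1}^2+\|u\|_{H^1}^2+\|\dot u\|_{L^2}^2\big)^{1+\frac1s},$$
so Proposition \ref{pro1} produces an ODE inequality whose solution is $\|a\|_{H^1}^2+\|u\|_{H^1}^2+\|\dot u\|_{L^2}^2\lm C(1+t)^{-s}$. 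Feeding this $s$-rate into the energy identity underlying Lemma \ref{diss} (which is itself $s$-independent and only needs smallness of the solution at large time) and repeating the argument of Proposition \ref{prop5} then gives $\|\nabla a\|_{H^1}^2+\|\nabla u\|_{H^1}^2\lm C(1+t)^{-(1+s)}$ for $t\ge T_1$.

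I expect the main obstacle to be the borderline term displayed above. With only the crude bound $\|(\nabla a,\nabla u)\|_{L^2}\lm(1+t)^{-1/4}$ its integrand would decay merely like $(1+t)^{-1/2}$ and the time integral would diverge; it is exactly the improved decay $(1+t)^{-3/4}$ from Proposition \ref{prop5} that restores integrability. This same computation pins down the ceiling $s<3/2$, since at $s=3/2$ the exponent $7/4-s/2$ falls to exactly $1$.
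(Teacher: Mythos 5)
Your proposal is correct and follows essentially the same route as the paper: reduce to the already-proved case $s=1/2$ via $\dot H^{-s}\cap L^2\subset\dot H^{-1/2}$ to obtain the baseline rates $(1+t)^{-1/4}$ and (crucially, from Proposition \ref{prop5}) $(1+t)^{-3/4}$, feed these into \eqref{est12}, absorb the $\|\nabla\G u\|_{L^2}$ terms into the dissipation, and check that the resulting time integrals converge precisely for $s<3/2$ — your exponent $7/4-s/2$ matches the paper's $(1+\tau)^{\frac{s}{2}-\frac74}$. The remaining steps (boundedness of $\|\G\dot u\|_{L^2}$, the $(1+t)^{-s}$ rate via Proposition \ref{pro1}, and the $(1+t)^{-(1+s)}$ rate via the Lemma \ref{diss} argument) are carried out exactly as in the paper.
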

 \begin{proof}
 Notice that $a_0, u_0\in\dot H^{-s}\cap L^2 \subset\dot H^{-1/2}$ when $s\in(1/2,3/2)$. Then we derive from what we have proved in \eqref{prop1est} and \eqref{hidecay} with $s=1/2$ that the following decay results:
 \begin{equation}\label{decay3}
 \begin{aligned}
 &\|a(t)\|_{L^2}+\|u(t)\|_{L^2}\le C(1+t)^{-\frac14}, \quad t\ge0,\\&
 \|\nabla a(t)\|_{H^1}+\|\nabla u(t)\|_{H^1}\le C(1+t)^{-\frac34}, \quad t\ge T_1.
 \end{aligned}
 \end{equation}
 Substituting \eqref{decay3} into \eqref{est12}, it is easy to derive that for $s\in(1/2,3/2)$,
 \begin{equation*}
 \begin{aligned}
 &\sup_{\tau\in[0,t]}(\|\G a(\tau)\|^2_{L^2}+\|\G(\rho u)(\tau)\|^2_{L^2})+\int_0^t\|\nabla\G u(\tau)\|^2_{L^2}d\tau\\&
 \quad \lesssim \|\G a_0\|_{L^2}^2+\|\G(\rho_0u_0)\|_{L^2}^2\\&
 \qquad +\int_0^{T_1}(1+\tau)^{-\frac12}(\|\G a\|_{L^2}+\|\G(\rho u)\|_{L^2}+\|\nabla \G u\|_{L^2})d\tau+\int_0^{T_1}(1+\tau)^{-1}d\tau\\&
 \qquad +\int_{T_1}^t(1+\tau)^{\frac12s-\frac74}(\|\G a\|_{L^2}+\|\G(\rho u)\|_{L^2})d\tau+\int_{T_1}^t(1+\tau)^{\frac12s-\frac54}\|\nabla\G u\|_{L^2}d\tau\\&
\qquad +\int_{T_1}^t(1+\tau)^{s-3}d\tau.
 \end{aligned}
 \end{equation*}
 Notice that we can fix the value of $T_1$ according to Lemma \ref{diss}, then direct calculation derives that for all $t\ge0$, there holds
 \begin{equation*}
 \|\G a(t)\|_{L^2}^2+\|\G(\rho u)(t)\|_{L^2}^2\le C \quad for \quad s\in(1/2,3/2).
 \end{equation*}
 Then repeating the progress in the proof of Proposition \ref{prop1} for $s\in(1/2,3/2)$, we obtain that for all $t\ge0$
 \begin{equation*}
 \|\G a(t)\|_{L^2}^2+ \|\G u(t)\|_{L^2}^2+ \|\G \dot u(t)\|_{L^2}^2\le C, \quad s\in(1/2,3/2),
 \end{equation*}
 and
 \begin{equation*}
 \|a(t)\|_{H^1}^2+\|u(t)\|_{H^1}^2+\|\dot u(t)\|_{L^2}^2\le C(1+t)^{-s}, \quad s\in(1/2,3/2).
 \end{equation*}
 Similar to Proposition \ref{prop5} for $s\in(1/2,3/2)$, it is easy to derive that for all $t\ge T_1$
 \begin{equation*}
 \|\nabla a(t)\|_{H^1}^2+\|\nabla u(t)\|_{H^1}^2\le C(1+t)^{-(1+s)}, \quad s\in(1/2, 3/2).
 \end{equation*}
The combination of above two estimates completes the proof.
 \end{proof}

\textbf{Proof of Theorem \ref{Thm2}:}
With the uniform bounds and decay estimates stated in Propositions \ref{prop1}, \ref{prop5} and \ref{prop6},
we can complete the the proof of Theorem \ref{Thm2}.

\section{The proof of the main results for FCNS}\label{4}

Denoting $a:=\rho-1$, $\theta:=T-1$, we rewrite \eqref{fullcns0} in the perturbation form as follows
\begin{equation}\label{fullcns}
  	\left\{\begin{aligned}
 &\partial_ta+\dive u =S_1,\\
 &\partial_tu-\mu\Delta u-(\mu+\lambda)\nabla\dive u+\nabla a+\nabla\theta=S_2,\\
 &\partial_t\theta-\Delta\theta+\dive u=S_3,
  	\end{aligned}\right.
  \end{equation}
where the nonlinear terms $S_1$, $S_2$ and $S_3$ are defined by
\begin{equation}\label{S}
  	\left\{\begin{aligned}
 &S_1:=-a\dive u-u\cdot\nabla a,\\
 &S_2:=-u\cdot\nabla u-h(a)[\mu\Delta u+(\mu+\lambda)\nabla\dive u]+h(a)(\nabla a+\nabla \theta)-g(a)\nabla(a\theta),\\
 &S_3:=-\dive(\theta u)+g(a)(2\mu|D u|^2+\lambda(\dive u)^2)-h(a)\Delta\theta,
  	\end{aligned}\right.
  \end{equation}
  where
  \begin{equation*}
  h(a):=\frac{a}{1+a}, \quad g(a):=\frac{1}{1+a}.
  \end{equation*}

  \subsection{The proof of Theorem \ref{Thm3}.}
 \quad
  First, we shall establish the estimate for the first order spatial derivative of the solution as follows.
  \begin{lemm}\label{l4}
  Under the assumptions of Theorem \ref{0them}, the global solution $(a, u, \theta)$ of Cauchy problem \eqref{fullcns} has the estimate
  \begin{equation}\label{l4est}
  \begin{aligned}
  \frac12&\frac{d}{dt}\int(|\nabla a|^2+|\nabla u|^2+|\nabla\theta|^2)dx+\mu\int|\nabla^2u|^2dx+(\mu+\lambda)\int|\nabla\dive u|^2dx+\int|\nabla^2\theta|^2dx\\&
  \le C(\|(a,u,\theta)\|_{H^1}+\|\nabla(a, u)\|_{L^2}^{\frac12})(\|\nabla^2u\|_{L^2}^2+\|\nabla^2a\|_{L^2}^2+\|\nabla^2\theta\|_{L^2}^2).
  \end{aligned}
  \end{equation}
  \end{lemm}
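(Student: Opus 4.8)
The plan is to prove \eqref{l4est} by a standard first-order energy estimate. First I would apply $\nabla$ to each of the three equations in \eqref{fullcns}, take the $L^2$ inner product of the results against $\nabla a$, $\nabla u$ and $\nabla\theta$ respectively, and sum. Integrating by parts, the viscous operators $-\mu\Delta u-(\mu+\lambda)\nabla\dive u$ and $-\Delta\theta$ yield exactly the dissipation $\mu\|\nabla^2u\|_{L^2}^2+(\mu+\lambda)\|\nabla\dive u\|_{L^2}^2+\|\nabla^2\theta\|_{L^2}^2$ on the left-hand side. The decisive structural point is that \emph{all} linear coupling terms cancel: the pressure term $\nabla a$ of the momentum equation pairs, after one integration by parts, with the $\nabla\dive u$ produced by differentiating the continuity equation, since $\int\nabla\nabla a:\nabla u\,dx=-\int\nabla a\cdot\nabla\dive u\,dx$; similarly the term $\nabla\theta$ in the momentum equation cancels the $\dive u$ term in the temperature equation. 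Hence the only surviving contributions are the nonlinear integrals $\int\nabla S_1\cdot\nabla a+\int\nabla S_2\cdot\nabla u+\int\nabla S_3\cdot\nabla\theta\,dx$, and the whole lemma reduces to estimating these.

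For each nonlinear integral I would integrate by parts once more where convenient (for instance $\int\nabla S_1\cdot\nabla a=-\int S_1\Delta a\,dx$), then use H\"older's inequality and convert every first-order derivative into the second-order dissipation via the Sobolev embedding $\|\nabla f\|_{L^6}\lesssim\|\nabla^2 f\|_{L^2}$ together with the Gagliardo--Nirenberg interpolations $\|f\|_{L^3}\lesssim\|f\|_{H^1}$ and $\|\nabla f\|_{L^3}^2\lesssim\|\nabla f\|_{L^2}\|\nabla^2 f\|_{L^2}$. The uniform bounds \eqref{uniform} are used throughout: they keep $\rho$ and $T$ bounded above and below (so that $h(a)$, $g(a)$ and their derivatives are bounded), and they make $\|\nabla^2(a,u)\|_{L^2}$ uniformly bounded. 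Crucially, since only $\|\theta\|_{L^\infty_tH^1}$ is given directly, I would invoke Remark \ref{nabla2theta} to secure $\theta\in H^2$, which makes $\|\nabla^2\theta\|_{L^2}$ available. The target in every estimate is to produce a small prefactor, either $\|(a,u,\theta)\|_{H^1}$ or $\|\nabla(a,u)\|_{L^2}^{1/2}$, times the second-order dissipation, closing each term by Young's inequality.

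The two prefactors arise from two different mechanisms. Benign quadratic-type terms such as $\int a\dive u\,\Delta a\,dx$ split as $\|a\|_{L^3}\|\dive u\|_{L^6}\|\Delta a\|_{L^2}\lesssim\|a\|_{H^1}\|\nabla^2u\|_{L^2}\|\nabla^2a\|_{L^2}$ and carry the $H^1$ prefactor directly. The terms in which the coefficient $h(a)$ multiplies a second-order derivative, namely $h(a)[\mu\Delta u+(\mu+\lambda)\nabla\dive u]$ in $S_2$ and $h(a)\Delta\theta$ in $S_3$, are the ones responsible for the $\|\nabla(a,u)\|_{L^2}^{1/2}$ prefactor: after differentiation a typical worst piece reads $h'(a)\nabla a\,\nabla^2(\cdot)\,\nabla(\cdot)$, bounded by $\|\nabla a\|_{L^3}\|\nabla^2(\cdot)\|_{L^2}\|\nabla(\cdot)\|_{L^6}\lesssim\|\nabla a\|_{L^2}^{1/2}\|\nabla^2a\|_{L^2}^{1/2}\|\nabla^2(\cdot)\|_{L^2}^2$, where the uniformly bounded factor $\|\nabla^2a\|_{L^2}^{1/2}$ is absorbed into the constant, leaving precisely $\|\nabla a\|_{L^2}^{1/2}$.

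I expect the main obstacle to be two families of terms. The first is the remaining piece $h(a)\nabla\Delta\theta$ (and its $u$-analogue): one integration by parts generates $\int h(a)|\nabla^2\theta|^2\,dx$, whose natural prefactor $\|a\|_{L^\infty}$ is \emph{not} controlled by $H^1$ in three dimensions, so I would interpolate $\|a\|_{L^\infty}\lesssim\|a\|_{H^1}^{1/2}\|a\|_{H^2}^{1/2}$ and lean on the uniform $H^2$ bound to recover a small factor. The second is the genuinely cubic first-order terms coming from $u\cdot\nabla u$, $u\cdot\nabla a$ and $\dive(\theta u)$, for example $\int\dive u\,|\nabla a|^2\,dx$, which contain no second derivative at the outset and must be converted entirely through interpolation; arranging the bookkeeping so that exactly the advertised prefactor appears in front of $\|\nabla^2u\|_{L^2}^2+\|\nabla^2a\|_{L^2}^2+\|\nabla^2\theta\|_{L^2}^2$, rather than an uncontrolled sub-quadratic power of the second derivatives, is the delicate point, and it is here that the uniform-in-time bounds of Theorem \ref{0them} are indispensable.
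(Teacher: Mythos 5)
Your plan coincides with the paper's proof: the same first-order energy identity with cancellation of the linear coupling terms $\int\nabla^2a\cdot\nabla u\,dx$ and $\int\nabla^2\theta\cdot\nabla u\,dx$ against their counterparts from the continuity and temperature equations, one integration by parts to reduce each nonlinear integral to $\|S_i\|_{L^2}$ times a second derivative, and Gagliardo--Nirenberg interpolation combined with the uniform bound \eqref{uniform} to extract the small prefactors. One small correction: to obtain exactly the prefactor $\|\nabla(a,u)\|_{L^2}^{1/2}$ claimed in \eqref{l4est} you should use the homogeneous interpolation $\|a\|_{L^\infty}\lesssim\|\nabla a\|_{L^2}^{1/2}\|\nabla^2 a\|_{L^2}^{1/2}$ (a case of \eqref{a.1}), as the paper does, rather than $\|a\|_{L^\infty}\lesssim\|a\|_{H^1}^{1/2}\|a\|_{H^2}^{1/2}$, which leaves a residual $\|a\|_{L^2}^{1/2}$ not present in the stated right-hand side. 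Also, the cubic first-order terms and the $h'(a)\nabla a\,\nabla^2(\cdot)$ terms you flag as the main obstacles never actually arise at this order: the integration by parts you already propose places both derivatives on $a$, $u$, $\theta$, so only $\|S_1\|_{L^2}$, $\|S_2\|_{L^2}$, $\|S_3\|_{L^2}$ are ever needed (differentiating the $S_i$ is required only in the second-order estimate of the next lemma).
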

  \begin{proof}
  Applying $\nabla$ operator to $\eqref{fullcns}_1$, multiplying by $\nabla a$ and integrating over $\mathbb R^3$, we have
  \begin{equation*}
  \frac12\frac{d}{dt}\int |\nabla a|^2dx+\int\nabla\dive u\cdot\nabla adx=\int\nabla S_1\cdot\nabla adx.
  \end{equation*}
  Integrating by part and applying H\"{o}lder inequality, we obtain
    \begin{equation}\label{m1}
  \frac12\frac{d}{dt}\int |\nabla a|^2dx-\int\nabla u\cdot\nabla ^2adx\le \|S_1\|_{L^2}\|\nabla^2a\|_{L^2}.
  \end{equation}
   Direct calculation gives
  \begin{equation}\label{n8}
  \|S_1\|_{L^2}\le \|a\|_{L^3}\|\dive u\|_{L^6}+\|u\|_{L^3}\|\nabla a\|_{L^6}\le C(\|a\|_{H^1}+\|u\|_{H^1})(\|\nabla^2u\|_{L^2}+\|\nabla^2a\|_{L^2}),
  \end{equation}
this together with \eqref{m1} yields
\begin{equation}\label{m7}
\frac12\frac{d}{dt}\int |\nabla a|^2dx-\int\nabla u\cdot\nabla ^2adx\le C(\|a\|_{H^1}+\|u\|_{H^1})(\|\nabla^2u\|_{L^2}^2+\|\nabla^2a\|_{L^2}^2).
\end{equation}

Second, applying $\nabla$ operator to $\eqref{fullcns}_2$, and then multiplying by $\nabla u$ and integrating over $\mathbb R^3$, we have
\begin{equation*}
\frac12
\frac{d}{dt}\int|\nabla u|^2dx+\mu\int|\nabla^2u|^2dx
+\!(\mu\!+\lambda)\!\!\int|\nabla\dive u|^2dx
\!+\!\!\int\nabla^2a\cdot\nabla udx\!+\!\!\int\nabla^2\theta\cdot\nabla udx
=\!\int\nabla S_2\cdot\nabla udx,
\end{equation*}
which yields directly
\begin{equation*}
\begin{aligned}
\frac12&\frac{d}{dt}\!\!\int|\nabla u|^2dx+\mu\!\!\int|\nabla^2u|^2dx
\!+(\mu+\lambda)\!\!\int|\nabla\dive u|^2dx+\!\!\int\nabla^2a\cdot\nabla udx
\!+\!\!\int\nabla^2\theta\cdot\nabla udx
\le \|S_2\|_{L^2}\|\nabla^2u\|_{L^2}.
\end{aligned}
\end{equation*}
Using Sobolev inequality, it is easy to check that
\begin{equation}\label{m2}
\|u\cdot\nabla u\|_{L^2}\le\|u\|_{L^3}\|\nabla u\|_{L^6}\le C\|u\|_{H^1}\|\nabla^2u\|_{L^2}.
\end{equation}
Recalling the lower bound of density, and using Sobolev inequality and \eqref{uniform}, we obtain
\begin{equation}
\|\frac{a}{1+a}[\mu\Delta u+(\mu+\lambda)\dive u]\|_{L^2}
\le C\|a\|_{L^\infty}\|\nabla^2u\|_{L^2}
\le C\|\nabla a\|_{L^2}^{\frac12}\|\nabla^2a\|_{L^2}^{\frac12}\|\nabla^2u\|_{L^2}
\le C\|\nabla a\|_{L^2}^{\frac12}\|\nabla^2u\|_{L^2}.
\end{equation}
Similarly, it holds on
\begin{equation}\label{m3}
\|\frac{a}{1+a}(\nabla a+\nabla\theta)\|_{L^2}\le C\|a\|_{L^3}(\|\nabla a\|_{L^6}+\|\nabla\theta\|_{L^6})\le C
\|a\|_{H^1}(\|\nabla^2a\|_{L^2}+\|\nabla^2\theta\|_{L^2}),
\end{equation}
and
\begin{equation}\label{m4}
\|\frac{1}{1+a}\nabla(a\theta)\|_{L^2}
\le C(\|a\|_{L^3}\|\nabla\theta\|_{L^6}+\|\theta\|_{L^3}\|\nabla a\|_{L^6})
\le C(\|a\|_{H^1}+\|\theta\|_{H^1})(\|\nabla^2\theta\|_{L^2}+\|\nabla^2a\|_{L^2}).
\end{equation}
The combination of \eqref{m2}-\eqref{m4} gives
\begin{equation*}
\|S_2\|_{L^2}\|\nabla^2u\|_{L^2}
\le C(\|u\|_{H^1}+\|a\|_{H^1}+\|\theta\|_{H^1}
+\|\nabla a\|_{L^2}^{\frac12})(\|\nabla^2\theta\|_{L^2}^2+\|\nabla^2a\|_{L^2}^2+\|\nabla^2u\|_{L^2}^2),
\end{equation*}
which implies
\begin{equation}\label{m8}
\begin{aligned}
\frac12&\frac{d}{dt}\int|\nabla u|^2dx+\mu\int|\nabla^2u|^2dx+(\mu+\lambda)\int|\nabla\dive u|^2dx+\int\nabla^2a\cdot\nabla udx+\int\nabla^2\theta\cdot\nabla udx\\&
\le C(\|u\|_{H^1}+\|a\|_{H^1}+\|\theta\|_{H^1}+\|\nabla a\|_{L^2}^{\frac12})(\|\nabla^2\theta\|_{L^2}^2+\|\nabla^2a\|_{L^2}^2+\|\nabla^2u\|_{L^2}^2).
\end{aligned}
\end{equation}

Last, applying $\nabla$ operator to $\eqref{fullcns}_3$,
multiplying by $\nabla \theta$ and integrating over $\mathbb R^3$, we have
\begin{equation*}
\frac12\frac{d}{dt}\int|\nabla\theta|^2dx+\int|\nabla^2\theta|^2dx+\int\nabla\dive u\cdot\nabla\theta dx=\int\nabla S_3\cdot\nabla\theta dx.
\end{equation*}
which integrating by part yields directly
\begin{equation}\label{m5}
\frac12\frac{d}{dt}\int|\nabla\theta|^2dx+\int|\nabla^2\theta|^2dx-\int\nabla u\cdot\nabla^2\theta dx\le \|S_3\|_{L^2}\|\nabla^2\theta\|_{L^2}.
\end{equation}
According to \eqref{uniform} and routine calculation, it is easy to check that
\begin{equation}\label{p2}
\begin{aligned}
\|S_3\|_{L^2}&\le\|\theta\|_{L^3}\|\dive u\|_{L^6}+\|u\|_{L^3}\|\nabla\theta\|_{L^6}+\|\nabla u\|_{L^3}\|\nabla u\|_{L^6}+\|a\|_{L^\infty}\|\nabla^2\theta\|_{L^2}\\&
\le C(\|\theta\|_{H^1}+\|u\|_{H^1})(\|\nabla^2u\|_{L^2}+\|\nabla^2\theta\|_{L^2})+C\|\nabla u\|_{L^2}^{\frac12}\|\nabla^2u\|_{L^2}^{\frac12}\|\nabla^2u\|_{L^2}\\&
\quad +C\|\nabla a\|_{L^2}^{\frac12}\|\nabla^2a\|_{L^2}^{\frac12}\|\nabla^2\theta\|_{L^2}\\&
\le C(\|\theta\|_{H^1}+\|u\|_{H^1}+\|\nabla u\|_{L^2}^{\frac12}+\|\nabla a\|_{L^2}^{\frac12})(\|\nabla^2u\|_{L^2}+\|\nabla^2\theta\|_{L^2}),
\end{aligned}
\end{equation}
which implies that
\begin{equation}\label{m6}
\|S_3\|_{L^2}\|\nabla^2\theta\|_{L^2}\le C(\|\theta\|_{H^1}+\|u\|_{H^1}+\|\nabla u\|_{L^2}^{\frac12}+\|\nabla a\|_{L^2}^{\frac12})(\|\nabla^2u\|_{L^2}^2+\|\nabla^2\theta\|^2_{L^2}).
\end{equation}
The combination of \eqref{m5} and \eqref{m6} gives immediately
\begin{equation}\label{m9}
\begin{aligned}
\frac12&\frac{d}{dt}\int|\nabla\theta|^2dx+\int|\nabla^2\theta|^2dx-\int\nabla u\cdot\nabla^2\theta dx\\&
\le C(\|\theta\|_{H^1}+\|u\|_{H^1}+\|\nabla u\|_{L^2}^{\frac12}+\|\nabla a\|_{L^2}^{\frac12})(\|\nabla^2u\|^2_{L^2}+\|\nabla^2\theta\|^2_{L^2}).
\end{aligned}
\end{equation}
Summing \eqref{m7}, \eqref{m8} and \eqref{m9}, we obtain
\begin{equation*}
  \begin{aligned}
  \frac12&\frac{d}{dt}\int(|\nabla a|^2+|\nabla u|^2+|\nabla\theta|^2)dx+\mu\int|\nabla^2u|^2dx+(\mu+\lambda)\int|\nabla\dive u|^2dx+\int|\nabla^2\theta|^2dx\\&
  \le C(\|(a,u,\theta)\|_{H^1}
  +\|\nabla(a, u)\|_{L^2}^{\frac12})(\|\nabla^2u\|_{L^2}^2+\|\nabla^2a\|_{L^2}^2+\|\nabla^2\theta\|_{L^2}^2).
  \end{aligned}
  \end{equation*}
Therefore, we completes the proof.
\end{proof}

Next, we shall establish the energy estimate for the second order spatial derivative of the solution.

\begin{lemm}\label{l5}
  Under the assumptions of Theorem \ref{0them}, the global solution $(a, u, \theta)$ of Cauchy problem  \eqref{fullcns} has the estimate
 \begin{equation}\label{l5est}
\begin{aligned}
\frac12&\frac{d}{dt}\int(|\nabla^2a|^2+|\nabla^2u|^2+|\nabla^2\theta|^2)dx+\mu\int|\nabla^3u|^2dx+(\mu+\lambda)\int|\nabla^2\dive u|^2dx+\int|\nabla^3\theta|^2dx\\&
\le C(\|\nabla u\|_{L^2}^{\frac14}+\|\nabla (u,a)\|_{L^2}^{\frac12}+\|(u,\theta)\|_{H^1}+\|\theta\|_{L^2}^{\frac14})(\|\nabla^2u\|_{H^1}^2+\|\nabla^2\theta\|_{H^1}^2+\|\nabla^2a\|_{L^2}^2).
\end{aligned}
\end{equation}
  \end{lemm}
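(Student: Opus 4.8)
The plan is to differentiate the perturbation system \eqref{fullcns} twice in space and run a second-order energy estimate that mirrors the first-order computation in Lemma \ref{l4}. Concretely, I would apply $\nabla^2$ to each of the three equations of \eqref{fullcns}, take the $L^2$ inner product of the resulting identities with $\nabla^2 a$, $\nabla^2 u$ and $\nabla^2\theta$ respectively, and add them. The viscous and heat-conduction terms produce, after one integration by parts, exactly the dissipation $\mu\|\nabla^3 u\|_{L^2}^2+(\mu+\lambda)\|\nabla^2\dive u\|_{L^2}^2+\|\nabla^3\theta\|_{L^2}^2$ on the left. The linear coupling terms $\int\nabla^2\dive u\cdot\nabla^2 a\,dx$, $\int\nabla^2\nabla a\cdot\nabla^2 u\,dx$, $\int\nabla^2\dive u\cdot\nabla^2\theta\,dx$ and $\int\nabla^2\nabla\theta\cdot\nabla^2 u\,dx$ coming from the pressure gradient and from $\dive u$ cancel pairwise after moving one derivative by integration by parts, just as in Lemma \ref{l4}. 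What remains on the right is the sum of three nonlinear integrals $\int\nabla^2 S_1\cdot\nabla^2 a\,dx+\int\nabla^2 S_2\cdot\nabla^2 u\,dx+\int\nabla^2 S_3\cdot\nabla^2\theta\,dx$, with $S_1,S_2,S_3$ given in \eqref{S}.

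The heart of the proof is estimating these three integrals so that every contribution factors as a low-order norm (the prefactor in \eqref{l5est}) times a product of two dissipation-order norms drawn from $\{\|\nabla^3 u\|_{L^2},\|\nabla^2\dive u\|_{L^2},\|\nabla^3\theta\|_{L^2},\|\nabla^2 a\|_{L^2},\|\nabla^2 u\|_{L^2},\|\nabla^2\theta\|_{L^2}\}$; Young's inequality then converts each such product into the squared quantities appearing in $\|\nabla^2 u\|_{H^1}^2+\|\nabla^2\theta\|_{H^1}^2+\|\nabla^2 a\|_{L^2}^2$. For each $S_i$ I would expand $\nabla^2 S_i$ by the Leibniz rule and integrate by parts whenever a third derivative of $a$ appears, since the density equation supplies no $\|\nabla^3 a\|_{L^2}$ dissipation. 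The prefactors then arise from Gagliardo--Nirenberg interpolation together with the uniform bounds \eqref{uniform}: for instance $\|u\|_{L^\infty}\lesssim\|\nabla u\|_{L^2}^{1/2}\|\nabla^2 u\|_{L^2}^{1/2}$ and $\|a\|_{L^\infty}\lesssim\|\nabla a\|_{L^2}^{1/2}\|\nabla^2 a\|_{L^2}^{1/2}$ yield the factor $\|\nabla(u,a)\|_{L^2}^{1/2}$, while $\|\nabla u\|_{L^\infty}\lesssim\|\nabla u\|_{L^2}^{1/4}\|\nabla^3 u\|_{L^2}^{3/4}$ applied to the transport terms $u\cdot\nabla a$ and $u\cdot\nabla u$ produces the factor $\|\nabla u\|_{L^2}^{1/4}$, and the purely quadratic lower-order products give the $\|(u,\theta)\|_{H^1}$ factor.

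The main obstacle is the coupling created by the pressure $P=\rho\theta$, which distinguishes this computation from the isentropic case and from the first-order estimate. Differentiating the terms $h(a)(\nabla a+\nabla\theta)$ and $g(a)\nabla(a\theta)$ in $S_2$ twice generates contributions containing $\nabla^3 a$; since only $\|\nabla^2 a\|_{L^2}$ (not $\|\nabla^3 a\|_{L^2}$) is available as a dissipation, I must integrate by parts to transfer the extra derivative onto the velocity, which produces the genuinely delicate cross term of the type $\|\theta\|_{L^\infty}\|\nabla^2 a\|_{L^2}\|\nabla^3 u\|_{L^2}$. The trouble is that \eqref{uniform} only controls $\theta$ in $L^\infty_t H^1$, so $\|\theta\|_{L^\infty}$ cannot be bounded directly by an $H^2$ norm of $\theta$; the key is to invoke Remark \ref{nabla2theta}, i.e. the a priori bound \eqref{thetabound} $\|\nabla^2\theta\|_{L^\infty(0,\infty;L^2)}<\infty$ obtained from the compatibility conditions, so that $\|\theta\|_{L^\infty}\lesssim\|\theta\|_{L^2}^{1/4}\|\nabla^2\theta\|_{L^2}^{3/4}\lesssim\|\theta\|_{L^2}^{1/4}$. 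This is precisely what yields the prefactor $\|\theta\|_{L^2}^{1/4}$ and lets the cross term be absorbed, via Young's inequality, into $\|\theta\|_{L^2}^{1/4}(\|\nabla^2 a\|_{L^2}^2+\|\nabla^3 u\|_{L^2}^2)$. The same device (using \eqref{thetabound}) is needed to control the quadratic dissipation term $g(a)(2\mu|Du|^2+\lambda(\dive u)^2)$ and the transport term $\dive(\theta u)$ in $S_3$; once all nonlinear contributions are bounded in this prefactor-times-dissipation form, summing the three estimates gives \eqref{l5est}.
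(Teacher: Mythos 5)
Your proposal matches the paper's proof in all essentials: apply $\nabla^2$, test with $\nabla^2(a,u,\theta)$, cancel the linear cross terms, integrate by parts to avoid $\nabla^3 a$, extract the prefactors $\|\nabla u\|_{L^2}^{1/4}$, $\|\nabla(u,a)\|_{L^2}^{1/2}$, $\|(u,\theta)\|_{H^1}$ via Gagliardo--Nirenberg and \eqref{uniform}, and handle the pressure cross term $\|\theta\|_{L^\infty}\|\nabla^2a\|_{L^2}\|\nabla^3u\|_{L^2}$ through $\|\theta\|_{L^\infty}\lesssim\|\theta\|_{L^2}^{1/4}\|\nabla^2\theta\|_{L^2}^{3/4}$ exactly as in \eqref{df2}. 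The only cosmetic difference is that the paper absorbs the leftover $\|\nabla^2\theta\|_{L^2}^{3/4}$ by Young's inequality into the dissipation (using only \eqref{uniform}) rather than by invoking \eqref{thetabound}, and the $S_3$ terms are handled with $L^3$--$L^6$ splittings that do not actually require \eqref{thetabound}; both variants yield the stated bound.
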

  \begin{proof}
  First, applying $\nabla^2$ operator to $\eqref{fullcns}_1$, and then multiplying by $\nabla^2a$ and integrating over $\mathbb R^3$, we get
\begin{equation}\label{s1}
\frac12\frac{d}{dt}\int|\nabla^2a|^2dx+\int\nabla^2\dive u\cdot\nabla^2adx\le \|\nabla^2S_1\|_{L^2}\|\nabla^2a\|_{L^2}.
\end{equation}
Recall that $S_1=-a\dive u-u\cdot\nabla a$, direct calculation shows
$$\nabla^2(a\dive u)=a\nabla^2\dive u+2\nabla a\nabla\dive u+\nabla^2a\dive u,$$
it follows that
\begin{equation}\label{s2}
\begin{aligned}
\|&\nabla^2(a\dive u)\|_{L^2}\|\nabla^2a\|_{L^2}\\&
\le (\|a\|_{L^\infty}\|\nabla^2\dive u\|_{L^2}+\|\nabla a\|_{L^3}\|\nabla\dive u\|_{L^6})\|\nabla^2a\|_{L^2}+\|\dive u\|_{L^\infty}\|\nabla^2a\|_{L^2}^2\\&
\le C(\|a\|_{L^\infty}+\|\nabla a\|_{L^3})(\|\nabla^3u\|_{L^2}^2+\|\nabla^2a\|_{L^2}^2)+\|\dive u\|_{L^\infty}\|\nabla^2a\|_{L^2}^2.
\end{aligned}
\end{equation}
By routine checking, it holds on
$$\nabla^2(u\cdot\nabla a)=u\cdot\nabla(\nabla^2a)+2\nabla u\cdot\nabla^2a+\nabla^2u\cdot\nabla a.$$
Integrating by part, we get
$$\int u\cdot\nabla(\nabla^2a)\cdot\nabla^2adx=\int u\cdot\nabla(\frac12|\nabla^2a|^2)dx=-\frac12\int(\dive u)|\nabla^2a|^2dx,$$
and hence, we obtain
\begin{equation}\label{s3}
\begin{aligned}
|\int\nabla^2(u\cdot\nabla a)\cdot\nabla^2adx|&\le C(\|\nabla u\|_{L^\infty}\|\nabla^2a\|_{L^2}+\|\nabla^2u\|_{L^6}\|\nabla a\|_{L^3})\|\nabla^2a\|_{L^2}\\&\
\quad +C\|\dive u\|_{L^\infty}\|\nabla^2a\|_{L^2}^2\\&
\le C\|\nabla u\|_{L^\infty}\|\nabla^2a\|_{L^2}^2+C\|\nabla a\|_{L^3}\|\nabla^3u\|_{L^2}\|\nabla^2a\|_{L^2}.
\end{aligned}
\end{equation}
The combination of \eqref{s1}, \eqref{s2} and \eqref{s3} yields
\begin{equation*}
|\int\nabla^2S_1\cdot\nabla^2adx|\le C(\|\nabla a\|_{L^3}+\|a\|_{L^\infty}+\|\nabla u\|_{L^\infty})\|\nabla^2a\|_{L^2}^2+C(\|\nabla a\|_{L^3}+\|a\|_{L^\infty})\|\nabla^3u\|_{L^2}^2.
\end{equation*}
Thanks to the Sobolev inequality and the uniform estimate \eqref{uniform}, it follows that
$$\|a\|_{L^\infty}+\|\nabla a\|_{L^3}\le \|\nabla a\|_{L^2}^{\frac12}\|\nabla^2a\|_{L^2}^{\frac12}\le C\|\nabla a\|_{L^2}^{\frac12},$$
and
\begin{equation}\label{df}
\|\nabla u\|_{L^\infty}\|\nabla^2a\|_{L^2}^2\le C\|\nabla u\|_{L^2}^{\frac14}\|\nabla^3u\|_{L^2}^{\frac34}\|\nabla^2a\|_{L^2}^{\frac54}\|\nabla^2a\|_{L^2}^{\frac34}\le C\|\nabla u\|_{L^2}^{\frac14}(\|\nabla^3u\|_{L^2}^2+\|\nabla^2a\|_{L^2}^2).
\end{equation}
The combination of above estimates yields
\begin{equation}\label{n5}
\frac12\frac{d}{dt}\int |\nabla^2a|^2dx-\int\nabla^2u\cdot\nabla^3adx\le (\|\nabla a\|_{L^2}^{\frac12}+\|\nabla u\|_{L^2}^{\frac14})(\|\nabla^2a\|_{L^2}^2+\|\nabla^3u\|_{L^2}^2).
\end{equation}

Second, applying $\nabla^2$ operator to $\eqref{fullcns}_2$, multiplying by $\nabla^2u$
and integrating over $\mathbb R^3$, we get
\begin{equation}\label{n1}
\begin{aligned}
\frac12&\frac{d}{dt}\int|\nabla^2u|^2dx+\mu\int|\nabla^3u|^2dx+(\mu+\lambda)\int|\nabla^2\dive u|^2dx+\int\nabla^3a\cdot\nabla^2udx+\int\nabla^3\theta\cdot\nabla^2udx\\&
=\int\nabla^2S_2\cdot\nabla^2udx=-\int\nabla S_2\cdot\nabla^3udx.
\end{aligned}
\end{equation}
By routine checking, it is easy to see
\begin{equation*}
\begin{aligned}
\nabla S_2&=-\nabla u\cdot\nabla u-u\cdot\nabla^2 u-\frac{a}{1+a}[\mu\nabla\Delta u+(\mu+\lambda)\nabla^2\dive u]+\frac{\nabla a}{(1+a)^2}[\mu\Delta u+(\mu+\lambda)\nabla\dive u]\\&
\quad -\frac{a}{1+a}(\nabla^2a+\nabla^2\theta)+\frac{\nabla a}{(1+a)^2}(\nabla a+\nabla\theta)-\frac{1}{1+a}\nabla^2(a\theta)+\frac{\nabla a}{(1+a)^2}\nabla(a\theta).
\end{aligned}
\end{equation*}
By using Sobolev inequality, it holds on
\begin{equation*}
\begin{aligned}
\|\nabla S_2\|_{L^2}&\le\|\nabla u\|_{L^3}\|\nabla u\|_{L^6}+\|u\|_{L^3}\|\nabla^2u\|_{L^6}+\|a\|_{L^\infty}\|\nabla^3u\|_{L^2}+\|\nabla a\|_{L^3}\|\nabla^2u\|_{L^6}\\&
\quad +\|a\|_{L^\infty}(\|\nabla^2a\|_{L^2}+\|\nabla^2\theta\|_{L^2})+\|\nabla a\|_{L^3}(\|\nabla a\|_{L^6}+\|\nabla\theta\|_{L^6})+\|\theta\|_{L^\infty}\|\nabla^2a\|_{L^2}\\&
\quad +\|a\|_{L^\infty}\|\nabla^2\theta\|_{L^2}+\|\theta\|_{L^6}\|(\nabla a)^2\|_{L^3}+\|a\|_{L^\infty}\|\nabla a\|_{L^3}\|\nabla\theta\|_{L^6}\\&
\le C(\|\nabla (u,a)\|_{L^3}+\|u\|_{L^3}+\|a\|_{L^\infty}+\|a\|_{L^\infty}\|\nabla a\|_{L^3})(\|\nabla ^2u\|_{H^1}+\|\nabla^2a\|_{L^2}+\|\nabla^2\theta\|_{L^2})\\&
\quad +\|\theta\|_{L^\infty}\|\nabla^2a\|_{L^2}+C\|\nabla\theta\|_{L^2}\|\nabla^2a\|_{L^2}^2.
\end{aligned}
\end{equation*}
By virtue of Sobolev inequality and the uniform bound \eqref{uniform}, we see
\begin{equation*}
\begin{aligned}
\|&\nabla (u,a)\|_{L^3}+\|u\|_{L^3}+\|a\|_{L^\infty}+\|a\|_{L^\infty}\|\nabla a\|_{L^3}\\&
\le C(\|\nabla u\|_{L^2}^{\frac12}\|\nabla^2u\|_{L^2}^{\frac12}+\|u\|_{H^1}+\|\nabla a\|_{L^2}^{\frac12}\|\nabla^2a\|_{L^2}^{\frac12}+\|\nabla a\|_{L^2}^{\frac12}\|\nabla^2a\|_{L^2}^{\frac12}\|\nabla a\|_{H^1})\\&
\le C(\|\nabla u\|_{L^2}^{\frac12}+\|u\|_{H^1}+\|\nabla a\|_{L^2}^{\frac12}),
\end{aligned}
\end{equation*}
and
$$\|\nabla\theta\|_{L^2}\|\nabla^2a\|_{L^2}^2\le C\|\nabla\theta\|_{L^2}\|\nabla^2a\|_{L^2}.$$
Hence, we have
\begin{equation}\label{n2}
\|\nabla S_2\|_{L^2}
\le C(\|\nabla u\|_{L^2}^{\frac12}\!+\!\|u\|_{H^1}
\!+\!\|\nabla a\|_{L^2}^{\frac12}\!+\!\|\nabla\theta\|_{L^2})
(\|\nabla ^2u\|_{H^1}
\!+\!\|\nabla^2a\|_{L^2}+\|\nabla^2\theta\|_{L^2})
\!+\!\|\theta\|_{L^\infty}\|\nabla^2a\|_{L^2}.
\end{equation}
The combination of \eqref{n1} and \eqref{n2} gives rise to
\begin{equation}\label{n3}
\begin{aligned}
\frac12&\frac{d}{dt}\int|\nabla^2u|^2dx+\mu\int|\nabla^3u|^2dx+(\mu+\lambda)\int|\nabla^2\dive u|^2dx+\int\nabla^3a\cdot\nabla^2udx+\int\nabla^3\theta\cdot\nabla^2udx\\&
\le C(\|\nabla u\|_{L^2}^{\frac12}\!+\!\|u\|_{H^1}+\|\nabla a\|_{L^2}^{\frac12}\!+\!\|\nabla\theta\|_{L^2})
(\|\nabla ^2u\|_{H^1}^2\!+\!\|\nabla^2a\|_{L^2}^2\!+\!\|\nabla^2\theta\|_{L^2}^2)
 \!+\!\|\theta\|_{L^\infty}\|\nabla^2a\|_{L^2}\|\nabla^3u\|_{L^2}.
\end{aligned}
\end{equation}
For the last term on the right-hand side of above inequality, using Cauchy inequality,
Sobolev inequality, and the uniform bound \eqref{uniform}, we have
\begin{equation}\label{df2}
\begin{aligned}
\|\theta\|_{L^\infty}\|\nabla^2a\|_{L^2}\|\nabla^3u\|_{L^2}&\le\|\theta\|_{L^2}^{\frac14}\|\nabla^2\theta\|_{L^2}^{\frac34}\|\nabla^2a\|_{L^2}^{\frac14}\|\nabla^2a\|_{L^2}^{\frac34}\|\nabla^3u\|_{L^2}\\&
\le \|\theta\|_{L^2}^{\frac14}(\|\nabla^2\theta\|_{L^2}+\|\nabla^2a\|_{L^2})\|\nabla^3u\|_{L^2}\\&
\le \|\theta\|_{L^2}^{\frac14}(\|\nabla^2\theta\|_{L^2}^2+\|\nabla^2a\|_{L^2}^2+\|\nabla^3u\|_{L^2}^2),
\end{aligned}
\end{equation}
this together with \eqref{n3} gives
\begin{equation}\label{n6}
\begin{aligned}
\frac12&\frac{d}{dt}\int|\nabla^2u|^2dx+\mu\int|\nabla^3u|^2dx+(\mu+\lambda)\int|\nabla^2\dive u|^2dx+\int\nabla^3a\cdot\nabla^2udx+\int\nabla^3\theta\cdot\nabla^2udx\\&
\le C(\|\nabla(u,a)\|_{L^2}^{\frac12}+\|u\|_{H^1}+\|\nabla\theta\|_{L^2}+\|\theta\|_{L^2}^{\frac14})(\|\nabla ^2u\|_{H^1}^2+\|\nabla^2a\|_{L^2}^2+\|\nabla^2\theta\|_{L^2}^2).
\end{aligned}
\end{equation}

Last, applying $\nabla^2$ operator to $\eqref{fullcns}_3$, and then multiplying by $\nabla^2\theta$ and integrating over $\mathbb R^3$, we get
\begin{equation*}
\frac12\frac{d}{dt}\int|\nabla^2\theta|^2dx+\int|\nabla^3\theta|^2dx+\int\nabla^2\dive u\cdot\nabla^2\theta dx=\int\nabla^2S_3\cdot\nabla^2\theta dx.
\end{equation*}
Integrating by part, we get
\begin{equation}\label{n4}
\frac12\frac{d}{dt}\int|\nabla^2\theta|^2dx+\int|\nabla^3\theta|^2dx-\int\nabla^2 u\cdot\nabla^3\theta dx\le \|\nabla S_3\|_{L^2}\|\nabla^3\theta\|_{L^2}.
\end{equation}
It is easy to compute
\begin{equation*}
\begin{aligned}
\|\nabla S_3\|_{L^2}&\le\|\nabla u\|_{L^3}\|\nabla\theta\|_{L^6}+\|\theta\|_{L^3}\|\nabla^2u\|_{L^6}+\|u\|_{L^3}\|\nabla^2\theta\|_{L^6}+\|\nabla u\|_{L^3}\|\nabla^2u\|_{L^6}\\&
\quad +\|\nabla a\|_{L^6}\|(\nabla u)^2\|_{L^3}+\|a\|_{L^\infty}\|\nabla^3\theta\|_{L^2}+\|\nabla a\|_{L^3}\|\nabla^2\theta\|_{L^6}\\&
\le C(\|\nabla u\|_{L^3}+\|\theta\|_{L^3}+\|u\|_{L^3}+\|a\|_{L^\infty}+\|\nabla a\|_{L^3})(\|\nabla^2\theta\|_{H^1}+\|\nabla^3 u\|_{L^2})\\&
\quad +\|\nabla u\|_{L^\infty}\|\nabla u\|_{L^3}\|\nabla a\|_{L^6}.
\end{aligned}
\end{equation*}
Using Sobolev inequality and the uniform bound \eqref{uniform}, we have
\begin{equation*}
\begin{aligned}
\|\nabla& u\|_{L^3}+\|\theta\|_{L^3}+\|u\|_{L^3}+\|a\|_{L^\infty}+\|\nabla a\|_{L^3}\\&
\le C(\|\nabla u\|_{L^2}^{\frac12}\|\nabla^2u\|_{L^2}^{\frac12}+\|\theta\|_{H^1}+\|u\|_{H^1}+\|\nabla a\|_{L^2}^{\frac12}\|\nabla^2a\|_{L^2}^{\frac12})\\&
\le C(\|\nabla u\|_{L^2}^{\frac12}+\|\nabla a\|_{L^2}^{\frac12}+\|\theta\|_{H^1}+\|u\|_{H^1}),
\end{aligned}
\end{equation*}
and
\begin{equation*}
\begin{aligned}
\|\nabla u\|_{L^\infty}\|\nabla u\|_{L^3}\|\nabla a\|_{L^6}&\le \|\nabla^2u\|_{L^2}^{\frac12}\|\nabla^3u\|_{L^2}^{\frac12}\|\nabla u\|_{L^2}^{\frac12}|\nabla^2 u\|_{L^2}^{\frac12}\|\nabla^2 a\|_{L^2}\\&
\le C\|\nabla u\|_{L^2}^{\frac12}(\|\nabla^2u\|_{L^2}+\|\nabla^3u\|_{L^2}).
\end{aligned}
\end{equation*}
Thus, it follows that
\begin{equation*}
\begin{aligned}
\|\nabla S_3\|_{L^2}\|\nabla^3\theta\|_{L^2}&\le C(\|\nabla u\|_{L^2}^{\frac12}+\|\nabla a\|_{L^2}^{\frac12}+\|\theta\|_{H^1}+\|u\|_{H^1})(\|\nabla^2\theta\|^2_{H^1}+\|\nabla^2u\|_{H^1}^2).
\end{aligned}
\end{equation*}
Substituting above two estimates into \eqref{n4}, we obtain
\begin{equation}\label{n7}
\begin{aligned}
\frac12&\frac{d}{dt}\int|\nabla^2\theta|^2dx+\int|\nabla^3\theta|^2dx-\int\nabla^2 u\cdot\nabla^3\theta dx\\&
\le C(\|\nabla u\|_{L^2}^{\frac12}+\|\nabla a\|_{L^2}^{\frac12}+\|\theta\|_{H^1}+\|u\|_{H^1})(\|\nabla^2\theta\|^2_{H^1}+\|\nabla^2u\|_{H^1}^2).
\end{aligned}
\end{equation}
Finally, the combination of \eqref{n5}, \eqref{n6} and \eqref{n7} gives rise to
\begin{equation*}
\begin{aligned}
\frac12&\frac{d}{dt}\int(|\nabla^2a|^2+|\nabla^2u|^2+|\nabla^2\theta|^2)dx+\mu\int|\nabla^3u|^2dx+(\mu+\lambda)\int|\nabla^2\dive u|^2dx+\int|\nabla^3\theta|^2dx\\&
\le C(\|\nabla u\|_{L^2}^{\frac14}+\|\nabla (u,a)\|_{L^2}^{\frac12}+\|(u,\theta)\|_{H^1}+\|\theta\|_{L^2}^{\frac14})(\|\nabla^2u\|_{H^1}^2+\|\nabla^2\theta\|_{H^1}^2+\|\nabla^2a\|_{L^2}^2),
\end{aligned}
\end{equation*}
which completes the proof of this lemma.
  \end{proof}

  In order to close the estimate, we need to establish the dissipation estimate for $\nabla^2a$.

  \begin{lemm}\label{l6}
  Under the assumptions of Theorem \ref{0them}, the global solution $(a, u, \theta)$ of Cauchy problem \eqref{fullcns} has the estimate
  \begin{equation}\label{l6est}
\begin{aligned}
\frac{d}{dt}&\int\nabla u\cdot\nabla^2adx+\frac34\int|\nabla^2a|^2dx\\&
\le C(\|\nabla^2u\|_{H^1}^2+\|\nabla^2\theta\|_{L^2}^2)+C(\|(a,u,\theta)\|_{H^1}+\|\theta\|_{L^2}^{\frac12})(\|\nabla^2a\|_{L^2}^2+\|\nabla^2\theta\|_{L^2}^2).
\end{aligned}
\end{equation}
\end{lemm}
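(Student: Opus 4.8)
The plan is to exploit the cross term $J(t):=\int \nabla u\cdot\nabla^2 a\,dx$ to recover the (comparatively weak) dissipation of the density, using the standard observation that the pressure gradient $\nabla a$ in the momentum equation, when tested against $\nabla^2 a$, produces exactly $-\|\nabla^2 a\|_{L^2}^2$. First I would differentiate $J$ in time and split by the product rule,
\[
\frac{d}{dt}\int \nabla u\cdot\nabla^2 a\,dx=\int \nabla u_t\cdot\nabla^2 a\,dx+\int \nabla u\cdot\nabla^2 a_t\,dx.
\]
Into the first integral I substitute $u_t$ from $\eqref{fullcns}_2$, i.e. $u_t=\mu\Delta u+(\mu+\lambda)\nabla\dive u-\nabla a-\nabla\theta+S_2$. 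The term $-\nabla a$ yields the gain $-\int|\nabla^2 a|^2\,dx$; the two viscous terms pair third-order derivatives of $u$ with $\nabla^2 a$ and, after Cauchy's inequality with a small weight $\varepsilon$, contribute $\varepsilon\|\nabla^2 a\|_{L^2}^2+C_\varepsilon\|\nabla^3 u\|_{L^2}^2$; and $-\nabla\theta$ gives $-\int\nabla^2\theta\cdot\nabla^2 a\,dx$, bounded by $\varepsilon\|\nabla^2 a\|_{L^2}^2+C_\varepsilon\|\nabla^2\theta\|_{L^2}^2$. Into the second integral I substitute $a_t=-\dive u+S_1$ from $\eqref{fullcns}_1$; its linear part $-\int\nabla u\cdot\nabla^2\dive u\,dx$ is, after one integration by parts, controlled purely by $\|\nabla^2 u\|_{H^1}^2$.

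It then remains to estimate the two nonlinear contributions $\int\nabla S_2\cdot\nabla^2 a\,dx$ and $\int\nabla u\cdot\nabla^2 S_1\,dx$ using the explicit form of $S_1,S_2$ in $\eqref{S}$, the lower bound of the density, the Sobolev embeddings into $L^3,L^6,L^\infty$, and the uniform bound \eqref{uniform}. The guiding principle is to distribute derivatives by integration by parts so that $a$ never carries more than two derivatives (we have no control of $\|\nabla^3 a\|_{L^2}$) and so that every product is estimated with a low-order factor measured in $H^1$ or in $\|\theta\|_{L^2}^{1/2}$ acting as the small prefactor, while the highest-order factors are absorbed into $\|\nabla^2 u\|_{H^1}^2$, $\|\nabla^2\theta\|_{L^2}^2$ or into the small multiples of $\|\nabla^2 a\|_{L^2}^2$. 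For instance the terms of $S_2$ of the form $h(a)[\mu\Delta u+(\mu+\lambda)\nabla\dive u]$ produce, after applying $\nabla$, third-order velocity derivatives times $\nabla^2 a$ with an $\|a\|_{L^\infty}$ coefficient, handled as above; the pressure-type terms $h(a)\nabla\theta$ and $g(a)\nabla(a\theta)$ generate $\nabla^2\theta\cdot\nabla^2 a$ and $\nabla^2 a\cdot\nabla^2 a$ products whose coefficients are estimated by $\|a\|_{L^\infty}$ and $\|\theta\|_{L^\infty}$ through Gagliardo--Nirenberg interpolation (using \eqref{uniform} and Remark \ref{nabla2theta}), yielding precisely the factors $\|(a,u,\theta)\|_{H^1}+\|\theta\|_{L^2}^{1/2}$ in front of $\|\nabla^2 a\|_{L^2}^2+\|\nabla^2\theta\|_{L^2}^2$.

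Finally I would collect the estimates: the gain $-\int|\nabla^2 a|^2\,dx$ dominates, and after choosing the small parameters $\varepsilon$ so that the accumulated $\varepsilon$-multiples of $\|\nabla^2 a\|_{L^2}^2$ absorb at most one quarter of it, the leading coefficient drops to $\tfrac34$, which is transferred to the left-hand side to produce $\tfrac34\int|\nabla^2 a|^2\,dx$. The main obstacle is the temperature: since \eqref{uniform} only provides the uniform bound $\|\theta\|_{L^\infty_t(H^1)}\le C$ and not uniform control of $\|\nabla^2\theta\|_{L^2}$, the terms involving $\theta$ cannot be folded into a small prefactor and must instead be retained explicitly as $C\|\nabla^2\theta\|_{L^2}^2$ on the right-hand side; keeping track of exactly which products can be absorbed into the dissipations versus which must survive as $\|\nabla^2 u\|_{H^1}^2+\|\nabla^2\theta\|_{L^2}^2$, while never exceeding two derivatives on $a$, is the delicate bookkeeping that drives the proof.
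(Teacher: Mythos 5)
Your proposal is correct and takes essentially the same route as the paper: the paper tests $\nabla\eqref{fullcns}_2$ against $\nabla^2 a$ and then extracts $\frac{d}{dt}\int\nabla u\cdot\nabla^2a\,dx$ using the continuity equation, which is algebraically identical to your product-rule decomposition, and it estimates the nonlinear terms through the previously established bounds \eqref{n8} on $\|S_1\|_{L^2}$ and \eqref{n2} on $\|\nabla S_2\|_{L^2}$. In particular, your treatment of the temperature via $\|\theta\|_{L^\infty}^2\lesssim\|\theta\|_{L^2}^{1/2}\|\nabla^2\theta\|_{L^2}^{3/2}$ combined with the uniform bounds \eqref{uniform} and \eqref{thetabound} is exactly the paper's step \eqref{s4}, so there is no gap.
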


\begin{proof}
Applying $\nabla$ operator to $\eqref{fullcns}_2$,
multiplying by $\nabla^2a$ and integrating over $\mathbb R^3$, we get
\begin{equation*}
\int\partial_t\nabla u\cdot\nabla^2adx-\int[\mu\nabla\Delta u+(\mu+\lambda)\nabla^2\dive u]\cdot\nabla^2adx
+\int|\nabla^2a|^2dx+\int\nabla^2\theta\cdot\nabla^2adx=\int \nabla S_2\cdot\nabla^2adx.
\end{equation*}
According to $\eqref{fullcns}_1$, it holds on
\begin{equation*}
\begin{aligned}
\int\partial_t\nabla u\cdot\nabla^2adx&=\int\partial_t(\nabla u\cdot\nabla^2a)dx-\int\nabla u\cdot\nabla^2a_tdx\\&
=\frac{d}{dt}\int\nabla u\cdot\nabla^2adx+\int\nabla\dive u\cdot\partial_t(\nabla a)dx\\&
=\frac{d}{dt}\int\nabla u\cdot\nabla^2adx+\int\nabla\dive u\cdot(\nabla S_1-\nabla\dive u)dx.
\end{aligned}
\end{equation*}
Then using Cauchy and H\"{o}lder inequalities, we obtain
\begin{equation*}
\begin{aligned}
\frac{d}{dt}&\int\nabla u\cdot\nabla^2adx+\int|\nabla^2a|^2dx\\&
=\int[\mu\nabla\Delta u+(\mu+\lambda)\nabla^2\dive u]\cdot\nabla^2adx+\int \nabla S_2\cdot\nabla^2adx\\&
\quad -\int\nabla\dive u\cdot(\nabla S_1-\nabla\dive u)dx-\int\nabla^2\theta\cdot\nabla^2adx\\&
\le \frac14\|\nabla^2a\|_{L^2}^2+C(\|\nabla^2u\|_{H^1}^2+\|\nabla^2\theta\|_{L^2}^2)+C(\|\nabla S_2\|_{L^2}^2+\|S_1\|_{L^2}^2).
\end{aligned}
\end{equation*}
According to \eqref{n8}, \eqref{n2} and the uniform bound \eqref{uniform}, it is easy to get
\begin{equation}\label{s5}
\begin{aligned}
\frac{d}{dt}&\int\nabla u\cdot\nabla^2adx+\frac34\int|\nabla^2a|^2dx\\&
\le C(\|\nabla^2u\|_{H^1}^2+\|\nabla^2\theta\|_{L^2}^2)+C(\|a\|_{H^1}+\|u\|_{H^1}+\|\theta\|_{H^1})\|\nabla^2a\|_{L^2}^2+\|\theta\|_{L^\infty}^2\|\nabla^2a\|_{L^2}^2.
\end{aligned}
\end{equation}
Using the Sobolev interpolation and the uniform bound \eqref{uniform} , we have
\begin{equation}\label{s4}
\|\theta\|_{L^\infty}^2\|\nabla^2a\|_{L^2}^2\le \|\theta\|_{L^2}^{\frac12}\|\nabla^2\theta\|_{L^2}^{\frac32}\|\nabla^2a\|_{L^2}^{\frac12}\|\nabla^2a\|_{L^2}^{\frac32}
\le \|\theta\|_{L^2}^{\frac12}(\|\nabla^2\theta\|_{L^2}^2+\|\nabla^2a\|_{L^2}^2).
\end{equation}
Substituting \eqref{s4} into \eqref{s5} derives
\begin{equation*}
\begin{aligned}
\frac{d}{dt}&\int\nabla u\cdot\nabla^2adx+\frac34\int|\nabla^2a|^2dx\\&
\le C(\|\nabla^2u\|_{H^1}^2+\|\nabla^2\theta\|_{L^2}^2)+C(\|a\|_{H^1}+\|u\|_{H^1}+\|\theta\|_{H^1}+\|\theta\|_{L^2}^{\frac12})(\|\nabla^2a\|_{L^2}^2+\|\nabla^2\theta\|_{L^2}^2).
\end{aligned}
\end{equation*}
Thus we complete the proof.
\end{proof}

Combining all the estimates obtained in Lemmas \ref{l4}-\ref{l6}, we derive the following energy estimate.
\begin{lemm}\label{l5}
Under the assumptions of Theorem \ref{0them},  we define
$$\mathcal E_2^2(t)\overset{\Delta}{=}\|\nabla a\|_{H^1}^2+\|\nabla u\|_{H^1}^2+\|\nabla \theta\|_{H^1}^2+\delta\int\nabla u\cdot\nabla^2adx.$$
Then there exists a large time $T_2$, such that
\begin{equation}\label{b1}
\frac{d}{dt}\mathcal E_2^2(t)+c_0\left(\|\nabla^2u\|_{H^1}^2+\|\nabla^2\theta\|_{H^1}^2+\|\nabla^2a\|_{L^2}^2\right)\le 0, \quad t\ge T_2.
\end{equation}
Here $c_0$ is a positive constant, and $\delta$ is a small constant.
\end{lemm}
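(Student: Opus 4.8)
The plan is to form a linear combination of the three estimates \eqref{l4est}, \eqref{l5est} and \eqref{l6est} so that the full second-order dissipation is recovered on the left while every term on the right carries either the small parameter $\delta$ or a prefactor that decays in time. Concretely, I would take $2\times\eqref{l4est}+2\times\eqref{l5est}+\delta\times\eqref{l6est}$. The factor $2$ converts the $\frac12\frac{d}{dt}$ in the first two estimates into $\frac{d}{dt}$, so the time-derivative contributions assemble into exactly $\frac{d}{dt}\mathcal E_2^2(t)$ (note $\|\nabla a\|_{H^1}^2=\|\nabla a\|_{L^2}^2+\|\nabla^2 a\|_{L^2}^2$, and likewise for $u,\theta$). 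The surviving left-hand dissipation is then
\[
2\mu\|\nabla^2u\|_{H^1}^2+2(\mu+\lambda)\bigl(\|\nabla\dive u\|_{L^2}^2+\|\nabla^2\dive u\|_{L^2}^2\bigr)+2\|\nabla^2\theta\|_{H^1}^2+\tfrac{3\delta}{4}\|\nabla^2a\|_{L^2}^2 .
\]
Since $\mu>0$ and $2\mu+3\lambda\ge0$ force $\mu+\lambda>0$, the $\dive$ terms are nonnegative and may simply be kept. The essential point is that the density dissipation $\|\nabla^2a\|_{L^2}^2$ is supplied only by \eqref{l6est}, hence appears with the small coefficient $\tfrac{3\delta}{4}$.

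I would then treat the right-hand side in two stages. First, \eqref{l6est} contributes the term $\delta C\bigl(\|\nabla^2u\|_{H^1}^2+\|\nabla^2\theta\|_{L^2}^2\bigr)$, which carries \emph{no} decaying prefactor. This is absorbed by fixing $\delta$ small enough that $\delta C<2\mu$ and $\delta C<2$, leaving definite positive fractions of the velocity and temperature dissipation; the same choice keeps $\tfrac{3\delta}{4}>0$, so a positive constant multiplying $\|\nabla^2u\|_{H^1}^2+\|\nabla^2\theta\|_{H^1}^2+\|\nabla^2a\|_{L^2}^2$ remains. Crucially, $\delta$ is fixed here independently of time. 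Second, all remaining right-hand terms carry prefactors built from $\|(a,u,\theta)\|_{H^1}$ together with fractional powers such as $\|\nabla(a,u)\|_{L^2}^{1/2}$, $\|\nabla u\|_{L^2}^{1/4}$, $\|\theta\|_{L^2}^{1/4}$ and $\|\theta\|_{L^2}^{1/2}$, each multiplying precisely the dissipative quantity $\|\nabla^2u\|_{H^1}^2+\|\nabla^2\theta\|_{H^1}^2+\|\nabla^2a\|_{L^2}^2$. Invoking the large-time decay \eqref{decay22} from Theorem \ref{0them}, namely $\|(a,u,\theta)\|_{H^1}\le C(1+t)^{-3/4}$, every such prefactor tends to $0$ as $t\to\infty$. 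Hence there is a large time $T_2$ after which their sum is at most half the surviving dissipation constant, so they are absorbed; setting $c_0$ equal to that half yields \eqref{b1} for $t\ge T_2$. It is also worth recording that $|\delta\int\nabla u\cdot\nabla^2a\,dx|\le\tfrac{\delta}{2}(\|\nabla u\|_{L^2}^2+\|\nabla^2a\|_{L^2}^2)$, so the smallness of $\delta$ makes $\mathcal E_2^2(t)$ equivalent to $\|\nabla(a,u,\theta)\|_{H^1}^2$, which is what makes the inequality useful later.

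The main obstacle I anticipate is not any single hard estimate but the $\delta$-balance and the correct order of operations. The parameter $\delta$ must be small to absorb the prefactor-free term $\delta C\bigl(\|\nabla^2u\|_{H^1}^2+\|\nabla^2\theta\|_{L^2}^2\bigr)$, yet it is simultaneously the \emph{only} coefficient in front of the density dissipation $\tfrac{3\delta}{4}\|\nabla^2a\|_{L^2}^2$, so it cannot be sent to zero. The argument closes precisely because one fixes $\delta$ first, purely in terms of $\mu,\lambda$ and the constant $C$, to secure a time-independent positive dissipation constant, and only then chooses $T_2$ to kill the time-decaying prefactors. All the genuinely nonlinear work has already been carried out in Lemmas \ref{l4}--\ref{l6}, so no additional delicate estimate is required at this final step.
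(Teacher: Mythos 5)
Your proposal is correct and follows essentially the same route as the paper: add the first- and second-order estimates \eqref{l4est} and \eqref{l5est}, add $\delta$ times \eqref{l6est} with $\delta$ fixed small first (so the prefactor-free term $\delta C(\|\nabla^2u\|_{H^1}^2+\|\nabla^2\theta\|_{L^2}^2)$ is absorbed while keeping a positive $\delta$-coefficient on $\|\nabla^2a\|_{L^2}^2$), and then use the decay \eqref{decay22} to choose $T_2$ so that the time-decaying prefactors are dominated by the dissipation. The only difference from the paper is the cosmetic factor of $2$ in the linear combination and the resulting normalization of $c_0$.
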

\begin{proof}
Adding \eqref{l4est} with \eqref{l5est}, we have
\begin{equation}\label{p1}
\begin{aligned}
&\frac12\frac{d}{dt}(\|\nabla a\|_{H^1}^2+\|\nabla u\|_{H^1}^2+\|\nabla \theta\|_{H^1}^2)+\mu\|\nabla^2u\|_{H^1}^2+\|\nabla^2\theta\|_{H^1}^2\\&
\quad \le C(\|\nabla u\|_{L^2}^{\frac14}+\|\nabla(u,a)\|_{L^2}^{\frac12}+\|(a,u,\theta)\|_{H^1}+\|\theta\|_{L^2}^{\frac14})\left(\|\nabla^2u\|_{H^1}^2+\|\nabla^2a\|_{L^2}^2+\|\nabla^2\theta\|_{H^1}^2\right).
\end{aligned}
\end{equation}
Multiplying $\delta$ to \eqref{l6est} and adding with \eqref{p1}, we can choose $\delta$ being small enough to get
\begin{equation}\label{pp11}
\begin{aligned}
\frac12&\frac{d}{dt}\left(\|\nabla a\|_{H^1}^2+\|\nabla u\|_{H^1}^2+\|\nabla \theta\|_{H^1}^2+2\delta\int\nabla u\cdot\nabla^2adx\right)
+\frac{3}{4}(\mu\|\nabla^2u\|_{H^1}^2+\|\nabla^2\theta\|_{H^1}^2)+\frac{3}{4}\delta\|\nabla^2a\|_{L^2}^2\\&
\le C\left(\|\nabla u\|_{L^2}^{\frac14}+\|\nabla(u,a)\|_{L^2}^{\frac12}+\|(a,u,\theta)\|_{H^1}+\|\theta\|_{L^2}^{\frac14}+\|\theta\|_{L^2}^{\frac12}\right)\left(\|\nabla^2u\|_{H^1}^2+\|\nabla^2a\|_{L^2}^2+\|\nabla^2\theta\|_{H^1}^2\right).
\end{aligned}
\end{equation}
Thanks to the decay rate \eqref{decay22} obtained in Theorem \ref{0them}, we obtain
\begin{equation*}
\|\nabla u\|_{L^2}^{\frac14}+\|\nabla(u,a)\|_{L^2}^{\frac12}+\|(a,u,\theta)\|_{H^1}+\|\theta\|_{L^2}^{\frac14}+\|\theta\|_{L^2}^{\frac12}\le C(1+t)^{-\frac{3}{16}}.
\end{equation*}
Thus, there exists a large time $T_2>0$, such that
\begin{equation}\label{cdelta}
C\left(\|\nabla u\|_{L^2}^{\frac14}+\|\nabla(u,a)\|_{L^2}^{\frac12}+\|(a,u,\theta)\|_{H^1}+\|\theta\|_{L^2}^{\frac14}+\|\theta\|_{L^2}^{\frac12}\right)\le \frac14\min\{\mu,1,\delta\}
\end{equation}
holds on for all $t\ge T_2$. Therefore we obtain the following estimate
\begin{equation*}
\frac{d}{dt}\left(\|\nabla a\|_{H^1}^2+\|\nabla u\|_{H^1}^2
+\|\nabla \theta\|_{H^1}^2+2\delta\int\nabla u\cdot\nabla^2adx\right)
+\mu\|\nabla^2u\|_{H^1}^2+\|\nabla^2\theta\|_{H^1}^2+\delta\|\nabla^2a\|_{L^2}^2\le 0
\end{equation*}
Taking $c_0=\min\{\mu,1,\delta\}$, then we get
\begin{equation*}
\frac{d}{dt} \left(\|\nabla a\|_{H^1}^2+\|\nabla u\|_{H^1}^2
+\|\nabla \theta\|_{H^1}^2+\delta\int\nabla u\cdot\nabla^2adx\right)
+c_0\left(\|\nabla^2u\|_{H^1}^2+\|\nabla^2\theta\|_{H^1}^2+\|\nabla^2a\|_{L^2}^2\right)\le 0.
\end{equation*}
By virtue of the smallness of $\delta$, there are two constants $c_4$ and $C_4$ (independent of time) such that
\begin{equation}\label{b4}
c_4(\|\nabla u\|_{H^1}^2+\|\nabla a\|_{H^1}^2+\|\nabla\theta\|_{H^1}^2)\le \mathcal E_2^2(t)\le C_4(\|\nabla u\|_{H^1}^2+\|\nabla a\|_{H^1}^2+\|\nabla\theta\|_{H^1}^2).
\end{equation}
Thus, we complete the proof of this lemma.
\end{proof}

Next, let us establish the following decay estimate, which will give the proof for the Theorem \ref{Thm3}.
\begin{lemm}\label{lemma4}
Under the assumptions of Theorem \ref{0them}, there exists a large time $T_3$, such that
\begin{equation}\label{41}
\|\nabla a\|_{H^1}^2+\|\nabla u\|_{H^1}^2+\|\nabla\theta\|_{H^1}^2+\|\partial_ta\|_{L^2}^2+\|\partial_tu\|_{L^2}^2+\|\partial_t\theta\|_{L^2}^2\le C(1+t)^{-\frac52}
\end{equation}
holds on all $t\ge T_3$, here $C$ is a constant independent of time.
\end{lemm}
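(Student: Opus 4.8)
The plan is to combine the energy inequality \eqref{b1} with Schonbek's time--frequency splitting method. By the equivalence \eqref{b4} I write $\mathcal E_2^2(t)\sim\|\nabla(a,u,\theta)\|_{H^1}^2$, and I abbreviate the dissipation appearing in \eqref{b1} as $\mathcal D(t):=\|\nabla^2u\|_{H^1}^2+\|\nabla^2\theta\|_{H^1}^2+\|\nabla^2a\|_{L^2}^2$. The structural difficulty, already flagged in the introduction, is that $\mathcal D$ carries the full $H^1$ dissipation of $u$ and $\theta$ but only the single term $\|\nabla^2a\|_{L^2}^2$ for the density. First I would pass to the Fourier variable, where $\|\nabla(a,u,\theta)\|_{H^1}^2\sim\int_{\mathbb R^3}(|\xi|^2+|\xi|^4)(|\widehat a|^2+|\widehat u|^2+|\widehat\theta|^2)\,d\xi$, and introduce the shrinking low--frequency ball $S(t):=\{\xi:|\xi|\le R(t)\}$ with $R(t)^2:=k_0/(1+t)$ for a large constant $k_0$ to be fixed.

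The core is a lower bound for $\mathcal D$ on the complement $\{|\xi|>R(t)\}$. For $u$ and $\theta$ the elementary inequality $|\xi|^4+|\xi|^6\ge R^2(|\xi|^2+|\xi|^4)$ valid on $\{|\xi|>R\}$ gives
\begin{equation*}
\|\nabla^2(u,\theta)\|_{H^1}^2\ge R^2\|\nabla(u,\theta)\|_{H^1}^2-R^2\int_{S(t)}(|\xi|^2+|\xi|^4)(|\widehat u|^2+|\widehat\theta|^2)\,d\xi .
\end{equation*}
The density needs the damping trick: writing $\|\nabla^2a\|_{L^2}^2=\tfrac12\|\nabla^2a\|_{L^2}^2+\tfrac12\|\nabla^2a\|_{L^2}^2$, I would use one half with $|\xi|^4\ge R^2|\xi|^2$ on $\{|\xi|>R\}$ to recover $\tfrac{R^2}{2}\|\nabla a\|_{L^2}^2$, while the other half dominates $\tfrac{R^2}{2}\|\nabla^2a\|_{L^2}^2$ because $R(t)\le1$ once $t$ is large; this is exactly how a part of the density dissipation plays the role of a damping term for $\nabla a$. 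Combining, and estimating $|\xi|^2+|\xi|^4\le2R^2$ on $S(t)$, I obtain for all $t$ with $R(t)\le1$
\begin{equation*}
\mathcal D(t)\ge c\,R^2(t)\,\mathcal E_2^2(t)-C\,R^4(t)\,\|(a,u,\theta)(t)\|_{L^2}^2 ,
\end{equation*}
where the last term comes from bounding the low--frequency integral by $\|(a,u,\theta)\|_{L^2}^2$.

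Plugging this into \eqref{b1} yields the Gronwall--type inequality
\begin{equation*}
\frac{d}{dt}\mathcal E_2^2(t)+\frac{c\,c_0 k_0}{1+t}\,\mathcal E_2^2(t)\le\frac{C k_0^2}{(1+t)^2}\,\|(a,u,\theta)(t)\|_{L^2}^2 .
\end{equation*}
At this point I would insert the decay already granted by \eqref{decay22}, namely $\|(a,u,\theta)\|_{L^2}^2\le C(1+t)^{-3/2}$, so the right--hand side is $\le C(1+t)^{-7/2}$. Choosing $k_0$ so large that $c\,c_0 k_0>5/2$, multiplying by the integrating factor $(1+t)^{c c_0 k_0}$ and integrating over $[T_3,t]$ --- with $T_3\ge T_2$ taken large enough that $R(t)\le1$ for $t\ge T_3$ --- produces $\mathcal E_2^2(t)\le C(1+t)^{-5/2}$, i.e. $\|\nabla(a,u,\theta)\|_{H^1}^2\le C(1+t)^{-5/2}$. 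The time--derivative terms then follow directly from the equations \eqref{fullcns}: solving for $\partial_ta,\partial_tu,\partial_t\theta$, the linear contributions are bounded by $\|\nabla(a,u,\theta)\|_{H^1}\le C(1+t)^{-5/4}$, while $\|S_1\|_{L^2},\|S_2\|_{L^2},\|S_3\|_{L^2}$ are seen from \eqref{n8}, \eqref{n2} and \eqref{decay22} to decay strictly faster; hence $\|\partial_t(a,u,\theta)\|_{L^2}^2\le C(1+t)^{-5/2}$, which finishes the proof.

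I expect the density frequency--splitting to be the main obstacle. Since $\mathcal D$ contains no $\|\nabla^3a\|_{L^2}$ term, a direct high--frequency estimate recovers only $\|\nabla a\|_{L^2}$ and not the full $\|\nabla a\|_{H^1}$ needed to reconstruct $\mathcal E_2^2$; the splitting of $\|\nabla^2a\|_{L^2}^2$ into a ``recovery'' half and a ``damping'' half, which is legitimate only after $R(t)\le1$, is what resolves this and is precisely where the large time $T_3$ must be invoked.
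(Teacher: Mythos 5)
Your proposal is correct and follows essentially the same route as the paper: Schonbek's time--frequency splitting applied to the energy inequality \eqref{b1}, with half of the $\|\nabla^2a\|_{L^2}^2$ dissipation retained as a damping term to upgrade $\|\nabla a\|_{L^2}^2$ to $\|\nabla a\|_{H^1}^2$ once the splitting radius satisfies $R(t)\le 1$, followed by a Gronwall argument using the $(1+t)^{-3/2}$ decay from \eqref{decay22} and then reading off $\partial_t(a,u,\theta)$ from the equations. The only (harmless) cosmetic differences are that your low--frequency remainder is phrased in terms of $\|(a,u,\theta)\|_{L^2}^2$ rather than $\|(u,\theta)\|_{H^1}^2+\|a\|_{L^2}^2$, and you take the Gronwall exponent merely larger than $5/2$ instead of fixing it to be exactly $3$.
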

\begin{proof}
In order to obtain the time decay rate \eqref{41}, our method
here is to use the Fourier splitting method (by Schonbek\cite{schonbek1}),
which has been applied to obtain decay rate for the incompressible Navier-Stokes equations
in higher order derivative norm (cf.\cite{{schonbek2},{Schonbek-Wiegner}}).
The difficulty, arising from the compressible Navier-Stokes equations, is the appearance of density that obeys the
transport equation rather than diffusive one. To get rid of
this difficulty, our idea is to rewrite the inequality \eqref{b1} as follows
\begin{equation}\label{b2}
\frac{d}{dt}\mathcal E_2^2(t)+\frac{c_0}{2}\left(\|\nabla^2u\|_{H^1}^2+\|\nabla^2\theta\|_{H^1}^2+\|\nabla^2a\|_{L^2}^2\right)+\frac{c_0}{2}\|\nabla^2a\|_{L^2}^2\le 0, \quad t\ge T_2.
\end{equation}
Define $S_0:=\{\xi\in\mathbb R^3||\xi|\le(\frac{R}{1+t})^{\frac12}\}$, then we split the phase space $\mathbb R^3$ into two time-dependent regions, $R$ is a constant defined below. It is easy to see
\begin{equation*}
\begin{aligned}
\int_{\mathbb R^3}|\nabla^3u|^2dx&\ge\int_{\mathbb R^3/S_0}|\xi|^6|\hat u|^2d\xi
\ge\frac{R}{1+t}\int_{\mathbb R^3}|\xi|^4|\hat u|^2d\xi\\&
\ge \frac{R}{1+t}\int_{\mathbb R^3/S_0}|\xi|^4|\hat u|^2d\xi-\frac{R}{1+t}\int_{S_0}|\xi|^4|\hat u|^2d\xi\\&
\ge \frac{R}{1+t}\int_{\mathbb R^3/S_0}|\xi|^4|\hat u|^2d\xi-\frac{R^2}{(1+t)^2}\int_{\mathbb R^3/S_0}|\xi|^2|\hat u|^2d\xi,
\end{aligned}
\end{equation*}
which means
$$\|\nabla^3u\|_{L^2}^2\ge\frac{R}{1+t}\|\nabla^2u\|_{L^2}^2-\frac{R^2}{(1+t)^2}\|\nabla u\|_{L^2}^2.$$
Hence, in the same way, we obtain
\begin{equation}\label{b3}
\begin{aligned}
&\|\nabla^2u\|_{H^1}^2+\|\nabla^2\theta\|_{H^1}^2+\|\nabla^2a\|_{L^2}^2\\
&\ge \frac{R}{1+t}(\|\nabla u\|_{H^1}^2+\|\nabla\theta\|_{H^1}^2+\|\nabla a\|_{L^2}^2)-\frac{R^2}{(1+t)^2}(\|u\|_{H^1}^2+\|\theta\|_{H^1}^2+\|a\|_{L^2}^2).
\end{aligned}
\end{equation}
Substituting \eqref{b3} into \eqref{b2}, we have
\begin{equation*}
\frac{d}{dt}\mathcal E_2^2(t)+\frac{c_0R}{2(1+t)}(\|\nabla u\|_{H^1}^2+\|\nabla\theta\|_{H^1}^2+\|\nabla a\|_{L^2}^2)+\frac{c_0}{2}\|\nabla^2a\|_{L^2}^2
\le \frac{c_0R^2}{2(1+t)^2}(\|u\|_{H^1}^2+\|\theta\|_{H^1}^2+\|a\|_{L^2}^2).
\end{equation*}
The term $\|\nabla^2a\|_{L^2}^2$ on the left-hand side of above inequality plays a role of damping term, hence, it holds on
\begin{equation*}
\frac{d}{dt}\mathcal E_2^2(t)+\frac{c_0R}{2(1+t)}(\|\nabla u\|_{H^1}^2+\|\nabla\theta\|_{H^1}^2+\|\nabla a\|_{H^1}^2)
\le \frac{c_0R^2}{2(1+t)^2}(\|u\|_{H^1}^2+\|\theta\|_{H^1}^2+\|a\|_{L^2}^2),
\end{equation*}
for all $t\ge T_3:=\max\{T_2, R-1\}$.
By the equivalent relation \eqref{b4} and the decay result \eqref{decay2} in Theorem \ref{0them}, we have
\begin{equation*}
\frac{d}{dt}\mathcal E_2^2(t)+\frac{c_0R}{2C_2(1+t)}\mathcal E_2^2(t)\le \frac{c_0R^2}{2}C(1+t)^{-\frac32-2}.
\end{equation*}
Choosing $R=\frac{6C_4}{c_0}$ and multiplying the resulting inequality by $(1+t)^3$, it follows that
\begin{equation*}
\frac{d}{dt}\left[(1+t)^3\mathcal E_2^2(t)\right]\le C(1+t)^{-\frac12}.
\end{equation*}
Integrating the above inequality over $[T_3,t]$, we obtain
\begin{equation*}
\mathcal E_2^2(t)\le (1+t)^{-3}\left[(1+T_3)^3\mathcal E_2^2(T_3)+C(1+t)^{\frac12}-C(1+T_3)^{\frac12}\right],
\end{equation*}
which, together with the uniform bound \eqref{uniform}
and estimate \eqref{thetabound} in Remark \ref{nabla2theta}, it holds on
$$\mathcal E_2^2(t)\le C(1+t)^{-\frac52}.$$
Using the equivalent relation \eqref{b4} again, we have for all $t\ge T_3:=\max\{T_2,\frac{6C_4}{c_0}-1\}$
\begin{equation}\label{p22}
\|\nabla a\|_{H^1}^2+\|\nabla u\|_{H^1}^2+\|\nabla \theta\|_{H^1}^2\le C(1+t)^{-\frac52}.
\end{equation}

Last, according to the equation \eqref{fullcns} and the decay results obtained before, we get
\begin{equation}\label{p3}
\begin{aligned}
\|\partial_ta\|_{L^2}&\le\|\dive u\|_{L^2}+\|S_1\|_{L^2}
\le \|\nabla u\|_{L^2}+C(\|a\|_{H^1}+\|u\|_{H^1})(\|\nabla^2u\|_{L^2}+\|\nabla^2a\|_{L^2})
\le C(1+t)^{-\frac54}.
\end{aligned}
\end{equation}
By \eqref{m2}-\eqref{m4}, we get
\begin{equation}\label{p4}
\begin{aligned}
\|\partial_tu\|_{L^2}&\le \mu\|\Delta u\|_{L^2}+(\mu+\lambda)\|\nabla\dive u\|_{L^2}+\|\nabla a\|_{L^2}+\|\nabla \theta\|_{L^2}+\|S_2\|_{L^2}\\&
\le C(\|\nabla^2u\|_{L^2}+\|\nabla a\|_{L^2}+\|\nabla\theta\|_{L^2})\\&
\quad +C(\|u\|_{H^1}+\|a\|_{L^2}^{\frac14}+\|a\|_{H^1}+\|\theta\|_{H^1})(\|\nabla^2u\|_{L^2}+\|\nabla^2a\|_{L^2}+\|\nabla^2\theta\|_{L^2})\\&
\le C(1+t)^{-\frac54}.
\end{aligned}
\end{equation}
According to \eqref{p2}, we get
\begin{equation}\label{p5}
\begin{aligned}
\|\partial_t\theta\|_{L^2}&\le \|\Delta\theta\|_{L^2}+\|\dive u\|_{L^2}+\|S_3\|_{L^2}\\&
\le \|\nabla^2\theta\|_{L^2}+\|\nabla u\|_{L^2}+C(\|\theta\|_{H^1}+\|u\|_{H^1}+\|\nabla u\|_{L^2}^{\frac12}+\|a\|_{L^2}^{\frac14})(\|\nabla^2u\|_{L^2}+\|\nabla^2\theta\|_{L^2})\\&
\le C(1+t)^{-\frac54}.
\end{aligned}
\end{equation}
The combination of \eqref{p22}-\eqref{p5} completes the proof of this lemma.
\end{proof}

\subsection{The proof of Theorem \ref{1theo}.}

\quad
First, we derive the evolution of the negative Sobolev norms of the solution of system \eqref{fullcns0}. 
Similar to the estimates in Section \ref{3}, we need to restrict $s\in(0,3/2)$ to estimate the nonlinear terms.

\begin{lemm}
For $s\in(0,1/2]$, we have
\begin{equation}\label{p7}
\begin{aligned}
\frac{d}{dt}&\int (|\G a^2|+|\G u|^2+|\G\theta|^2)dx+C(\|\nabla\G u\|_{L^2}^2+\|\nabla\G\theta\|_{L^2}^2)\\&\lesssim (\|\nabla a\|_{H^1}^2+\|\nabla\theta\|_{H^1}^2+\|\nabla u\|_{H^2}^2)(\|\G a\|_{L^2}+\|\G u\|_{L^2}+\|\G\theta\|_{L^2}),
\end{aligned}
\end{equation}
and for $s\in(1/2,3/2)$, we have
\begin{equation}\label{p8}
\begin{aligned}
&\frac{d}{dt}\int (|\G a^2|+|\G u|^2+|\G\theta|^2)dx+C(\|\nabla\G u\|_{L^2}^2+\|\nabla\G\theta\|_{L^2}^2)\\&
\quad\lesssim \|(a,u,\theta)\|_{L^2}^{s-1/2}\|\nabla(a,u,\theta)\|_{L^2}^{3/2-s}(\|\nabla(u,\theta)\|_{H^1}+\|\nabla a\|_{L^2})(\|\G a\|_{L^2}+\|\G u\|_{L^2}+\|\G\theta\|_{L^2})\\&
\qquad +\|\nabla u\|_{L^2}^{s+1/2}\|\nabla^2u\|_{L^2}^{3/2-s}\|\G\theta\|_{L^2}.
\end{aligned}
\end{equation}
\end{lemm}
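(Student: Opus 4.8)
The plan is to mimic the negative-Sobolev estimate of Lemma~\ref{l3}, now carrying the temperature equation along. First I would apply $\G=\Lambda^{-s}$ to each of the three equations in \eqref{fullcns}, test the resulting identities against $\G a$, $\G u$ and $\G\theta$ respectively, integrate over $\mathbb R^3$ and add. The viscous and heat-conduction terms yield the dissipation $\mu\|\nabla\G u\|_{L^2}^2+(\mu+\lambda)\|\dive\G u\|_{L^2}^2+\|\nabla\G\theta\|_{L^2}^2$ on the left. The crucial algebraic point is that the linear coupling terms cancel in pairs: since $\G$ commutes with derivatives, integration by parts gives $\int\G\nabla a\cdot\G u\,dx=-\int\G a\,\G\dive u\,dx$, which annihilates the term $\int\G\dive u\,\G a\,dx$ coming from the continuity equation; likewise $\int\G\nabla\theta\cdot\G u\,dx$ cancels $\int\G\dive u\,\G\theta\,dx$ from the temperature equation. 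After this cancellation one is left with
\begin{equation*}
\begin{aligned}
\frac12\frac{d}{dt}&\big(\|\G a\|_{L^2}^2+\|\G u\|_{L^2}^2+\|\G\theta\|_{L^2}^2\big)+\mu\|\nabla\G u\|_{L^2}^2+(\mu+\lambda)\|\dive\G u\|_{L^2}^2+\|\nabla\G\theta\|_{L^2}^2\\
&= \int\G S_1\,\G a\,dx+\int\G S_2\cdot\G u\,dx+\int\G S_3\,\G\theta\,dx,
\end{aligned}
\end{equation*}
so everything reduces to estimating the three nonlinear integrals.

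For each of these I would use the embedding (inequality \eqref{a.3}) $\|\G f\|_{L^2}\lesssim\|f\|_{L^{1/(1/2+s/3)}}$, valid precisely for $s\in(0,3/2)$ --- this is where the restriction on $s$ enters --- together with Cauchy--Schwarz, so that e.g. $\int\G S_1\,\G a\,dx\lesssim\|S_1\|_{L^{1/(1/2+s/3)}}\|\G a\|_{L^2}$. Writing $p=\tfrac{6}{3+2s}$, Hölder's inequality splits each quadratic term in $S_1,S_2,S_3$ into a factor in $L^{3/s}$ and a factor in $L^2$ (using $\tfrac1p=\tfrac s3+\tfrac12$), and the lower and upper density bounds turn the smooth coefficients $h(a),g(a)$ into harmless $L^\infty$ multipliers. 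The two regimes of $s$ are then distinguished exactly by the Gagliardo--Nirenberg interpolation used to control the $L^{3/s}$ factor: for $s\in(0,1/2]$ one has $3/s\ge6$ and uses $\|f\|_{L^{3/s}}\lesssim\|\nabla f\|_{L^2}^{1/2+s}\|\nabla^2 f\|_{L^2}^{1/2-s}$, which produces the right-hand side of \eqref{p7} after Young's inequality, the resulting second-order derivative quantities being dominated by $\|\nabla a\|_{H^1}^2+\|\nabla\theta\|_{H^1}^2+\|\nabla u\|_{H^2}^2$ --- the combination that is time-integrable by \eqref{uniform}; for $s\in(1/2,3/2)$ one has $2<3/s<6$ and uses $\|f\|_{L^{3/s}}\lesssim\|f\|_{L^2}^{s-1/2}\|\nabla f\|_{L^2}^{3/2-s}$ instead, which produces the mixed norms appearing in \eqref{p8}.

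The most delicate term, and the one I expect to be the main obstacle, is the viscous heat source $g(a)\big(2\mu|Du|^2+\lambda(\dive u)^2\big)$ in $S_3$, tested against $\G\theta$. Being genuinely quadratic in $\nabla u$, it cannot be split as (low-order factor)$\times$(high-order factor) in the same way as the transport terms; instead I would write $\||Du|^2\|_{L^p}\lesssim\|\nabla u\|_{L^{2p}}^2$ with $2p=\tfrac{12}{3+2s}$ and interpolate $\|\nabla u\|_{L^{2p}}\lesssim\|\nabla u\|_{L^2}^{1-\sigma}\|\nabla^2 u\|_{L^2}^{\sigma}$ with $\sigma=\tfrac{3-2s}{4}$. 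A short computation gives the exponents $s+\tfrac12$ and $\tfrac32-s$, which is precisely the isolated term $\|\nabla u\|_{L^2}^{s+1/2}\|\nabla^2 u\|_{L^2}^{3/2-s}\|\G\theta\|_{L^2}$ on the right of \eqref{p8} (and, after Young's inequality, is absorbed into the $\|\nabla u\|_{H^2}^2$ part of \eqref{p7} when $s\le1/2$). The remaining terms of $S_2$ and $S_3$ --- the convection terms $u\cdot\nabla u$ and $\dive(\theta u)$, the quasilinear terms $h(a)[\mu\Delta u+(\mu+\lambda)\nabla\dive u]$ and $h(a)\Delta\theta$, and the pressure corrections $h(a)(\nabla a+\nabla\theta)$ and $g(a)\nabla(a\theta)$ --- are all of transport/product type and succumb to the same Hölder-plus-interpolation scheme, the only bookkeeping point being to always place the highest-order derivative factor in $L^2$ so that no norm beyond $\|\nabla u\|_{H^2}$, $\|\nabla a\|_{H^1}$, $\|\nabla\theta\|_{H^1}$ is needed. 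Summing the bounds for the three integrals and, where necessary, absorbing the dissipation-coupled contributions into $\mu\|\nabla\G u\|_{L^2}^2+\|\nabla\G\theta\|_{L^2}^2$ via Young's inequality yields \eqref{p7} and \eqref{p8}.
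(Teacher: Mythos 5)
Your proposal follows essentially the same route as the paper: apply $\G$ to the three perturbation equations, test against $\G a$, $\G u$, $\G\theta$, use the cancellation of the linear coupling terms to isolate the dissipation, and then estimate the eight nonlinear integrals via the Riesz-potential bound \eqref{a.3} combined with H\"older and the two regime-dependent Gagliardo--Nirenberg interpolations of the $L^{3/s}$ factor, which is exactly how the paper obtains \eqref{p7} and \eqref{p8}. Your treatment of the viscous heating term (symmetric splitting $\|\nabla u\|_{L^{2p}}^2$ rather than the paper's asymmetric $\|\nabla u\|_{L^{3/s}}\|\nabla u\|_{L^2}$) yields the same exponents $s+\tfrac12$ and $\tfrac32-s$, so the argument is correct and equivalent.
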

\begin{proof}
Applying $\G$ to $\eqref{fullcns}_1$, $\eqref{fullcns}_2$, $\eqref{fullcns}_3$ and multiplying the resulting by $\G a$, $\G u$, $\G\theta$ respectively, summing up and integrating over $\mathbb R^3$ by parts, we obtain
\begin{equation}\label{p6}
\begin{aligned}
&\frac12\frac{d}{dt}\int(|\G a|^2+|\G u|^2+|\G\theta|^2)dx+\mu\int|\nabla\G u|^2dx+(\mu+\lambda)\int|\dive\G u|^2dx+\int|\nabla\G\theta|^2dx\\&
\quad=\int\G(-a\dive u-u\cdot\nabla a)\cdot\G adx+\int\G(-u\cdot\nabla u)\cdot\G udx\\&
\qquad -\int\G(h(a)[\mu\Delta u+(\mu+\lambda)\nabla\dive u])\cdot\G udx+\int\G(h(a)(\nabla a+\nabla\theta))\cdot\G u\\&
\qquad -\int\G(g(a)\nabla(a\theta))\cdot\G udx-\int\G(\dive(\theta u))\cdot\G\theta dx\\&
\qquad +\int\G(g(a)(2\mu|Du|^2+\lambda(\dive u)^2))\cdot\G\theta dx-\int\G(h(a)\Delta\theta)\cdot\G\theta dx
\overset{\Delta}{=}\sum_{i=1}^8 J_i.
\end{aligned}
\end{equation}
Similar to the estimates in Lemma \ref{l3}, when $s\in(0,1/2]$, 
using \eqref{a.3}, Sobolev interpolation inequality, H\"{o}lder inequality and Young inequality, we obtain
\begin{equation*}
\begin{aligned}
&J_1\lesssim (\|\nabla a\|_{H^1}^2+\|\nabla u\|_{H^1}^2)\|\G a\|_{L^2},\\&
J_2\lesssim \|\nabla u\|_{H^1}^2\|\G u\|_{L^2},\\&
J_3\lesssim (\|\nabla a\|_{H^1}^2+\|\nabla^2u\|_{L^2}^2)\|\G u\|_{L^2},\\&
J_4\lesssim (\|\nabla a\|_{H^1}^2+\|\nabla\theta\|_{L^2}^2)\|\G u\|_{L^2},\\&
J_5\lesssim (\|\nabla\theta\|_{H^1}^2+\|\nabla a\|_{H^1}^2)\|\G u\|_{L^2},\\&
J_6\lesssim (\|\nabla \theta\|_{H^1}^2+\|\nabla u\|_{H^1}^2)\|\G\theta\|_{L^2},\\&
J_7\lesssim \|\nabla u\|_{H^2}^2\|\G\theta\|_{L^2},\\&
J_8\lesssim (\|\nabla a\|_{H^1}^2+\|\nabla^2\theta\|_{L^2}^2)\|\G\theta\|_{L^2}.
\end{aligned}
\end{equation*}
Substituting above estimates from $J_1$ to $J_8$ into \eqref{p6}, we obtain the estimate \eqref{p7}.

Next, if $s\in(1/2,3/2)$, we should deal with $J_1\sim J_8$ in a different way, 
using the different Sobolev interpolation, it is easy to check that
\begin{equation*}
\begin{aligned}
&J_1\lesssim (\|a\|_{L^{3/s}}\|\nabla u\|_{L^2}+\|u\|_{L^{3/s}}\|\nabla a\|_{L^2})\|\G a\|_{L^2}\\&
\quad \lesssim(\|a\|_{L^2}^{s-1/2}\|\nabla a\|_{L^2}^{3/2-s}\|\nabla u\|_{L^2}+\|u\|_{L^2}^{s-1/2}\|\nabla u\|_{L^2}^{3/2-s}\|\nabla a\|_{L^2})\|\G a\|_{L^2},\\&
J_2\lesssim \|u\|_{L^2}^{s-1/2}\|\nabla u\|_{L^2}^{3/2-s}\|\nabla u\|_{L^2}\|\G u\|_{L^2},\\&
J_3\lesssim \|a\|_{L^2}^{s-1/2}\|\nabla a\|_{L^2}^{3/2-s}\|\nabla^2 u\|_{L^2}\|\G u\|_{L^2},\\&
J_4\lesssim \|a\|_{L^2}^{s-1/2}\|\nabla a\|_{L^2}^{3/2-s}(\|\nabla a\|_{L^2}+\|\nabla\theta\|_{L^2})\|\G u\|_{L^2},\\&
J_5\lesssim (\|a\|_{L^2}^{s-1/2}\|\nabla a\|_{L^2}^{3/2-s}\|\nabla \theta\|_{L^2}+\|\theta\|_{L^2}^{s-1/2}\|\nabla \theta\|_{L^2}^{3/2-s}\|\nabla a\|_{L^2})\|\G u\|_{L^2},\\&
J_6\lesssim  (\|\theta\|_{L^2}^{s-1/2}\|\nabla \theta\|_{L^2}^{3/2-s}\|\nabla u\|_{L^2}+\|u\|_{L^2}^{s-1/2}\|\nabla u\|_{L^2}^{3/2-s}\|\nabla \theta\|_{L^2})\|\G \theta\|_{L^2},\\&
J_7\lesssim \|\nabla u\|_{L^2}^{s-1/2}\|\nabla^2 u\|_{L^2}^{3/2-s}\|\nabla u\|_{L^2}\|\G\theta\|_{L^2},\\&
J_8\lesssim \|a\|_{L^2}^{s-1/2}\|\nabla a\|_{L^2}^{3/2-s}\|\nabla^2 \theta\|_{L^2}\|\G\theta\|_{L^2}.
\end{aligned}
\end{equation*}
Plugging the estimates from $J_1$ to $J_8$ into \eqref{p6}, we deduce \eqref{p8}. 
Thus we complete the proof of this lemma.
\end{proof}

Next, let us prove the decay result when the initial data $(a_0,u_0,\theta_0)\in\dot H^{-s}$, $s\in (0,3/2)$. 
Similar to the proof in section \ref{3}, we split the proof into two parts. Before that, we need to introduce the following proposition first, which comes from \cite{he-huang-wang2}.
\begin{prop}\label{prop2}
Under the assumptions of Theorem \ref{0them}, there holds
\begin{equation}\label{prop2est}
\frac{d}{dt}X_2(t)+C\left(\|\nabla^2u\|_{L^2}^2+\|\nabla u\|_{L^2}^2+\|\nabla a\|_{L^2}^2+\|\nabla\dot u\|_{L^2}^2+\|\nabla\theta\|_{L^2}^2+\|\nabla^2\theta\|_{L^2}^2\right)\le 0,
\end{equation}
where $$X_2(t)\sim \|u\|_{H^1}^2+\|a\|_{H^1}^2+\|\dot u\|_{L^2}^2+\|\theta\|_{H^1}^2.$$
\end{prop}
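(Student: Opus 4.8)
This is the full-system counterpart of the isentropic dissipation inequality in Proposition \ref{pro1}, so the plan is to follow the same material-derivative energy method, now carrying the additional temperature equation. The functional $X_2(t)$ will be assembled as a weighted sum of several energy identities, arranged so that after summation all nonlinear contributions are absorbed into the dissipation using the uniform-in-time regularity bound \eqref{uniform} together with the lower bounds $\rho\ge c_1>0$, $T\ge c_2>0$ and the $C^\alpha$-bound on $\rho$ from Theorem \ref{0them}. The guiding principle is that, since the density solves a transport equation with no intrinsic smoothing, the dissipation of $a$ cannot be read off from an evolution equation for $a$; instead one replaces the velocity time derivative by the material acceleration $\dot u=\partial_tu+u\cdot\nabla u$ (which is exactly the quantity appearing in $X_2$) and recovers $\nabla a$ algebraically from the momentum equation $\rho\dot u+\nabla P(\rho,T)=\dive\mathbb S(u)$.

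First I would record the first-order and temperature estimates. Testing the momentum equation against $u$ and using the continuity equation to handle the $\rho|u|^2$ and pressure terms yields control of $\tfrac{d}{dt}\|\sqrt\rho\,u\|_{L^2}^2$ together with the dissipation $\|\nabla u\|_{L^2}^2$; testing the temperature equation against $\theta$ produces $\|\nabla\theta\|_{L^2}^2$, and testing it against $-\Delta\theta$ produces the second-order dissipation $\|\nabla^2\theta\|_{L^2}^2$. The nonlinear terms collected in $S_3$ from \eqref{S} (the transport term $\dive(\theta u)$, the viscous heating $g(a)(2\mu|Du|^2+\lambda(\dive u)^2)$, and $h(a)\Delta\theta$) are quadratic or cubic in $(a,u,\theta)$ and their derivatives, hence absorbable by \eqref{uniform} and Gagliardo--Nirenberg interpolation, with the $C^\alpha$-bound on $\rho$ keeping $h(a),g(a)$ and their gradients under control.

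The heart of the argument is the pair of estimates generating the $\dot u$ and $a$ dissipations. Applying the material derivative $\dot{\,}$ to the momentum equation and testing against $\dot u$ gives an identity of the form $\tfrac{d}{dt}\tfrac12\|\sqrt\rho\,\dot u\|_{L^2}^2+\mu\|\nabla\dot u\|_{L^2}^2+\cdots=(\text{commutator and pressure terms})$, which simultaneously supplies the $\|\dot u\|_{L^2}^2$ entry of $X_2$ and the dissipation $\|\nabla\dot u\|_{L^2}^2$; the commutators $[\,\dot{\,}\,,\Delta]$ and $[\,\dot{\,}\,,\nabla\dive]$ are controlled by $\|\nabla u\|_{L^\infty}$ and thus by \eqref{uniform} and interpolation. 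To extract $\|\nabla a\|_{L^2}^2$ I would add to $X_2$ an auxiliary cross term $\delta\int\rho u\cdot\nabla a\,dx$: differentiating it and substituting $\partial_t(\rho u)=\dive\mathbb S(u)-\nabla P-\dive(\rho u\otimes u)$ produces, through $\nabla P=T\nabla a+\rho\nabla\theta$ and $T\ge c_2$, a favorable term $-c_2\|\nabla a\|_{L^2}^2$, while the coupling $-\delta\int\rho\nabla\theta\cdot\nabla a\,dx$ is split by Young's inequality into a small multiple of $\|\nabla a\|_{L^2}^2$ and a multiple of $\|\nabla\theta\|_{L^2}^2$ controlled by the temperature dissipation. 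Finally the second-order velocity dissipation $\|\nabla^2u\|_{L^2}^2$ is recovered from the Lamé-elliptic structure $\mu\Delta u+(\mu+\lambda)\nabla\dive u=\rho\dot u+\nabla P$, which ties $\|\nabla^2u\|_{L^2}$ to $\|\dot u\|_{L^2}$, $\|\nabla a\|_{L^2}$ and $\|\nabla\theta\|_{L^2}$.

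The main obstacle will be closing the density dissipation with a constant $C$ \emph{uniform in time}. Because $\nabla a$ is recovered only through the momentum equation, its dissipation is inevitably coupled, via $P=\rho T$, both to the velocity dissipation and to $\|\nabla\theta\|_{L^2}^2$, so one must keep the velocity and temperature dissipations with coefficients large enough that the cross terms generated by the $\delta$-correction and by the elliptic recovery of $\|\nabla^2u\|_{L^2}^2$ are genuinely absorbed; this interdependence is precisely what dictates the relative weighting in $X_2$ and forces $\delta$ to be chosen small. The remaining genuinely nonlinear terms — the convection $u\cdot\nabla u$, the variable coefficients $h(a),g(a)$, and the viscous heating — are handled by Hölder and interpolation against the uniform $H^2$-bound on $(a,u)$ and $H^1$-bound on $\theta$ in \eqref{uniform}, so that the absorbed contributions do not degrade the sign of any dissipative term and \eqref{prop2est} follows.
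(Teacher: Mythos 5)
The first thing to note is that the paper does not prove this statement at all: Proposition \ref{prop2} is introduced with the words ``which comes from \cite{he-huang-wang2}'' and is quoted as a black box, exactly as Proposition \ref{pro1} is quoted from \cite{he-huang-wang} in the isentropic case. So there is no in-paper proof to compare your argument against; what can be judged is whether your sketch would actually establish the inequality claimed in the cited source.

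Your architecture is the standard and correct skeleton for this kind of Hoff-type estimate: $L^2$ energy identities for $u$ and $\theta$, the material-derivative estimate $\tfrac{d}{dt}\|\sqrt{\rho}\,\dot u\|_{L^2}^2+\mu\|\nabla\dot u\|_{L^2}^2\le\cdots$ to generate the $\dot u$ entries, a small cross term to extract $\|\nabla a\|_{L^2}^2$ through the pressure (with $T\ge c_2$ giving the sign), and Lam\'e ellipticity to recover $\|\nabla^2u\|_{L^2}^2$. The genuine gap is the sentence asserting that the nonlinear terms are ``absorbable by \eqref{uniform} and Gagliardo--Nirenberg interpolation.'' The bound \eqref{uniform} gives \emph{boundedness}, not smallness, of $\|(a,u)\|_{H^2}$ and $\|\theta\|_{H^1}$, and the inequality \eqref{prop2est} is asserted for \emph{all} $t\ge0$ with a time-independent constant and \emph{no} source term on the right-hand side. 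A typical bad term, e.g. the commutator contribution $\int \nabla u\,\nabla\dot u\,\nabla u\,dx\lesssim\|\nabla u\|_{L^3}\|\nabla u\|_{L^6}\|\nabla\dot u\|_{L^2}$ or the variable-coefficient term $\|h(a)\|_{L^\infty}\|\nabla^2u\|_{L^2}\|\nabla\dot u\|_{L^2}$, carries a prefactor that is merely $O(1)$ under \eqref{uniform}, so it cannot be hidden in $c\|\nabla\dot u\|_{L^2}^2$ without a smallness mechanism. This is precisely the hard point in the large-data setting: in \cite{he-huang-wang,he-huang-wang2} it is resolved not by interpolation against \eqref{uniform} alone but by exploiting the effective viscous flux and vorticity decomposition together with the quantitative $C^\alpha$ bound on the density (which is why that hypothesis appears in Theorem \ref{0them} at all), and this is where essentially all of the work of the cited proof lives. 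Note also that the contrast with Lemmas \ref{diss} and \ref{l5} in this paper is instructive: there the authors \emph{do} obtain absorption by interpolation, but only after restricting to $t\ge T_1$ (resp. $t\ge T_2$) so that the decayed lower-order norms supply the missing smallness; Proposition \ref{prop2} has no such time restriction, so your absorption step, as written, would fail.
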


With the Proposition \ref{prop2} at hand, we obtain the following decay result.

\begin{prop}\label{prop3}
Under the sssumptions of Theorem \ref{1theo}, when $s\in (0,1/2]$, then for all $t\ge 0$, we have
\begin{equation*}
\|\G a(t)\|_{L^2}^2+ \|\G u(t)\|_{L^2}^2+\|\G\theta(t)\|_{L^2}^2+\|\G \dot u(t)\|_{L^2}^2\le C,
\end{equation*}
\begin{equation}\label{prop30}
\|a(t)\|_{H^1}^2+\|u(t)\|_{H^1}^2+\|\theta(t)\|_{H^1}^2+\|\dot u(t)\|_{L^2}^2\le C(1+t)^{-s},
\end{equation}
where $C$ is a constant independent of time.
\end{prop}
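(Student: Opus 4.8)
The plan is to imitate the two-step argument of Proposition~\ref{prop1}: first show that the negative Sobolev norms $\|\G a\|_{L^2}$, $\|\G u\|_{L^2}$, $\|\G\theta\|_{L^2}$ and $\|\G\dot u\|_{L^2}$ stay uniformly bounded in time, and then convert this boundedness into the algebraic decay $(1+t)^{-s}$ by coupling the interpolation inequality~\eqref{a.2} with the dissipation inequality~\eqref{prop2est} of Proposition~\ref{prop2}. The only genuinely new feature relative to the isentropic setting is the temperature field and, above all, the product $a\theta$ produced by the pressure law $P=\rho T$.

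First I would integrate the negative Sobolev estimate~\eqref{p7} over $[0,t)$. Setting $N(t):=\sup_{\tau\in[0,t)}(\|\G a\|_{L^2}+\|\G u\|_{L^2}+\|\G\theta\|_{L^2})$, the right-hand side is bounded by $N(t)\int_0^t(\|\nabla a\|_{H^1}^2+\|\nabla\theta\|_{H^1}^2+\|\nabla u\|_{H^2}^2)\,d\tau$, and the uniform bound~\eqref{uniform} makes this time integral finite independently of $t$ (using $\|\nabla a\|_{H^1}^2\le\|a\|_{H^2}^2$). Since $(a_0,u_0,\theta_0)\in\dot H^{-s}$ gives finite initial negative norms, this produces an inequality of the form $N(t)^2\lesssim C_0+C\,N(t)$, whence $N(t)\le C$ and therefore $\|\G a\|_{L^2}^2+\|\G u\|_{L^2}^2+\|\G\theta\|_{L^2}^2\le C$ for all $t\ge0$.

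Next I would control $\|\G\dot u\|_{L^2}$ through the momentum equation written as $\rho\dot u=\mu\Delta u+(\mu+\lambda)\nabla\dive u-\nabla P$ with $P-1=a+\theta+a\theta$. Because $u,a\in\dot H^{-s}\cap H^2$ and $\theta\in\dot H^{-s}\cap H^2$ (the $H^2$ regularity of $\theta$ coming from Remark~\ref{nabla2theta} and~\eqref{thetabound}), interpolation yields $\Delta u,\nabla a,\nabla\theta\in\dot H^{-s}$ exactly as in Proposition~\ref{prop1}. The hard part is the nonlinear pressure term $\nabla(a\theta)$, which has no counterpart in the isentropic problem: I would estimate $\|\nabla(a\theta)\|_{\dot H^{-s}}\le\|a\theta\|_{\dot H^{1-s}}$ and then use the elementary interpolation $\|a\theta\|_{\dot H^{1-s}}\le\|a\theta\|_{L^2}^{s}\|a\theta\|_{\dot H^1}^{1-s}$ together with the uniform bounds $\|a\theta\|_{L^2}\le\|a\|_{L^\infty}\|\theta\|_{L^2}\le C$ and $\|\nabla(a\theta)\|_{L^2}\le\|\theta\|_{L^\infty}\|\nabla a\|_{L^2}+\|a\|_{L^\infty}\|\nabla\theta\|_{L^2}\le C$ furnished by~\eqref{uniform} and~\eqref{thetabound}. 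Dividing by $\rho$, which is bounded above and below, then gives $\|\G\dot u\|_{L^2}\le C$.

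Finally, with the four negative norms uniformly bounded, I would apply~\eqref{a.2} with $l=0$, which gives $\|\nabla f\|_{L^2}^2\ge C\|f\|_{L^2}^{2(1+1/s)}$ for each $f\in\{a,u,\theta,\dot u\}$ and hence controls the lower-order parts of $X_2(t)$ by the dissipation raised to the power $s/(1+s)$. The first-order parts $\|\nabla u\|_{L^2}^2+\|\nabla a\|_{L^2}^2+\|\nabla\theta\|_{L^2}^2$ already appear in the dissipation of~\eqref{prop2est}, and since $X_2(t)\le C$ by~\eqref{uniform} they are dominated by the same power after a trivial case distinction, exactly as in Proposition~\ref{prop1}. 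Therefore the dissipation in~\eqref{prop2est} is bounded below by $C\,X_2(t)^{1+1/s}$, which turns~\eqref{prop2est} into $\frac{d}{dt}X_2(t)+C\,X_2(t)^{1+1/s}\le0$. Integrating this differential inequality yields $X_2(t)\le C(1+t)^{-s}$, and recalling $X_2(t)\sim\|u\|_{H^1}^2+\|a\|_{H^1}^2+\|\theta\|_{H^1}^2+\|\dot u\|_{L^2}^2$ gives precisely~\eqref{prop30}, completing the proof.
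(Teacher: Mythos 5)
Your proposal is correct and follows essentially the same route as the paper: integrate \eqref{p7} against the uniform bound \eqref{uniform} to propagate the negative Sobolev norms of $(a,u,\theta)$, recover $\dot u\in\dot H^{-s}$ from the momentum equation term by term, and close via the interpolation \eqref{a.2} combined with the dissipation inequality \eqref{prop2est} of Proposition \ref{prop2}. The only local difference is your treatment of $\nabla(a\theta)$ through $\|a\theta\|_{\dot H^{1-s}}\le\|a\theta\|_{L^2}^{s}\|a\theta\|_{\dot H^{1}}^{1-s}$, whereas the paper estimates $\||\xi|^{1-s}\hat a*\hat\theta\|_{L^2}$ by $\||\xi|^{1-s}\hat a\|_{L^2}\|\hat\theta\|_{L^1}$; your version is, if anything, cleaner, since it needs only the $L^\infty$ and $H^1$ control of $a$ and $\theta$ already supplied by \eqref{uniform} and \eqref{thetarho}.
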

\begin{proof}
Integrating \eqref{p7} over $[0,t]$, then using the uniform bound \eqref{uniform}, we have
\begin{equation}
\begin{aligned}
&\sup_{\tau\in[0,t]}(\|\G a(\tau)\|_{L^2}^2+\|\G u(\tau)\|_{L^2}^2+\|\G \theta(\tau)\|_{L^2}^2)\\&
\quad \le (\|\G a_0\|_{L^2}^2+\|\G u_0\|_{L^2}^2+\|\G \theta_0\|_{L^2}^2)\\&
\qquad +C\int_0^t(\|\nabla (a,\theta)(\tau)\|_{H^1}^2+\|\nabla u(\tau)\|_{H^2}^2)(\|\G a(\tau)\|_{L^2}+\|\G u(\tau)\|_{L^2}+\|\G\theta(\tau)\|_{L^2})d\tau\\&
\quad \le C+C\sup_{\tau\in[0,t]}(\|\G a(\tau)\|_{L^2}+\|\G u(\tau)\|_{L^2}+\|\G\theta(\tau)\|_{L^2}),
\end{aligned}
\end{equation}
which implies
\begin{equation*}
\|\G a(t)\|_{L^2}^2+\|\G u(t)\|_{L^2}^2+\|\G \theta(t)\|_{L^2}^2\le C \quad {\rm for}~s\in(0,1/2].
\end{equation*}
Similar as \eqref{prop1est1} in Proposition \ref{prop1}, we have
\begin{equation}\label{prop3est1}
\Delta u\in \dot H^{-s}.
\end{equation}
Recall that $a\in \dot H^{-s}\cap H^2$, $\theta\in\dot H^{-s}\cap H^1$, 
we get $a\in \dot H^{1-s}$ and $\theta\in\dot H^{1-s}$, and hence
\begin{equation*}
\begin{aligned}
\||\xi|^{-s}\widehat{\nabla(a\theta)}\|_{L^2}&\lesssim \||\xi|^{1-s}\widehat {a\theta}\|_{L^2}\lesssim \||\xi|^{1-s}\hat a*\hat\theta\|_{L^2}\lesssim \||\xi|^{1-s}\hat a\|_{L^2}\|\hat\theta\|_{L^1}
\lesssim\||\xi|^{1-s}\hat a\|_{L^2}\|\theta\|_{L^\infty}
\le C,
\end{aligned}
\end{equation*}
where we used the condition \eqref{thetarho} in the last inequality. Hence, it is easy to check that
\begin{equation}\label{prop3est2}
\nabla P=\nabla(a\theta)+\nabla a+\nabla\theta\in \dot H^{-s}.
\end{equation}
Since $\rho\dot u+\nabla P=\mu\Delta u+(\mu+\lambda)\nabla\dive u$, combining \eqref{prop3est1} and \eqref{prop3est2}, we have $\rho\dot u\in \dot H^{-s}$. Using $\underline\rho\le \rho\le M$, we deduce that
$$\dot u\in \dot H^{-s}.$$
So far, we have
\begin{equation*}
\|\G a(t)\|_{L^2}^2+\|\G u(t)\|_{L^2}^2+\|\G \theta(t)\|_{L^2}^2+\|\G\dot u(t)\|_{L^2}^2\le C \quad for \quad s\in(0,1/2].
\end{equation*}
Thanks to \eqref{a.2}, it holds on
\begin{equation*}
\|\nabla^{l+1}f\|_{L^2}\ge C\|\G f\|_{L^2}^{-\frac{1}{l+s}}\|\nabla^lf\|_{L^2}^{1+\frac{1}{l+s}}.
\end{equation*}
By this fact and the uniform bound \eqref{uniform}, we deduce that
\begin{equation*}
\|\nabla u\|_{H^1}^2+\|\nabla a\|_{L^2}^2+\|\nabla\dot u\|_{L^2}^2+\|\nabla\theta\|_{H^1}^2\ge C(\|u\|_{H^1}^2+\|a\|_{H^1}^2+\|\dot u\|_{L^2}^2+\|\theta\|_{H^1}^2)^{1+\frac1s}.
\end{equation*}
Substituting above inequality into \eqref{prop2est}, we get
\begin{equation*}
\frac{d}{dt}X_2(t)+CX_2(t)^{1+\frac1s}\le 0.
\end{equation*}
Solving this ODE directly, and using the uniform bound \eqref{uniform}, we have
\begin{equation*}
X_2(t)\le C(1+t)^{-s}, \quad s\in(0,1/2].
\end{equation*}
Therefore we complete the proof of this lemma.
\end{proof}

Similar to the analysis in section \ref{3}, before we derive the propagation of the negative Sobolev norms of the solution for $s\in(1/2,3/2)$, it is important to improve the decay estimate of the first order spatial derivative of the solution for $s\in(0,1/2]$. 

\begin{prop}\label{prop4} 
Under the assumptions of Theorem \ref{1theo}, and $s\in(0,1/2]$, there exists a large time $T_4$ such that
\begin{equation*}
\|\nabla u(t)\|_{H^1}^2+\|\nabla a(t)\|_{H^1}^2+\|\nabla\theta(t)\|_{H^1}^2\le C(1+t)^{-(1+s)}
\end{equation*}
holds on for all $t\ge T_4$,
where $C$ is a constant independent of time.
\end{prop}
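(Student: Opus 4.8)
The plan is to mirror the proof of Proposition \ref{prop5} from the isentropic setting, now carrying the temperature $\theta$ along. The two ingredients I would combine are the second-order dissipation estimate produced by Lemmas \ref{l4}--\ref{l6} and the uniform boundedness of the negative Sobolev norms established in Proposition \ref{prop3}. The strategy is to turn the dissipation inequality into an autonomous ordinary differential inequality of the form $\frac{d}{dt}\mathcal E_2^2+C(\mathcal E_2^2)^{1+\frac{1}{1+s}}\le 0$ and then integrate it explicitly.

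First I would transfer the energy estimate \eqref{pp11} to the present setting. That inequality is derived under the assumptions of Theorem \ref{0them} and uses only the uniform bound \eqref{uniform}, so it remains valid here; however its prefactor
$$Q(t):=\|\nabla u\|_{L^2}^{\frac14}+\|\nabla(u,a)\|_{L^2}^{\frac12}+\|(a,u,\theta)\|_{H^1}+\|\theta\|_{L^2}^{\frac14}+\|\theta\|_{L^2}^{\frac12}$$
must now be controlled by the $\dot H^{-s}$ decay \eqref{prop30} rather than by the $L^1$ decay \eqref{decay22} used in Lemma \ref{lemma4}. Since \eqref{prop30} yields $Q(t)\lesssim(1+t)^{-s/8}\to 0$, there is a large time $T_4$ beyond which $CQ(t)\le\frac14\min\{\mu,1,\delta\}$, so the dissipation inequality closes into
$$\frac{d}{dt}\mathcal E_2^2(t)+c_0\left(\|\nabla^2u\|_{H^1}^2+\|\nabla^2\theta\|_{H^1}^2+\|\nabla^2a\|_{L^2}^2\right)\le 0,\qquad t\ge T_4,$$
where $\mathcal E_2^2(t)\sim\|\nabla u\|_{H^1}^2+\|\nabla a\|_{H^1}^2+\|\nabla\theta\|_{H^1}^2$ by \eqref{b4}.

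Next I would convert this into an autonomous differential inequality. Applying the interpolation inequality \eqref{a.2} to each of $a,u,\theta$ with $l=1$ and $l=2$, using the uniform negative Sobolev bounds $\|\G(a,u,\theta)\|_{L^2}\le C$ from Proposition \ref{prop3} together with the uniform bound \eqref{uniform} and the extra regularity $\|\nabla^2\theta\|_{L^2}\le C$ of Remark \ref{nabla2theta}, I expect to obtain
$$\|\nabla^2u\|_{H^1}^2+\|\nabla^2\theta\|_{H^1}^2+\|\nabla^2a\|_{L^2}^2\ge C\left(\|\nabla u\|_{H^1}^2+\|\nabla\theta\|_{H^1}^2+\|\nabla a\|_{H^1}^2\right)^{1+\frac{1}{1+s}}\ge C\left(\mathcal E_2^2(t)\right)^{1+\frac{1}{1+s}}.$$
It is precisely the uniform boundedness of all the norms that lets me replace the weaker exponent $1+\frac{1}{2+s}$ coming from the $l=2$ interpolation by the common exponent $1+\frac{1}{1+s}$. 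Substituting this lower bound into the dissipation inequality gives
$$\frac{d}{dt}\mathcal E_2^2(t)+C\left(\mathcal E_2^2(t)\right)^{1+\frac{1}{1+s}}\le 0,\qquad t\ge T_4.$$

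Finally I would integrate this ordinary differential inequality exactly as in \eqref{decay2}, obtaining $\mathcal E_2^2(t)\le C(1+t)^{-(1+s)}$ for $t\ge T_4$, and then return to $\|\nabla(a,u,\theta)\|_{H^1}^2$ via the equivalence \eqref{b4}. The step I expect to be most delicate is the first one: unlike Lemma \ref{lemma4}, where the prefactor was absorbed using the fast $(1+t)^{-3/4}$ rate of the $L^1$ theory, here I only have the slow rate $(1+t)^{-s}$ from Proposition \ref{prop3}, so I must verify that every term entering $Q(t)$ is genuinely lower order and decays under \eqref{prop30}, and that the temperature contributions — in particular those involving $\|\theta\|_{L^\infty}$, handled through $\|\nabla^2\theta\|_{L^2}\le C$ — do not spoil the smallness of the prefactor. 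Once $T_4$ is secured, the interpolation and ODE arguments are routine repetitions of Section \ref{3}.
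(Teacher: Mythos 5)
Your proposal is correct and follows essentially the same route as the paper: it closes the dissipation inequality \eqref{pp11} after a large time $T_4$ by absorbing the prefactor via the decay \eqref{prop30} of Proposition \ref{prop3}, then uses the interpolation \eqref{a.2} together with the uniform $\dot H^{-s}$ bounds, \eqref{uniform} and \eqref{thetabound} to obtain \eqref{ooo1}, and finally integrates the resulting differential inequality. No substantive differences from the paper's argument.
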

\begin{proof}
Thanks to \eqref{a.2}, we have
\begin{equation*}
\|\nabla^{l+1}f\|_{L^2}\ge C\|\G f\|_{L^2}^{-\frac{1}{l+s}}\|\nabla^lf\|_{L^2}^{1+\frac{1}{l+s}},
\end{equation*}
this together with the uniform bound \eqref{uniform}, we obtain
\begin{equation}\label{ooo1}
\|\nabla^2u\|_{H^1}^2+\|\nabla^2\theta\|_{H^1}^2+\|\nabla^2a\|_{L^2}^2\ge C(\|\nabla u\|^2_{H^1}+\|\nabla\theta\|_{H^1}^2+\|\nabla a\|_{H^1}^2)^{1+\frac{1}{1+s}}.
\end{equation}
On the other hand, similar as the proof of Lemma \ref{diss}, the combination of \eqref{pp11} in Lemma \ref{l5} and the decay rate \eqref{prop30} in Proposition \ref{prop3} easily derives that there exits a large time $T_4$ such that
\begin{equation*}
\frac{d}{dt}\left(\|\nabla a\|_{H^1}^2+\|\nabla u\|_{H^1}^2+\|\nabla \theta\|_{H^1}^2+2\delta\int\nabla u\cdot\nabla^2adx\right)
+\mu\|\nabla^2u\|_{H^1}^2+\|\nabla^2\theta\|_{H^1}^2+\delta\|\nabla^2a\|_{L^2}^2\le 0
\end{equation*}
holds on for all $t\ge T_4$.
Thus, by virtue of the equivalent relation \eqref{b4} and \eqref{ooo1}, there holds for all $t\ge T_4$
\begin{equation}\label{ee2} 
\frac{d}{dt}\mathcal E_2^2(t)+C(\mathcal E_2^2(t))^{1+\frac{1}{1+s}}\le 0.
\end{equation}
Solving the inequality \eqref{ee2} directly, and according to \eqref{uniform} and $\|\nabla^2\theta\|_{L^\infty_tL^2}< \infty$, we obtain
\begin{equation*}
\mathcal E_2^2(t)\le C(1+t)^{-(1+s)}, \quad t\ge T_4,
\end{equation*}
which implies
\begin{equation*}
\|\nabla a(t)\|_{H^1}^2+\|\nabla u(t)\|_{H^1}^2+\|\nabla\theta(t)\|_{H^1}^2\le C(1+t)^{-(1+s)}, \quad t\ge T_4.
\end{equation*}
Therefore we complete the proof of this lemma.
\end{proof}
\begin{prop}\label{prop7}  
Under the assumptions of Theorem \ref{1theo}, when $s\in(1/2,3/2)$, then for all $t\ge 0$, we have
\begin{equation*}
\|\G a(t)\|_{L^2}^2+\|\G u(t)\|_{L^2}^2+\|\G \theta(t)\|_{L^2}^2+\|\G\dot u(t)\|_{L^2}^2\le C,\end{equation*}
and
\begin{equation*}
\|a(t)\|_{H^1}^2+\|u(t)\|_{H^1}^2+\|\theta(t)\|_{H^1}^2+\|\dot u(t)\|_{L^2}^2\le C(1+t)^{-s}.
\end{equation*}
For $t\ge T_4$, there holds
\begin{equation*}
\|\nabla a(t)\|_{H^1}^2+\|\nabla u(t)\|_{H^1}^2+\|\nabla\theta(t)\|_{H^1}^2\le C(1+t)^{-(1+s)}, \end{equation*}
where $T_4$ is given in Proposition \ref{prop4}, and $C$ is a constant independent of time.
\end{prop}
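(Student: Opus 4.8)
The plan is to mirror the argument of Proposition \ref{prop6}, bootstrapping from the already-established $s=1/2$ case. First I would observe that for $s\in(1/2,3/2)$ the embedding $\dot H^{-s}\cap L^2\hookrightarrow\dot H^{-1/2}$ guarantees $(a_0,u_0,\theta_0)\in\dot H^{-1/2}$, so Propositions \ref{prop3} and \ref{prop4} apply with $s=1/2$ and yield
\begin{equation*}
\|(a,u,\theta)(t)\|_{L^2}\le C(1+t)^{-\frac14}\ (t\ge0),\qquad \|\nabla(a,u,\theta)(t)\|_{H^1}\le C(1+t)^{-\frac34}\ (t\ge T_4).
\end{equation*}
These two rates are precisely the input needed to control the right-hand side of the negative Sobolev estimate \eqref{p8}.

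Next I would integrate \eqref{p8} in time, splitting $\int_0^t=\int_0^{T_4}+\int_{T_4}^t$. On the finite interval $[0,T_4]$ the integrands are bounded via the uniform estimate \eqref{uniform} together with the solution decay, so that part contributes only a finite constant. On $[T_4,t]$ I would insert the two decay rates above; a direct count of exponents shows the first nonlinear term decays like $(1+\tau)^{\frac{s}{2}-\frac74}$ and the temperature term $\|\nabla u\|_{L^2}^{s+1/2}\|\nabla^2u\|_{L^2}^{3/2-s}$ like $(1+\tau)^{-\frac32}$, each multiplied by a negative Sobolev norm. After a Young inequality these factors are absorbed, and since $\frac{s}{2}-\frac74<-1$ exactly when $s<3/2$, the time integrals converge, giving $\|\G a\|_{L^2}^2+\|\G u\|_{L^2}^2+\|\G\theta\|_{L^2}^2\le C$ for all $t\ge0$. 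To upgrade this to $\G\dot u\in L^2$ I would repeat the argument of Proposition \ref{prop3}: from $u\in\dot H^{-s}\cap H^2$ one gets $\Delta u\in\dot H^{-s}$, while $a,\theta\in\dot H^{1-s}$ together with the $L^\infty$ bound \eqref{thetarho} on $\theta$ give $\nabla P=\nabla(a\theta)+\nabla a+\nabla\theta\in\dot H^{-s}$; the momentum relation $\rho\dot u+\nabla P=\mu\Delta u+(\mu+\lambda)\nabla\dive u$ and $\underline\rho\le\rho\le M$ then force $\dot u\in\dot H^{-s}$.

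With the negative norms propagated, I would close the decay estimates exactly as in the $s\in(0,1/2]$ analysis. The interpolation inequality \eqref{a.2} bounds the dissipation appearing in \eqref{prop2est} from below by $CX_2(t)^{1+1/s}$, and Proposition \ref{prop2} then produces the ODE $\frac{d}{dt}X_2+CX_2^{1+1/s}\le0$, whose integration gives $\|(a,u,\theta)(t)\|_{H^1}^2+\|\dot u(t)\|_{L^2}^2\le C(1+t)^{-s}$. Finally, feeding this improved rate into the energy inequality \eqref{pp11} and arguing as in Proposition \ref{prop4}, via \eqref{a.2} at the level of first derivatives and the equivalence \eqref{b4}, yields $\frac{d}{dt}\mathcal E_2^2+C(\mathcal E_2^2)^{1+1/(1+s)}\le0$ and hence $\|\nabla(a,u,\theta)(t)\|_{H^1}^2\le C(1+t)^{-(1+s)}$ for $t\ge T_4$. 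The main obstacle is the middle step: the convergence of the time integrals in \eqref{p8} hinges entirely on the improved first-order decay $(1+t)^{-3/4}$ from Proposition \ref{prop4}, since the cruder rate coming from the solution itself would leave the integrand non-integrable; moreover, the coupling term $\nabla P=\nabla(a\theta)+\nabla a+\nabla\theta$ in the verification of $\dot u\in\dot H^{-s}$ relies on the temperature $L^\infty$ bound \eqref{thetarho}, which has no counterpart for a general $H^1$ datum.
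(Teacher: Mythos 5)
Your proposal is correct and follows essentially the same route as the paper: bootstrap the $s=1/2$ decay from Propositions \ref{prop3} and \ref{prop4} via the embedding $\dot H^{-s}\cap L^2\subset\dot H^{-1/2}$, insert those rates into \eqref{p8} with the time integral split at $T_4$ (your exponent counts $(1+\tau)^{\frac{s}{2}-\frac74}$ and $(1+\tau)^{-\frac32}$ agree with the paper's), propagate $\G\dot u$ through the momentum equation exactly as in Proposition \ref{prop3}, and then rerun the ODE arguments of Propositions \ref{prop3} and \ref{prop4} for general $s\in(1/2,3/2)$. No gaps.
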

\begin{proof}
Notice that $(a_0,u_0,\theta_0)\in \dot H^{-s}\cap L^2\subset \dot H^{-1/2}$ when $s\in(1/2,3/2)$, then we derive from what we have provn in Propositions \ref{prop3} and \ref{prop4} with $s=1/2$ that
\begin{equation}\label{p88}
\begin{aligned}
&\|a(t)\|_{L^2}+\|u(t)\|_{L^2}+\|\theta(t)\|_{L^2}\le C(1+t)^{-\frac14}, \quad t\ge0,\\&
\|\nabla a(t)\|_{H^1}+\|\nabla u(t)\|_{H^1}+\|\nabla \theta(t)\|_{H^1}\le C(1+t)^{-\frac34},\quad t\ge T_4.
\end{aligned}
\end{equation}
Substituting \eqref{p88} into \eqref{p8}, it is easy to obtain that for $s\in(1/2,3/2)$,
\begin{equation*}
\begin{aligned}
&\sup_{\tau\in[0,t]}\left(\|\G a(\tau)\|_{L^2}^2+\|\G u(\tau)\|_{L^2}^2+\|\G \theta(\tau)\|_{L^2}^2\right)+C\int_0^t(\|\nabla\G u\|_{L^2}^2+\|\nabla\G\theta\|_{L^2}^2)d\tau\\&
\quad \lesssim \|\G a_0\|_{L^2}^2+ \|\G u_0\|_{L^2}^2+ \|\G \theta_0\|_{L^2}^2+\int_0^{T_4}(1+\tau)^{-\frac14(s+\frac12)}\|\G\theta\|_{L^2}d\tau\\&
\qquad+\int_0^{T_4}(1+\tau)^{-\frac14}(\|\G a\|_{L^2}+\|\G u\|_{L^2}+\|\G\theta\|_{L^2})d\tau
      +\int_{T_4}^t(1+\tau)^{-\frac32}\|\G\theta\|_{L^2}d\tau\\&
\qquad +\int_{T_4}^t(1+\tau)^{-\frac74+\frac12s}(\|\G a\|_{L^2}+\|\G u\|_{L^2}+\|\G\theta\|_{L^2})d\tau.
\end{aligned}
\end{equation*}
This implies
\begin{equation*}
\|\G a(t)\|_{L^2}+\|\G u(t)\|_{L^2}+\|\G \theta(t)\|_{L^2}\le C \quad {\rm for} ~s\in(1/2,3/2).
\end{equation*}
According to Proposition \ref{prop2}, repeat the progress in Proposition \ref{prop3} for $s\in(1/2,3/2)$, 
there hold  for all $t\ge 0$
\begin{equation*}
\|\G a(t)\|_{L^2}^2+\|\G u(t)\|_{L^2}^2+\|\G \theta(t)\|_{L^2}^2+\|\G\dot u(t)\|_{L^2}^2\le C, \quad s\in(1/2,3/2),
\end{equation*}
and
\begin{equation*}
\|a(t)\|_{H^1}^2+\|u(t)\|_{H^1}^2+\|\theta(t)\|_{H^1}^2+\|\dot u(t)\|_{L^2}^2\le C(1+t)^{-s}, \quad s\in(1/2,3/2).
\end{equation*}
Similarly, repeat the progress in Proposition \ref{prop4} for $s\in(1/2,3/2)$, 
it holds for all $t\ge T_4$
\begin{equation*}
\|\nabla a(t)\|_{H^1}^2+\|\nabla u(t)\|_{H^1}^2+\|\nabla\theta(t)\|_{H^1}^2\le C(1+t)^{-(1+s)}, \quad s\in(1/2,3/2).
\end{equation*}
The combination of above two estimates completes the proof of this lemma.
\end{proof}

\textbf{Proof of Theorem \ref{1theo}:} 
With the uniform bounds and decay estimates stated in Propositions \ref{prop3}, \ref{prop4} and \ref{prop7},
we can complete the the proof of Theorem \ref{1theo}.

\appendix
\section{Appendix. Analytic tools}\label{appendix}

\quad
In this section, we need to introduce some useful lemmas which will be frequently used throughout the paper, the first one is Gagliardo-Nirenberg inequality, the proof can be found in \cite{nirenberg}(pp.125).

\begin{lemm}
Let $0\le m,\alpha\le l$, then we have
\begin{equation}\label{a.1}
\|\nabla^\alpha f\|_{L^p}\lesssim \|\nabla^mf\|_{L^2}^{1-\theta}\|\nabla^lf\|_{L^2}^\theta
\end{equation}
where $0\le\theta\le1$ and $\alpha$ satisfy
\begin{equation*}
\frac1p-\frac\alpha3=(\frac12-\frac m3)(1-\theta)+(\frac12-\frac l3)\theta.
\end{equation*}
\end{lemm}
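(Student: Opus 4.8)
The plan is to reduce the general statement to a one–parameter $L^2$ interpolation on the Fourier side, after first using a frequency decomposition to pass from $L^p$ back to $L^2$. A scaling heuristic already pins down the exponents: replacing $f$ by $f(\lambda\,\cdot)$ multiplies the left side of \eqref{a.1} by $\lambda^{\alpha-3/p}$ and the right side by $\lambda^{(1-\theta)(m-3/2)+\theta(l-3/2)}$, so the stated relation between $p,\alpha,m,l,\theta$ is exactly the requirement that both sides scale identically, and is therefore forced. It also suggests introducing the \emph{effective smoothness} $s:=\alpha+3\bigl(\tfrac12-\tfrac1p\bigr)$, for which $\|\nabla^\alpha f\|_{L^p}$ should be comparable to $\|\nabla^{s}f\|_{L^2}$.

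First I would fix a homogeneous Littlewood--Paley decomposition $f=\sum_{j\in\mathbb Z}\dot\Delta_j f$, where $\dot\Delta_j f$ has Fourier support in the annulus $|\xi|\sim 2^{j}$. By Bernstein's inequality (valid for $p\ge2$), each piece satisfies
\[
\|\nabla^\alpha\dot\Delta_j f\|_{L^p}\lesssim 2^{j\alpha}\,2^{3j(\frac12-\frac1p)}\|\dot\Delta_j f\|_{L^2}=2^{js}\|\dot\Delta_j f\|_{L^2},
\]
so the triangle inequality gives $\|\nabla^\alpha f\|_{L^p}\lesssim\sum_{j}2^{js}\|\dot\Delta_j f\|_{L^2}$. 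Next I would estimate the dyadic blocks in two ways by Plancherel on the annulus, namely $\|\dot\Delta_j f\|_{L^2}\lesssim 2^{-jm}\|\nabla^m f\|_{L^2}$ and $\|\dot\Delta_j f\|_{L^2}\lesssim 2^{-jl}\|\nabla^l f\|_{L^2}$, and split the sum at a frequency $2^{N}$, controlling the low frequencies $j\le N$ by $\|\nabla^m f\|_{L^2}$ and the high frequencies $j>N$ by $\|\nabla^l f\|_{L^2}$. Because $m\le\alpha\le l$ yields $m\le s\le l$ (with strict inequalities in the non-endpoint range), both geometric series converge and give
\[
\|\nabla^\alpha f\|_{L^p}\lesssim 2^{N(s-m)}\|\nabla^m f\|_{L^2}+2^{N(s-l)}\|\nabla^l f\|_{L^2}.
\]

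The final step is to optimize in $N$: choosing $2^{N(l-m)}=\|\nabla^l f\|_{L^2}/\|\nabla^m f\|_{L^2}$ balances the two terms and produces $\|\nabla^\alpha f\|_{L^p}\lesssim\|\nabla^m f\|_{L^2}^{1-\theta}\|\nabla^l f\|_{L^2}^{\theta}$ with $\theta=(s-m)/(l-m)$. It then only remains to check that this $\theta$ is exactly the one in the statement: $\theta=(s-m)/(l-m)$ is equivalent to $s=(1-\theta)m+\theta l$, and substituting $s=\alpha+\tfrac32-\tfrac3p$ and rearranging reproduces precisely $\tfrac1p-\tfrac\alpha3=(\tfrac12-\tfrac m3)(1-\theta)+(\tfrac12-\tfrac l3)\theta$. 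The endpoint cases $\theta\in\{0,1\}$ are immediate.

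I expect the genuine obstacle to be the passage from $L^2$ to $L^p$, i.e.\ the Bernstein step, which as stated needs $p\ge2$; for $1\le p<2$ one must argue differently (for instance via a dual Sobolev embedding or Nirenberg's original one–dimensional reduction). A cleaner but equivalent route for $p\ge2$ avoids Littlewood--Paley entirely: first establish the purely $L^2$ interpolation $\|\nabla^{s}f\|_{L^2}\lesssim\|\nabla^m f\|_{L^2}^{1-\theta}\|\nabla^l f\|_{L^2}^{\theta}$ directly by Plancherel and H\"older, writing $|\xi|^{2s}|\hat f|^2=(|\xi|^{2m}|\hat f|^2)^{1-\theta}(|\xi|^{2l}|\hat f|^2)^{\theta}$ with exponents $1/(1-\theta)$ and $1/\theta$, and then combine it with the homogeneous Sobolev embedding $\dot H^{s}\hookrightarrow\dot W^{\alpha,p}$; the only care needed there is the usual low–frequency subtlety of homogeneous spaces, which is harmless for the smooth, integrable solutions to which \eqref{a.1} is applied in this paper.
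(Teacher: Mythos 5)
The paper does not prove this lemma at all: it simply cites Nirenberg's original article (p.~125) and moves on, so there is no in-paper argument to match yours against. Your Littlewood--Paley proof is correct as the main route for $p\ge 2$: the scaling computation correctly forces the exponent relation, Bernstein plus the two Plancherel bounds $\|\dot\Delta_j f\|_{L^2}\lesssim 2^{-jm}\|\nabla^m f\|_{L^2}$ and $\lesssim 2^{-jl}\|\nabla^l f\|_{L^2}$ give the two geometric tails, and optimizing the splitting frequency yields exactly $\theta=(s-m)/(l-m)$, i.e.\ $s=(1-\theta)m+\theta l$, which is the stated condition. (One small logical point: it is $\theta\in[0,1]$, not $m\le\alpha\le l$, that guarantees $m\le s\le l$.) Since every application of \eqref{a.1} in the paper has $p\in\{2,3,6,3/s,\infty\}$ with $3/s>2$, the restriction to $p\ge2$ is harmless here. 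Two caveats are worth recording. First, your ``cleaner'' alternative via $\|\Lambda^s f\|_{L^2}\lesssim\|\nabla^m f\|_{L^2}^{1-\theta}\|\nabla^l f\|_{L^2}^{\theta}$ followed by $\dot H^{s}\hookrightarrow \dot W^{\alpha,p}$ genuinely fails at $p=\infty$ (the paper repeatedly uses $\|a\|_{L^\infty}\lesssim\|\nabla a\|_{L^2}^{1/2}\|\nabla^2a\|_{L^2}^{1/2}$, which corresponds to $s=3/2$, and $\dot H^{3/2}(\mathbb R^3)\not\hookrightarrow L^\infty$); your primary Littlewood--Paley argument survives there precisely because the interpolation is performed on the $\ell^1$ dyadic sum, i.e.\ one is really using $\dot B^{3/2}_{2,1}\hookrightarrow L^\infty$, and the two geometric series still converge since $m<s<l$ strictly in that application. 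Second, Nirenberg's cited proof is a real-variable argument (one-dimensional reduction and integration by parts) that also covers $1\le p<2$ and non-$L^2$ base norms; your Fourier-analytic version trades that generality for brevity, which is a reasonable trade for this paper's purposes.
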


Next we introduce a special Sobolev interpolation inequality, the proof can be found in \cite{guo-wang}(see Lemma A.4). 
\begin{lemm}\label{l1}
Let $s\ge 0$ and $l\ge 0$, then we have
\begin{equation}\label{a.2}
\|\nabla^l f\|_{L^2}\lesssim \|\nabla ^{l+1}f\|^{1-\alpha}_{L^2}\|\Lambda ^{-s}f\|^{\alpha}_{L^2},
\end{equation}
where $\alpha=\frac{1}{l+1+s}$.
\end{lemm}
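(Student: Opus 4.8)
The plan is to pass to the Fourier side and run a frequency-space splitting, choosing the splitting radius optimally at the end. By Plancherel's theorem and the definition of $\Lambda^{-s}$, all three quantities in \eqref{a.2} are weighted $L^2$ norms of $\hat f$: one has $\|\nabla^l f\|_{L^2}\sim\||\xi|^l\hat f\|_{L^2}$, $\|\nabla^{l+1}f\|_{L^2}\sim\||\xi|^{l+1}\hat f\|_{L^2}$ and $\|\Lambda^{-s}f\|_{L^2}=\||\xi|^{-s}\hat f\|_{L^2}$, where the equivalences for the full gradient tensor cost only dimensional constants that are harmless for a $\lesssim$ estimate. Hence it suffices to prove $\||\xi|^l\hat f\|_{L^2}\lesssim\||\xi|^{l+1}\hat f\|_{L^2}^{1-\alpha}\||\xi|^{-s}\hat f\|_{L^2}^{\alpha}$ with $\alpha=1/(l+1+s)$.

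For a radius $R>0$ to be fixed later I would split $\R^3=\{|\xi|\le R\}\cup\{|\xi|>R\}$ and bound $\int|\xi|^{2l}|\hat f|^2\,d\xi$ on each piece. On low frequencies I write $|\xi|^{2l}=|\xi|^{2(l+s)}|\xi|^{-2s}\le R^{2(l+s)}|\xi|^{-2s}$, producing the contribution $R^{2(l+s)}\||\xi|^{-s}\hat f\|_{L^2}^2$; on high frequencies I write $|\xi|^{2l}=|\xi|^{-2}|\xi|^{2(l+1)}\le R^{-2}|\xi|^{2(l+1)}$, producing $R^{-2}\||\xi|^{l+1}\hat f\|_{L^2}^2$. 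Summing the two contributions gives, for every $R>0$,
\begin{equation*}
\||\xi|^l\hat f\|_{L^2}^2\le R^{2(l+s)}\||\xi|^{-s}\hat f\|_{L^2}^2+R^{-2}\||\xi|^{l+1}\hat f\|_{L^2}^2.
\end{equation*}

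To close the argument I would optimize in $R$. Writing $A=\||\xi|^{-s}\hat f\|_{L^2}$ and $B=\||\xi|^{l+1}\hat f\|_{L^2}$, the right-hand side $R^{2(l+s)}A^2+R^{-2}B^2$ is minimized by balancing the two terms, i.e. by taking $R$ with $R^{2(l+s+1)}\sim B^2/A^2$. For this choice both terms become a constant multiple of $A^{2/(l+s+1)}B^{2(l+s)/(l+s+1)}=A^{2\alpha}B^{2(1-\alpha)}$, so taking square roots yields $\||\xi|^l\hat f\|_{L^2}\lesssim A^{\alpha}B^{1-\alpha}$, which is precisely \eqref{a.2}.

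I do not anticipate a serious obstacle: the argument is essentially the elementary optimization above. The only points needing a word of care are the passage from $\|\nabla^l f\|_{L^2}$ to the scalar symbol $|\xi|^l$, harmless up to constants, and the degenerate cases in which $\Lambda^{-s}f\notin L^2$ or $\nabla^{l+1}f\notin L^2$, where either side may be infinite and the bound holds vacuously; away from these the balancing choice of $R$ is always admissible and the constant depends only on $l$ and $s$.
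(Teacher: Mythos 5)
Your proof is correct. Note that the paper does not actually prove Lemma \ref{l1}; it simply cites Lemma A.4 of Guo--Wang \cite{guo-wang}, where the inequality is obtained in one line by Plancherel together with H\"older's inequality applied to the factorization $|\xi|^{2l}=\bigl(|\xi|^{2(l+1)}\bigr)^{1-\alpha}\bigl(|\xi|^{-2s}\bigr)^{\alpha}$ with conjugate exponents $\frac{1}{1-\alpha}$ and $\frac{1}{\alpha}$; the identity $l=(l+1)(1-\alpha)-s\alpha$ is exactly the condition $\alpha=\frac{1}{l+1+s}$. Your low/high frequency splitting followed by optimization over the radius $R$ is the standard two-term substitute for that single H\"older step: it produces the same exponent with a harmless extra factor $\sqrt2$, and the balancing condition $R^{2(l+s+1)}\sim B^2/A^2$ is just the Euler--Lagrange form of the H\"older equality case. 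The two routes are essentially interchangeable --- H\"older is marginally shorter, while your version makes the role of the two norms and the origin of the exponent $\alpha$ more transparent. Your handling of the reduction $\|\nabla^l f\|_{L^2}\sim\||\xi|^l\hat f\|_{L^2}$ and of the degenerate cases where one of the norms vanishes or is infinite is also adequate.
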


If $s\in(0,3)$, $\G f$ is the Riesz potential. According to the Hardy-Littlewood-Sobolev theorem, there is the following inequality, the proof of this inequality see \cite{stein}.
\begin{lemm}\label{l2}
Let $0<s<3$, $1<p<q<\infty$, $\frac1q+\frac s3=\frac1p$, then
\begin{equation}\label{a.3}
\|\Lambda^{-s}f\|_{L^q}\lesssim \|f\|_{L^p}.
\end{equation}
\end{lemm}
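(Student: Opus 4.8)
The plan is to realize $\Lambda^{-s}$ as a Riesz potential and then prove the stated bound by the classical argument of Hedberg together with the Hardy--Littlewood maximal inequality. Since the Fourier multiplier of $\Lambda^{-s}$ is $|\xi|^{-s}$, and since on $\R^3$ the tempered distribution $|\xi|^{-s}$ is, up to a positive constant, the Fourier transform of $|x|^{-(3-s)}$ for $0<s<3$, one has the convolution representation
\[
\Lambda^{-s}f(x)=c_s\int_{\R^3}\frac{f(y)}{|x-y|^{3-s}}\,dy .
\]
First I would record this identity, reduce the lemma to estimating this Riesz potential, and note that it suffices to treat $f\ge 0$.

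Second, for a parameter $R>0$ to be chosen pointwise, I would split the kernel into its near and far parts,
\[
\Lambda^{-s}f(x)=c_s\Big(\int_{|x-y|\le R}+\int_{|x-y|>R}\Big)\frac{f(y)}{|x-y|^{3-s}}\,dy=:A(x)+B(x).
\]
For $A$ I would decompose the ball into dyadic annuli $\{2^{-k-1}R<|x-y|\le 2^{-k}R\}$, bound the kernel by $(2^{-k}R)^{-(3-s)}$ on each annulus, and bound the mass of $f$ there by $|B(x,2^{-k}R)|\,Mf(x)$, where $Mf$ is the Hardy--Littlewood maximal function; summing the resulting geometric series, which converges precisely because $s>0$, gives $A(x)\lesssim R^{s}Mf(x)$. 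For $B$ I would apply H\"older's inequality with exponents $p,p'$, so that $B(x)\lesssim \|f\|_{L^p}\big(\int_{|z|>R}|z|^{-(3-s)p'}\,dz\big)^{1/p'}$; the tail integral converges exactly when $(3-s)p'>3$, which is equivalent to $p<3/s$, i.e.\ to $q<\infty$, and it equals a constant times $R^{\,3-(3-s)p'}$, yielding $B(x)\lesssim R^{\,s-3/p}\|f\|_{L^p}$.

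Third, I would optimize in $R$: the choice $R=\big(\|f\|_{L^p}/Mf(x)\big)^{p/3}$ balances the two contributions and produces the pointwise Hedberg inequality
\[
\Lambda^{-s}f(x)\lesssim \big(Mf(x)\big)^{1-sp/3}\,\|f\|_{L^p}^{\,sp/3}.
\]
Raising this to the power $q$, integrating over $\R^3$, and using the algebraic identity $(1-sp/3)q=p$ (forced by the scaling relation $\tfrac1q+\tfrac s3=\tfrac1p$) leaves $\int (Mf)^{p}\,dx$ on the right. Finally I would invoke the maximal theorem $\|Mf\|_{L^p}\lesssim\|f\|_{L^p}$ for $p>1$ and verify that the remaining powers of $\|f\|_{L^p}$ sum to $q$, which gives $\|\Lambda^{-s}f\|_{L^q}\lesssim\|f\|_{L^p}$.

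The main obstacle, and the exact place where the hypotheses are consumed, is the two-sided restriction on $p$: the far-part estimate needs $p<3/s$ (equivalently $q<\infty$) for the tail integral to converge, while the concluding step needs $p>1$ so that the maximal operator is bounded on $L^p$. The endpoint $p=1$ genuinely fails, where only a weak-type estimate survives, which is why the statement assumes $1<p<q<\infty$; everything else amounts to exponent bookkeeping pinned down by the single relation $\tfrac1q+\tfrac s3=\tfrac1p$.
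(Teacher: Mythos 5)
Your argument is correct: the convolution representation of $\Lambda^{-s}$ as the Riesz potential $I_s$ is valid for $0<s<3$, the near/far splitting with the dyadic-annulus bound $A(x)\lesssim R^{s}Mf(x)$ and the H\"older tail bound $B(x)\lesssim R^{s-3/p}\|f\|_{L^p}$ are both right, the exponent bookkeeping checks out (in particular $(1-sp/3)q=p$ follows from $\tfrac1q+\tfrac s3=\tfrac1p$, and $sp/3<1$ since $p<3/s$), and you correctly locate where $p>1$ and $q<\infty$ are consumed. Note, however, that the paper does not prove this lemma at all: it is quoted as a known result (Hardy--Littlewood--Sobolev) with a citation to Stein's book, so your proof should be compared with that classical reference rather than with anything in the paper. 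Stein's argument is genuinely different from yours: he truncates the kernel at a level $\mu$ chosen in terms of the distribution function, obtains a weak-type $(p,q)$ estimate from Chebyshev plus Young's inequality, and then upgrades to the strong-type bound by Marcinkiewicz interpolation between nearby exponent pairs --- a route that also yields the weak-type endpoint at $p=1$, which your method cannot reach since the maximal operator is only weak $(1,1)$. Your Hedberg-style pointwise inequality $\Lambda^{-s}f(x)\lesssim (Mf(x))^{1-sp/3}\|f\|_{L^p}^{sp/3}$ buys a shorter, self-contained proof (modulo the maximal theorem) and a reusable pointwise bound, at the cost of the $p=1$ endpoint; for the lemma as stated, with $1<p<q<\infty$, it is a complete and standard alternative. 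One cosmetic point worth a sentence in a final write-up: the optimization in $R$ should be restricted to the a.e.\ points where $0<Mf(x)<\infty$, with the degenerate cases $f\equiv 0$ and $Mf(x)=\infty$ dismissed trivially.
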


The following Hausdorff-Young inequality is useful in this paper. The proof can be found in \cite{loukas}(see Proposition 2.2.16).
\begin{lemm}
When $f\in L^p(\mathbb R^3)$, $1\le p\le 2$, then $\hat f\in L^{p'}(\mathbb R^n)$, and there holds
\begin{equation}\label{hausdorff}
\|\hat f\|_{L^{p'}}\le C\|f\|_{L^p},
\end{equation}
where $1/p+1/p'=1$.
\end{lemm}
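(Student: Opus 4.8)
The plan is to deduce the inequality from two endpoint estimates by the Riesz--Thorin interpolation theorem. First I would record the case $p=1$: directly from the definition of the Fourier transform,
\begin{equation*}
\|\hat f\|_{L^\infty}=\sup_{\xi}\left|\int_{\mathbb R^3}f(x)e^{-2\pi i x\cdot\xi}\,dx\right|\le\int_{\mathbb R^3}|f(x)|\,dx=\|f\|_{L^1},
\end{equation*}
so that the Fourier transform $\mathcal F$ maps $L^1\to L^\infty$ boundedly with operator norm at most $1$. Next I would record the case $p=2$, which is Plancherel's theorem, $\|\hat f\|_{L^2}=\|f\|_{L^2}$, so that $\mathcal F:L^2\to L^2$ is bounded with norm $1$. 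These two facts are the endpoints of the interpolation scale.

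With the endpoints in hand I would apply the Riesz--Thorin theorem to the single linear operator $\mathcal F$. For $\theta\in[0,1]$ define the exponents by
\begin{equation*}
\frac1p=\frac{1-\theta}{1}+\frac{\theta}{2},\qquad \frac1q=\frac{1-\theta}{\infty}+\frac{\theta}{2}.
\end{equation*}
Riesz--Thorin then yields $\|\mathcal F f\|_{L^q}\le\|f\|_{L^p}$ on the interpolation domain. The essential bookkeeping step is to verify that $q=p'$: the first relation gives $\frac1p=1-\frac\theta2$, hence $\frac\theta2=1-\frac1p=\frac1{p'}$, and substituting into the second relation gives $\frac1q=\frac\theta2=\frac1{p'}$, i.e. $q=p'$. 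As $\theta$ ranges over $[0,1]$ the exponent $p$ ranges over $[1,2]$, which is exactly the stated range, and the interpolated operator norm is bounded by $1$, so the constant $C$ may be taken equal to $1$.

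The only genuine subtlety — and the step I would treat with the most care — is that for $1<p<2$ the integral defining $\hat f$ need not converge absolutely, since such $f$ need not lie in $L^1$. To handle this I would first establish the inequality on the dense subspace $L^1\cap L^2$ (equivalently on Schwartz functions), where $\mathcal F$ is unambiguously given by the integral formula and both endpoint bounds apply verbatim; the interpolation then produces a uniform $L^p\to L^{p'}$ bound on this subspace. I would then extend $\mathcal F$ to all of $L^p$ by density, using precisely this uniform bound to guarantee that the extension is well defined, independent of the approximating sequence, and satisfies the same estimate. This density/extension argument, rather than any hard analytic estimate, is where the delicacy lies; once it is in place the theorem follows immediately.
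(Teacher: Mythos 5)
Your proof is correct, and it is essentially the same argument as the one the paper points to: the paper gives no proof of its own but cites Proposition 2.2.16 of Grafakos, whose proof is precisely this Riesz--Thorin interpolation between the trivial $L^1\to L^\infty$ bound and Plancherel's theorem, with the same density extension from $L^1\cap L^2$. Your exponent bookkeeping and the density argument for $1<p<2$ are handled correctly, so nothing is missing.
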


\section*{Acknowledgements}

This research was partially supported by NSFC (Grant Nos.11801586, 11971496 and 11431015) and the
Fundamental Research Funds for the Central Universities of China (Grant No.18lgpy66).

\phantomsection
\addcontentsline{toc}{section}{\refname}


\begin{thebibliography}{99}

\bibitem{danchin}
R.Danchin,
Global existence in critical spaces for flows of compressible viscous and heat-conductive gases,
Arch. Ration. Mech. Anal, 160(2001) 1-39.

\bibitem{duan1}
R.J.Duan, H.X.Liu, S.Ukai, T.Yang,
Optimal $L^p-L^q$ convergence rate for the compressible Navier-Stokes equations with potential force,
J. Differential Equations, 238(2007) 220-223.

\bibitem{duan22}
R.J.Duan, S.Ukai, T.Yang, H.J.Zhao,
Optimal convergence rate for  compressible Navier-Stokes equations with potential force,
Math. Models Methods Appl. Sci, 17(2007) 737-758.


 \bibitem{gao-wei-yao}
 J.C.Gao, Z.Z.Wei, Z.A.Yao,
The optimal decay rate of strong solution for the compressible Navier-Stokes equations with large initial data,
Physica D: Nonlinear Phenomena, 406(2020) 132506.

\bibitem{guo-wang}
Y.Guo, Y.J.Wang,
Decay of dissipative equations and negative Sobolev spaces,
Comm. Partial Differential Equations,
37(12) (2012) 2165-2208.


\bibitem{he-huang-wang}
L.B.He, J.C.Huang, C.Wang,
Global stability of large solutions to the 3D compressible Navier-Stokes equations,
Arch. Ration. Mech. Anal, 234(3) (2019) 1167-1222.

 \bibitem{he-huang-wang2}
 L.B.He, J.C.Huang, C.Wang,
 Stability of large solutions for full compressible Navier-Stokes equations in the whole spaces,
 arXiv:2001.00834, 2020.

 \bibitem{huang-li-xin}
X.D.Huang, J.Li, Z.P.Xin,
Global well-posedness of classical solutions with large oscillations and vacuum to the three dimensional isentropic compressible Navier-Stokes equations,
Comm. Pure Appl. Math, 65(2012) 549-585.

\bibitem{lijing2}
X.D.Huang, J.Li,
 Serrin-type blowup criterion for viscous, compressible, and heat conducting Navier-Stokes and magnetohydrodynamic flows,
 Commun. Math. Phys, 324(2013) 147-171.

\bibitem{huang-li}
X.D.Huang, J.Li,
Global classical and weak solutions to the three-dimensional full compressible Navier-Stokes system with vacuum and large oscillations,
 Arch. Rational Mech. Anal, 227(2018) 995-1059.

 \bibitem{jiang}
S. Jiang,
Large-time behavior of solutions to the equations of a one-dimensional viscous polytropic ideal gas in unbounded domains,
Commun. Math. Phys,  200(1999) 181-193.

\bibitem{jiang2}
S.Jiang,
 Large-time behavior of solutions to the equations of a viscous polytropic ideal gas,
 Ann. Mat. Pura Appl, 175(1998) 253-275.


  \bibitem{kagei1}
Y.Kagei, T.Kobayashi,
On large time behavior of solutions to the compressible Navier-Stokes equations in the half space in $\mathbb R^3$, Arch. Ration. Mech. Anal, 165(2002) 89-159.

\bibitem{kagei2}
Y.Kagei, T.Kobayashi,
Asymptotic behavior of solutions of the compressible Navier-Stokes equations on the half space,
Arch. Ration. Mech. Anal, 177(2005) 231-330.

  \bibitem{kawashima}
S.Kawashima, T.Nishida,
Global solutions to the initial value problem for the equations of one-dimensional motion of viscous polytropic gases,
J. Math. Kyoto Univ, 21(1981) 825-837.

 \bibitem{kazhikhov}
 A.V.Kazhikhov, V.V.Shelukhin,
Unique global solution with respect to time of initial boundary value problems for one-dimensional equations of a viscous gas,
J. Appl. Math. Mech, 4(1977)  273-282.

\bibitem{kobayashi1}
T.Kobayashi,
Some estimates of solutions for the equations of motion of compressible
viscous fluid in an exterior domain in $\mathbb R^3$,
J.Differential Equations, 184(2002) 587-619.

\bibitem{kobayashi2}
T.Kobayashi, Y.Shibata,
Decay estimates of solutions for the equations of motion of compressible viscous and heat-conductive gases in an exterior domain of $\mathbb R^3$,
Comm. Math. Phys, 200(1999) 621-659.

\bibitem{li-zhang}
H.L.Li, T.Zhang,
Large time behavior of isentropic compressible Navier-Stokes system in $\R^3$,
Math. Methods Appl. Sci, 34(6) (2011) 670-682.


\bibitem{lijinkai1}
J.K.Li,
Global well-posedness of the one-dimensional compressible Navier-Stokese quations with constant heat conductivity and nonnegative density,
SIAM J. Math. Anal, 51(5) (2019) 3666-3693.

\bibitem{lijinkai2}
J.K.Li,
Global well-posedness of non-heat conductive compressible Navier-Stokese quations in 1D, Nonlinearity, 33(2020) 2181-2210.

\bibitem{lijinkai3}
J.K.Li, Z.P.Xin,
Entropy bounded solutions to the one-dimensional compressible Navier-Stokes equations with zero heat conduction and far field vacuum,
Adv. Math, 361(2020) 106923.

\bibitem{lijinkai4}
J.K.Li, Z. P.Xin,
Entropy-bounded solutions to the heat conductive compressible Navier-Stokes equations,
arXiv:2002.03372v1 [math.AP].

\bibitem{li-liang}
J.Li, Z.L.Liang,
Some uniform estimates and large-time behavior of solutions to one-dimensional compressible Navier-Stokes system in unbounded domains with large data,
Arch. Rational Mech. Anal, 220(2016) 1195-1208.

\bibitem{li-xin}
J.Li, Z.P.Xin,
Global well-posedness and large time asymptotic behavior of classical solutions to the compressible Navier-Stokes equations with vacuum,
Ann. PDE, 5 (2019) 37 pp.

\bibitem{loukas}
L. Grafakos,
Classical Fourier Analysis Volume 249,
10.1007/978-1-4939-1194-3(2014).

\bibitem{nishida2}
A.Matsumura, T.Nishida,
The initial value problem for the equations of motion of compressible viscous and heat-conductive fluids,
Proc. Japan Acad. Ser. A Math. Sci, 55(9) (1979) 337-342.

\bibitem{nishida1}
A.Matsumura, T.Nishida,
The initial value problem for the equations of motion of viscous and heat-conductive gases,
J. Math. Kyoto Univ, 20(1) (1980) 67-104.

\bibitem{nash}
J.Nash,
Le probl\`eme de Cauchy pour les \'equations diff\'erentielles d'un fluide g\'en\'eral,
Bulletin de la Soc. Math. de France, 90(1962) 487-497.

\bibitem{nirenberg}
L.Nirenberg,
On elliptic partial differential equations,
Ann. Scuola Norm. Sup. Pisa, 13(1959) 115-162.

\bibitem{ponce}
G.Ponce,
Global existence of small solution to a class of nonlinear evolution equations,
Nonlinear Anal, 9(1985) 339-418.

\bibitem{qin}
Y.M.Qin,
 Nonlinear Parabolic-Hyperbolic Coupled Systems and Their Attractors,
 Operator Theory, Advances and Applications, Vol. 184. Birkhauser, Basel, 2008.

 \bibitem{schonbek1}
M.E.Schonbek,
$L^2$ decay for weak solutions of the Navier-Stokes equations,
Arch. Rational Mech. Anal, 88(1985) 209-222.


\bibitem{schonbek2}
M.E.Schonbek,
Large time behaviour of solutions to the Navier-Stokes equations in $H^m$ spaces,
Comm. Partial Differential Equations, 20(1995) 103-117.

\bibitem{Schonbek-Wiegner}
M.E.Schonbek, M.Wiegner,
On the decay of higher-order norms of the solutions of Navier-Stokes equations,
Proc. R. Soc. Edinb. Sect. A, 126(3) (1996) 677-685.

\bibitem{stein}
E.M.Stein,
Singular integrals and differentiability properties of functions, Princeton University Press,1970.

\bibitem{sun-wang}
Y.Z.Sun, C.Wang, Z.F.Zhang,
A Beale-Kato-Majda criterion for three dimensional compressible viscous heat- conductive flows,
Arch. Ration. Mech. Anal, 201(2011) 727-742.

\bibitem{tan-tong}
Z.Tan, L.L.Tong,
Decay estimates of solutions to the compressible Navier-Stokes-Maxwell system in $\mathbb R^3$,
Commun. Math. Sci, 14(5) (2016) 1189-1212.

\bibitem{tan-wang}
Z.Tan, H.Q.Wang,
Optimal decay rates of the compressible magnetohydrodynamic equations,
Nonlinear Anal. Real World Appl, 14(1) (2013) 188-201.

\bibitem{tan-wang-tong}
Z.Tan, Y.Wang, L.L.Tong,
Decay estimates of solutions to the bipolar non-isentropic compressible Euler-Maxwell system,
Nonlinearity, 30(10) (2017) 3743-3772.

\bibitem{tan-wang-wang}
Z.Tan, Y.J.Wang, Y.Wang,
Decay estimates of solutions to the compressible Euler-Maxwell system in $\mathbb R^3$,
J. Differential Equations, 257(8) (2014) 2846-2873.

\bibitem{tan-wu-zhou}
Z.Tan, W.P.Wu, J.F.Zhou,
Global existence and decay estimate of solutions to magneto-micropolar fluid equations,
J. Differential Equations, 266(7) (2019) 4137-4169.

\bibitem{tan-zhou}
Z.Tan, J.F.Zhou,
Decay estimate of solutions to the coupled chemotaxis-fluid equations in $\mathbb R^3$,
Nonlinear Anal. Real World Appl, 43(2018) 323-347.

\bibitem{wang}
Y.J.Wang,
Decay of the Navier-Stokes-Poisson equations,
J. Differential Equations, 253(2012) 273-297.

\bibitem{wen-zhu2}
H.Y.Wen, C.J.Zhu,
Blow-up criterions of strong solutions to 3D compressible Navier-Stokes equations with vacuum,
Adv. Math, 248(2013) 534-572.

\bibitem{wen-zhu}
H.Y.Wen, C.J.Zhu,
Global solutions to the three-dimensional full compressible Navier-Stokes equations
with vacuum at infinity in some classes of large data,
SIAM J. Math. Anal, 49(2017) 162-221.


\end{thebibliography}
\end{document}